\newcommand{\numl}{\operatorname{\#}}
\newcommand{\FF}{\mathbb{F}}
\newcommand{\BB}{\mathbb{B}}
\newcommand{\PP}{\mathbb{P}}
\newcommand{\ZZ}{\mathbb{Z}}
\newcommand{\GG}{\mathbb{G}}
\newcommand{\UUU}{\mathbb{U}}
\newcommand{\KK}{\Bbbk}
\newcommand{\DD}{\mathrm{D}}
\newcommand{\mumu}{\boldsymbol{\mu}}
\newcommand{\Bl}{\operatorname{Bl}}
\newcommand{\Aut}{\operatorname{Aut}}
\newcommand{\GL}{\operatorname{GL}}
\newcommand{\PGL}{\operatorname{PGL}}
\newcommand{\Bir}{\operatorname{Bir}}
\newcommand{\Type}{\operatorname{Type}}
\newcommand{\ind}{\tau}
\newcommand{\Ga}{\mathbb{G}_{\mathrm{a}}}
\newcommand{\Gm}{\mathbb{G}_{\mathrm{m}}}
\newcommand{\UU}{\mathbb{U}}
\newcommand{\type}[1]{\mathrm{#1}}
\newtheorem{theorem}[subsection]{Theorem}
\newtheorem{lemma}[subsection]{Lemma}
\newtheorem{proposition}[subsection]{Proposition}
\newtheorem{claim}[subsection]{Claim}
\theoremstyle{definition}
\newtheorem{definition}[subsection]{Definition}
\newtheorem{remark}[subsection]{Remark}
\newtheorem{notation}[subsection]{Notation}
\newcounter{No}
\renewcommand{\theNo}{{{\rm\arabic{No}$^o$}}}
\def\no{\refstepcounter{No}{\theNo}}
\title{Automorphisms of Du Val del Pezzo surfaces}
\author{Nikita Virin}
\address{Lomonosov Moscow State University, Moscow, Russia}
\email{virinnikita@gmail.com}
\begin{document}

\begin{abstract}
    We present a description for the automorphism groups of Du Val del Pezzo surfaces whose automorphism groups are infinite.
\end{abstract}

\keywords{
    Du Val singularity, del Pezzo surface, automorphism.
}

\subjclass{Primary 14J50; Secondary 14J17, 14J26}

\maketitle

\section{Introduction}

We assume that all varieties are projective and defined over an algebraically closed field $\KK$ of characteristics $0$.

A \textit{Du Val del Pezzo surface} is a normal projective surface with at worst Du Val singularities and ample anticanonical divisor. Automorphism groups of smooth del Pezzo surfaces are well studied (see, for example, \cite{dol}). In \cite{che} and \cite{mar}, Du Val del Pezzo surfaces with infinite automorphism groups have been classified and connected components $\Aut^0(X)\subset \Aut(X)$ of their automorphism groups have been described. In this paper we describe \textit{groups of connected components} $\Aut(X)/\Aut^0(X)$ for all such Du Val del Pezzo surfaces. Moreover, it is shown that in most cases the group $\Aut(X)$ is a semidirect product of its connected component and a finite group. More precisely, we prove the following theorem.

\begin{theorem}
    Let $X$ be a Du Val del Pezzo surface with infinite automorphism group $\Aut(X)$. Then $\Aut(X)$ is described by the column $\Aut(X)$ in the table from Appendix \textup{\ref{section:tables}}. Moreover, except for the cases \ref{d=1:2D4}, \ref{d=2:2A3}, \ref{d=3:2A2} and \ref{d=4:2A1-8lines}, there is a decomposition
    \[
        \Aut(X) = \Aut^0(X)\rtimes (\Aut(X)/\Aut^0(X)).
    \]
\end{theorem}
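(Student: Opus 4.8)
The plan is to replace $X$ by its minimal resolution $\pi\colon\widetilde X\to X$, which is a weak del Pezzo surface of the same degree $d$. Since the minimal resolution is canonical and the $(-2)$-curves lying over the Du Val points of $X$ form a negative-definite, hence contractible, configuration, $\Aut(X)$ lifts isomorphically to $\Aut(\widetilde X)$ and $\Aut^0(X)\cong\Aut^0(\widetilde X)$. Now $\Aut(\widetilde X)$ acts on $\Pic(\widetilde X)$ preserving the intersection form, the class $-K_{\widetilde X}$ and the nef cone, hence preserving the set of classes of $(-1)$- and $(-2)$-curves; this gives a homomorphism $\rho\colon\Aut(\widetilde X)\to\mathrm{O}(\Pic(\widetilde X))$ whose image $\bar G$ lies in the (finite) Weyl group $W=W(R_d)$ of the root lattice $K_{\widetilde X}^{\perp}$, and in fact in its subgroup stabilising the weighted dual graph of the negative curves (or, in degrees $\ge 8$, the set of ruling classes). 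Being connected, $\Aut^0(X)$ acts trivially on the discrete group $\Pic(\widetilde X)$, so $\Aut^0(X)\subseteq\ker\rho$; using the explicit descriptions in \cite{che} and \cite{mar} one verifies the reverse inclusion case by case, whence $\Aut(X)/\Aut^0(X)\cong\bar G$ is finite.

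For the first assertion I would then go through the surfaces of \cite{che} and \cite{mar} one by one and determine $\bar G$ by a sandwich: the upper bound is the combinatorial symmetry group just described (the Du Val type fixes the $(-2)$-part and the $(-1)$-curves are tabulated there), and a matching lower bound is obtained by writing down automorphisms. Here one uses that every such $X$ carries a distinguished structure — a toric structure, an $\Aut^0(X)$-equivariant conic bundle $X\to\PP^1$, or a faithful action of $\GL_2$ or $\PGL_2$ — with respect to which the remaining symmetries are manifest: symmetries of the fan in the toric case, permutations of the degenerate fibres in the conic-bundle case, the normaliser of a maximal torus or of the acting subgroup in the homogeneous case. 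This fills in the column $\Aut(X)$ of the table, and outside the four exceptional cases it even produces a finite subgroup $H\subseteq\Aut(\widetilde X)$ with $\rho|_H\colon H\xrightarrow{\ \sim\ }\bar G$; such an $H$ is a complement to $\Aut^0(X)$ and gives the asserted semidirect-product decomposition. Checking case by case that the chosen automorphisms really do close up into such a subgroup $H$ is routine, but it is where the bulk of the work goes.

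The genuinely hard point is the negative one: for \ref{d=1:2D4}, \ref{d=2:2A3}, \ref{d=3:2A2} and \ref{d=4:2A1-8lines} there is no complement. In each of these, $\Aut(X)/\Aut^0(X)$ has a distinguished element $g$ of order two interchanging the two Du Val points of equal type, and the claim is that $g$ has no lift of order two. Fix a lift $\sigma\in\Aut(\widetilde X)$. Because $g$ swaps the two Du Val points, and with them the two one-parameter subgroups of $\Aut^0(X)$ attached to them, $\sigma$ acts by inversion on a suitable one-dimensional quotient $q\colon\Aut^0(X)\twoheadrightarrow A$, with $A\cong\Ga$ or $\Gm$; then for every $t\in\Aut^0(X)$ we have $(\sigma t)^2=\sigma^2\cdot(\sigma^{-1}t\sigma)\cdot t$, and applying $q$ (writing $A$ multiplicatively) gives $q\bigl((\sigma t)^2\bigr)=q(\sigma^2)\,q(t)^{-1}\,q(t)=q(\sigma^2)$, so the element $q(\sigma^2)\in A$ is independent of the lift of $g$. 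A direct computation with the explicit model — on the degenerate conic bundle, or on the equations cutting out the anticanonical image of $X$ — shows $q(\sigma^2)$ is nontrivial; hence $g$ admits no lift of finite order, the extension $1\to\Aut^0(X)\to\Aut(X)\to\Aut(X)/\Aut^0(X)\to 1$ is non-split, and the mechanism is exactly that of the non-split extension $1\to\Gm\to N_{\mathrm{SL}_2}(T)\to\ZZ/2\to 1$, in which the Weyl element squares to $-\mathrm I$. The main obstacle is thus organisational: identifying, in each of the many cases, the structure that renders the complement — or, in these four, its obstruction — visible, and carrying the computations through in the coordinates of \cite{che} and \cite{mar}.
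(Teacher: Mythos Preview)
Your positive programme—pass to the minimal resolution, let $\Aut(X)$ act on the configuration of negative curves, bound the component group by the combinatorial symmetry group, and exhibit explicit sections case by case—is essentially what the paper does. One difference worth flagging: you propose to verify $\ker\rho=\Aut^0(X)$ ``case by case using the explicit descriptions,'' whereas the paper gives a cleaner argument (its Lemmas~\ref{l1} and~\ref{l2}) via the faithfulness of the tangent-space representation of a reductive group at a fixed point. This also explains why three types ($\type{A}_7$, $\type{A}_5$, and $\type{A}_3$ with four lines) must be treated separately: there the dual graph lacks the incidences needed for those lemmas, and indeed the kernel of the action on $\Gamma(X)$ is strictly larger than $\Aut^0(X)$ in those cases. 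Your Picard-lattice formulation does not obviously sidestep this.

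The genuine gap is in your ``negative'' part. The theorem does \emph{not} assert that the four excluded families fail to split; it merely refrains from asserting splitting for them. In fact Proposition~\ref{c5} shows that for a generic parameter $\lambda$ each of these surfaces \emph{does} satisfy $\Aut(X)\cong\Aut^0(X)\rtimes(\Aut(X)/\Aut^0(X))$. Your obstruction argument is therefore wrong as stated: the order-two element $g$ swapping the two singular points \emph{always} lifts to an involution—the paper writes one down explicitly in every case, e.g.\ $\sigma_1\colon(y_1:y_1':y_2:y_3)\mapsto(y_1':y_1:y_2:y_3)$ for type $2\type{D}_4$—so your invariant $q(\sigma^2)$ is trivially $1$. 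What actually happens is that for special values of $\lambda$ the component group $\Aut(X)/\Aut^0(X)$ jumps, and it is only this enlarged extension that can fail to split; the mechanism is not the $N_{\mathrm{SL}_2}(T)$ phenomenon you invoke, and the paper does not attempt to analyse it.
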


Note that the description of $\Aut(X)$ for Du Val del Pezzo surfaces of degree $3$ that have no parameters was obtained in \cite{sak}.

The author is grateful to Yuri Prokhorov for stating the problem, useful discussions and constant attention to this work. Also the author would like to thank Andrey Trepalin and Constantin Shramov for useful discussions.

\section{Preliminaries}

We use the following notation.

\begin{itemize}
\item
$\Ga$ is a one-dimensional unipotent additive group.
\item
$\Gm$ is a one-dimensional algebraic torus.
\item
$\BB_n$ is a Borel subgroup of $\PGL_n(\KK)$.
\item
$\UUU_n$ is a maximal unipotent subgroup of $\PGL_n(\KK)$.
\item
$\Ga\rtimes_{(n)} \Gm$ is a semidirect product of $\Ga$ and $\Gm$ such that $\Gm$ acts on $\Ga$ as $a \mapsto t^n a$.
\item
$G_1\rtimes_\varphi G_2$ is a semidirect product of $G_1$ and $G_2$ such that $G_2$ acts on $G_1$ as $g_1 \mapsto \varphi_{g_2}(g_1)$, where $\varphi: G_2\to \Aut(G_1)$ is a group homomorphism and $g_1\in G_1$, $g_2\in G_2$. We denote $\varphi_{g_2}(g_1)$ by $g_2\cdot_\varphi g_1.$ We also denote $\varphi_{g_2}(g_1)$ by $g_2\cdot g_1,$ where $\varphi$ is a conjugation action.
\end{itemize}

\begin{notation}
    Let $G$ be an algebraic group, let $g\in G$. Then $g_s$ denotes the semisimple part of $g$, and $g_u$ denotes the unipotent part of $g$. Notice that $g=g_ug_s$.
\end{notation}

\begin{definition}
    A quasitorus is an algebraic group $T$ isomorphic to $\Gm^n\times A$, where $A$ is a finite abelian group.
\end{definition}

The following lemma is a basic fact about semisimple elements of algebraic groups.

\begin{lemma}[{\cite[\S 16]{hum}}]
    \label{l:t}
    Let $G$ be a linear algebraic group. Let $g\in G$ be a semisimple element. Then there is a quasitorus $T \subset G$ such that $g\in T$.
\end{lemma}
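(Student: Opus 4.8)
The plan is to realize the required quasitorus as the Zariski closure of the cyclic group generated by $g$, and to show that this closure is diagonalizable. First I would fix a faithful representation $\rho\colon G\hookrightarrow \GL_n(\KK)$, which exists because $G$ is a linear algebraic group, and identify $G$ with a closed subgroup of $\GL_n(\KK)$. Since the Jordan decomposition is compatible with closed embeddings, the image $\rho(g)$ is again semisimple, i.e.\ diagonalizable as a matrix; after a change of basis I may assume that $g$ itself is a diagonal matrix.

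Next I would set $T \colonequals \overline{\langle g\rangle}$, the Zariski closure in $G$ of the (abstract) cyclic group generated by $g$. The closure of an abstract subgroup is again a closed algebraic subgroup, and the closure of a commutative subgroup is commutative, so $T$ is a commutative closed subgroup of $G$ containing $g$. The key observation is that every power $g^k$ is a diagonal matrix, that the set $D_n\subset \GL_n(\KK)$ of diagonal matrices is Zariski closed, and that $D_n\cong \Gm^n$; hence $T\subseteq D_n$, so $T$ is a closed subgroup of a torus. (Here one should note that the closure taken inside $G$ agrees with the closure taken inside $\GL_n(\KK)$, since $G$ is closed in $\GL_n(\KK)$.)

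It then remains to identify the isomorphism type of $T$, and this is the step carrying the real content. I would invoke the structure theory of diagonalizable groups: a closed subgroup of a torus is diagonalizable, and every diagonalizable algebraic group is isomorphic to $\Gm^m\times A$ for some finite abelian group $A$, its character group being a finitely generated abelian group whose free part yields the torus factor $\Gm^m$ and whose torsion yields $A$. Thus $T$ is a quasitorus in the sense of the definition above, and by construction $g\in T$. The only points requiring care are that semisimplicity is an intrinsic, representation-independent property (so that diagonalizing $g$ as a matrix is legitimate) and that closed subgroups of $\Gm^n$ are exactly the quasitori; both are standard facts from the theory of diagonalizable groups, so once they are cited no genuine obstacle remains.
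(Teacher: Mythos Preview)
Your argument is correct: taking $T=\overline{\langle g\rangle}$, embedding $G$ into $\GL_n(\KK)$, diagonalizing $g$, and then appealing to the structure theory of diagonalizable groups is the standard route to this fact. Note, however, that the paper does not give its own proof of this lemma; it is simply quoted from \cite[\S 16]{hum} as a known result, so there is nothing to compare against beyond observing that your sketch matches the usual textbook argument.
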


This lemma helps us to prove Lemma \ref{l1} and Lemma \ref{l2}.

\begin{proposition}[see e.g. {\cite[Lemma 4]{pop}}]
    \label{cl:r}
    Let $X$ be an irreducible variety whose automorphism group $\Aut(X)$ is a linear algebraic. Let $G$ be a reductive subgroup of $\Aut(X)$, and let $P \in X$ be a fixed point of $G$. Then the natural representation $$G \longrightarrow \GL(T_{P,X})$$ is faithful.
\end{proposition}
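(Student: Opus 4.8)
The plan is to prove that the kernel $N$ of the representation $\rho\colon G\to \GL(T_{P,X})$ is trivial, playing off the reductivity of $G$ against the nilpotence of the $\mathfrak{m}_P$-adic filtration on the local ring at $P$. First I would observe that $N$ is a closed normal subgroup of the linear algebraic group $G$, and that it is again reductive: the unipotent radical $R_u(N)$ is characteristic in $N$, hence normal in $G$, hence contained in $R_u(G)=\{e\}$.

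Since $P$ is fixed by $G$, the group $N$ acts on $\mathcal{O}_{P,X}$ preserving $\mathfrak{m}_P$ and all its powers, and on each finite-dimensional quotient $\mathcal{O}_{P,X}/\mathfrak{m}_P^{k}$ this action is a rational representation. By hypothesis $N$ acts trivially on $T_{P,X}$, equivalently on the cotangent space $\mathfrak{m}_P/\mathfrak{m}_P^{2}$. The key step is to upgrade this to all graded pieces: the multiplication maps $\operatorname{Sym}^{j}\!\big(\mathfrak{m}_P/\mathfrak{m}_P^{2}\big)\twoheadrightarrow \mathfrak{m}_P^{j}/\mathfrak{m}_P^{j+1}$ are $N$-equivariant and surjective, so $N$ acts trivially on every $\mathfrak{m}_P^{j}/\mathfrak{m}_P^{j+1}$. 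Hence $N$ acts on $\mathcal{O}_{P,X}/\mathfrak{m}_P^{k}$ by unipotent operators (they induce the identity on the associated graded), so the image of $N$ in $\GL\big(\mathcal{O}_{P,X}/\mathfrak{m}_P^{k}\big)$ is a unipotent algebraic group; being also a quotient of the reductive group $N$, it is reductive, and a group that is simultaneously unipotent and reductive is trivial. Therefore $N$ acts trivially on $\mathcal{O}_{P,X}/\mathfrak{m}_P^{k}$ for every $k$.

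To finish, take $n\in N$ and $f\in\mathcal{O}_{P,X}$; then $n^{*}f-f\in\bigcap_{k\ge 1}\mathfrak{m}_P^{k}=0$ by Krull's intersection theorem, so $n$ induces the identity on $\mathcal{O}_{P,X}$ and hence on its fraction field $\KK(X)$. Since, for an irreducible variety $X$, an element of $\Aut(X)$ is determined by the induced automorphism of $\KK(X)$, we conclude $n=\id_X$, i.e. $N=\{e\}$ and $\rho$ is faithful.

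I expect the only delicate points to be bookkeeping in the middle paragraph: checking that the induced $N$-actions on the jet algebras $\mathcal{O}_{P,X}/\mathfrak{m}_P^{k}$ are algebraic (so that the word ``image'' refers to an honest algebraic subgroup) and that triviality on the cotangent space really does force triviality on the full associated graded algebra. Everything else is the standard ``reductive $\cap$ unipotent $=$ trivial'' mechanism together with Krull's theorem. It is worth stressing that reductivity is indispensable: for $G=\Ga$ acting on $\mathbb{A}^2$ by $t\cdot(x,y)=(x,\,y+tx^2)$, every point of the line $\{x=0\}$ is fixed with trivial action on the tangent space, yet $G$ acts nontrivially.
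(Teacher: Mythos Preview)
The paper does not give its own proof of this proposition; it is stated with a reference to \cite[Lemma~4]{pop} and used as a black box. Your argument is correct and is essentially the standard proof one finds in the literature (and presumably in Popov's lemma): trivial action on the cotangent space forces trivial action on each graded piece $\mathfrak m_P^{j}/\mathfrak m_P^{j+1}$, hence a unipotent image in $\GL(\mathcal O_{P,X}/\mathfrak m_P^{k})$, which for a reductive source must be trivial; Krull's intersection theorem then passes this to $\mathcal O_{P,X}$ and hence to $\KK(X)$. The two points you flag as delicate are the right ones, and both are routine in characteristic~$0$: the $G$-action on the jet quotients is rational because it is obtained from the rational action on a $G$-stable finite-dimensional subspace of regular functions on an affine neighbourhood of $P$, and the passage from the cotangent space to the associated graded is exactly the surjectivity of $\operatorname{Sym}^{j}(\mathfrak m_P/\mathfrak m_P^{2})\twoheadrightarrow \mathfrak m_P^{j}/\mathfrak m_P^{j+1}$ that you invoke. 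Your $\Ga$-example is a good illustration of why reductivity cannot be dropped.
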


    The following proposition is a particular case of a well-known fact.
    
\begin{proposition}[{see, for example, \cite[Corollary 3.1.3]{kuz}}]
    Let $X$ be a Du Val del Pezzo surface. Then $\Aut(X)$ is a linear algebraic group.
\end{proposition}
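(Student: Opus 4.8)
The plan is to exhibit $\Aut(X)$ as a closed subgroup of a projective linear group, using an equivariant projective embedding of $X$ furnished by an anticanonical polarization. Since $X$ has Du Val singularities, it is Gorenstein and canonical, so $K_X$ is a Cartier divisor and $\OOO_X(-K_X)$ is a genuine ample line bundle. Because $-K_X$ is ample, some multiple of it is very ample (and in fact one may take $-3K_X$ by the standard theory of Du Val del Pezzo surfaces, or $-K_X$ itself when $\deg X\ge 3$). Fix an $m\ge 1$ with $-mK_X$ very ample, put $N=h^0\big(X,\OOO_X(-mK_X)\big)-1$, and let $\varphi\colon X\hookrightarrow\PP^N$ be the corresponding closed embedding.

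First I would check that the $\Aut(X)$-action is linearized by $\varphi$. The canonical class $K_X$ — and hence the line bundle $\OOO_X(-mK_X)$ — is preserved by every $g\in\Aut(X)$, so $g$ acts on the vector space $H^0\big(X,\OOO_X(-mK_X)\big)$ up to a scalar and thus induces an element of $\PGL_{N+1}(\KK)$ compatible with $\varphi$. This yields a homomorphism $\rho\colon\Aut(X)\to\PGL_{N+1}(\KK)$. It is injective, because $\varphi$ is a closed embedding: if $\rho(g)$ acts trivially on $\PP^N$, then $g$ acts trivially on $\varphi(X)=X$. Conversely, every $T\in\PGL_{N+1}(\KK)$ with $T\big(\varphi(X)\big)=\varphi(X)$ restricts to an automorphism of $X$, so $\rho$ identifies $\Aut(X)$ with the stabilizer $G:=\operatorname{Stab}_{\PGL_{N+1}(\KK)}\big(\varphi(X)\big)$.

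Next I would use that $G$ is Zariski-closed in $\PGL_{N+1}$, being cut out by the condition of preserving the homogeneous ideal of $\varphi(X)$ degree by degree; hence $G$ is a linear algebraic group, and the restriction to $G$ of the $\PGL_{N+1}$-action on $\PP^N$ is an algebraic action $G\times X\to X$. Combining this with the bijection $\Aut(X)\cong G$ from the previous step equips $\Aut(X)$ with the structure of a linear algebraic group acting algebraically on $X$.

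The point that requires the most care — and the only substantive one, the rest being standard — is the compatibility of this structure with the intrinsic one: that the bijection $\Aut(X)\cong G$ is an isomorphism of algebraic groups (equivalently, of group schemes), not merely of abstract groups. This is handled through the universal property of the automorphism group scheme $\underline{\Aut}(X)$, which is locally of finite type in characteristic $0$ and reduced (Matsumura–Oort): the algebraic $G$-action on $X$ corresponds to a morphism $G\to\underline{\Aut}(X)$ that is inverse to $\rho$ on points, and since $G$ is of finite type and both sides are reduced, this morphism is an isomorphism; in particular $\underline{\Aut}(X)$ is of finite type and affine. One may alternatively bypass this by simply declaring the algebraic-group structure on $\Aut(X)$ to be the one transported from $G$, which is what is used throughout the paper.
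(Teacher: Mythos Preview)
Your argument is correct and is the standard one: the pluri-anticanonical embedding is $\Aut(X)$-equivariant, so $\Aut(X)$ is identified with the projective stabilizer of $\varphi(X)$, a closed subgroup of $\PGL_{N+1}(\KK)$. The care you take with the scheme-theoretic compatibility (via Matsumura--Oort) is appropriate, though for the purposes of this paper the transported structure suffices.

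Note, however, that the paper gives no proof of this proposition at all: it is stated with a reference to \cite[Corollary~3.1.3]{kuz} and simply quoted as a known fact. So there is no ``paper's own proof'' to compare against. Your write-up is essentially the argument one finds behind such citations; it is neither more nor less than what is expected, and nothing in it conflicts with the paper's usage.
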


\begin{lemma}[{\cite[Lemma 8.1.11]{dol}}]
\label{lg1}
Let $X$ be a Du Val del Pezzo surface and let $\widetilde{X}\to X$ be its minimal resolution. Then any irreducible reduced curve on $\widetilde{X}$ with negative self-intersection is either a $(-1)$-curve or $(-2)$-curve.
\end{lemma}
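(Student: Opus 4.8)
The plan is to deduce the statement from the fact that the minimal resolution of a Du Val del Pezzo surface is a \emph{weak} del Pezzo surface, combined with the adjunction formula. First I would record that, since $X$ has at worst Du Val singularities, the minimal resolution $\pi\colon \widetilde{X}\to X$ is crepant, i.e.\ $K_{\widetilde{X}}=\pi^{*}K_{X}$; this is the classical characterization of Du Val (canonical, index one) surface singularities, whose exceptional curves are exactly the $(-2)$-curves. Consequently $-K_{\widetilde{X}}=\pi^{*}(-K_{X})$ is the pullback of an ample divisor under a proper birational morphism, hence nef (and big).

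Next, let $C\subset\widetilde{X}$ be an irreducible reduced curve with $C^{2}<0$. Applying adjunction on the smooth surface $\widetilde{X}$ gives
\[
    2p_a(C)-2=C^{2}+K_{\widetilde{X}}\cdot C.
\]
Since $-K_{\widetilde{X}}$ is nef and $C$ is effective, $K_{\widetilde{X}}\cdot C\le 0$; and $p_a(C)\ge 0$ because $C$ is reduced and irreducible. Hence
\[
    C^{2}=2p_a(C)-2-K_{\widetilde{X}}\cdot C\ge -2,
\]
so the only possibilities compatible with $C^{2}<0$ are $C^{2}=-1$ and $C^{2}=-2$.

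Finally I would check that in each of these two cases $C$ has the asserted type. If $C^{2}=-1$, then $0\le -K_{\widetilde{X}}\cdot C=1-2p_a(C)$ forces $p_a(C)=0$ and $-K_{\widetilde{X}}\cdot C=1$; since a reduced irreducible curve of arithmetic genus $0$ is a smooth rational curve, $C$ is a $(-1)$-curve. If $C^{2}=-2$, then $0\le -K_{\widetilde{X}}\cdot C=-2p_a(C)$ forces $p_a(C)=0$ and $-K_{\widetilde{X}}\cdot C=0$, so $C$ is a smooth rational curve with $C^{2}=-2$, i.e.\ a $(-2)$-curve.

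There is essentially no serious obstacle in this argument; the only ingredient that is not a one-line computation is the crepantness of the minimal resolution of Du Val singularities (equivalently, that the resolution only contracts $(-2)$-curves), which is part of the classical theory of rational double points and may alternatively be taken as the definition of ``Du Val''. Everything else reduces to adjunction together with the nefness of $-K_{\widetilde{X}}$.
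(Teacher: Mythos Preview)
Your argument is correct and is in fact the standard proof of this fact. Note, however, that the paper does not give its own proof of this lemma at all: it is stated with a citation to \cite[Lemma~8.1.11]{dol} and used as a black box. So there is no ``paper's proof'' to compare against; your write-up is essentially a reconstruction of the argument in the cited reference (crepancy of the minimal resolution of Du Val singularities, giving $-K_{\widetilde{X}}$ nef, followed by adjunction to force $C^2\in\{-1,-2\}$ and $p_a(C)=0$).
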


\begin{lemma}[{\cite[\S 8]{dol}}]
    \label{lg2}
    Let $X$ be a Du Val del Pezzo surface and let $\widetilde{X}\to X$ be its minimal resolution. Then $\widetilde{X}$ has a finite number of irreducible reduced curves with negative self-intersection.
\end{lemma}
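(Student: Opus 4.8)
The plan is to reduce the statement to a finiteness property of the Picard lattice of $\widetilde{X}$. First I would record that the minimal resolution $\pi\colon \widetilde{X}\to X$ is crepant: Du Val singularities are canonical of discrepancy zero, so $K_{\widetilde{X}} = \pi^{*}K_X$ and hence $-K_{\widetilde{X}} = \pi^{*}(-K_X)$. Since $-K_X$ is ample and $\pi$ is birational, $-K_{\widetilde{X}}$ is nef and big; in particular $(-K_{\widetilde{X}})^2 = K_X^2 > 0$. As $\widetilde{X}$ is a smooth projective (rational) surface, $\Pic(\widetilde{X}) = \operatorname{NS}(\widetilde{X})$ is a free abelian group of finite rank $\rho$ carrying the intersection form, which is nondegenerate of signature $(1,\rho-1)$ by the Hodge Index Theorem.

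Next I would bound the numerical invariants of a negative curve. Let $C\subset \widetilde{X}$ be an irreducible reduced curve with $C^2 < 0$. By Lemma \ref{lg1}, $C$ is a smooth rational curve with $C^2\in\{-1,-2\}$, so adjunction $-2 = C^2 + C\cdot K_{\widetilde{X}}$ gives $C\cdot(-K_{\widetilde{X}})\in\{0,1\}$. Thus every negative curve has both $C^2$ and $C\cdot(-K_{\widetilde{X}})$ lying in a fixed finite set.

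The core step is a lattice-theoretic finiteness argument. Set $H = -K_{\widetilde{X}}$, so $H^2 > 0$; by the Hodge Index Theorem the orthogonal complement $H^{\perp}\subset \Pic(\widetilde{X})\otimes\mathbb{Q}$ is negative definite. Writing a class $v\in \Pic(\widetilde{X})$ as $v = \tfrac{v\cdot H}{H^2}H + w$ with $w\in H^{\perp}$, the constraint $v\cdot H\in\{0,1\}$ pins the first summand to finitely many values, while $w^2 = v^2 - \tfrac{(v\cdot H)^2}{H^2}$ is bounded below since $v^2\ge -2$. Because $H^{\perp}$ is negative definite and $\Pic(\widetilde{X})$ is a lattice, for each fixed value of $v\cdot H$ the vectors $w$ range over a translate of the negative definite lattice $\Pic(\widetilde{X})\cap H^{\perp}$ subject to a lower bound on $w^2$, hence form a finite set. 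Therefore only finitely many classes $v$ satisfy $v^2\in\{-1,-2\}$ and $v\cdot H\in\{0,1\}$, so the classes of negative curves take only finitely many values in $\Pic(\widetilde{X})$.

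Finally I would pass from finiteness of classes to finiteness of curves. If $C\ne C'$ are distinct irreducible reduced curves, then $C\cdot C'\ge 0$; were $[C]=[C']$, we would get $C^2 = C\cdot C'\ge 0$, contradicting $C^2 < 0$. Hence distinct negative curves have distinct classes, and finiteness of classes forces finiteness of the curves themselves. I expect the main obstacle to be the lattice-finiteness step: one must genuinely use $H^2>0$ together with the Hodge Index Theorem to split off the negative definite complement $H^{\perp}$ and conclude that bounded square together with bounded $H$-degree leaves only finitely many integral classes — without the big-and-nef property of $-K_{\widetilde{X}}$ this finiteness can fail.
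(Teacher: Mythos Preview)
Your argument is correct. The paper itself does not supply a proof of this lemma; it simply cites \cite[\S 8]{dol} and moves on, so there is no in-paper proof to compare against line by line.

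That said, your route is the standard abstract version of what Dolgachev does, with a slightly different emphasis. Dolgachev's treatment is more explicit: he realizes $\widetilde{X}$ as a blow-up of $\PP^2$ in $9-K_X^2$ points, identifies $\Pic(\widetilde{X})$ with the odd unimodular lattice $\mathrm{I}^{1,n}$, and then observes that $(-2)$-curves lie among the roots of the negative definite lattice $K_{\widetilde{X}}^{\perp}$ (a root system of type $E_n$, $D_n$, or $A_n$), while $(-1)$-curves lie among the exceptional vectors; both sets are manifestly finite. Your version replaces this concrete identification by the Hodge Index Theorem plus the general fact that in a negative definite lattice (or a coset thereof) there are only finitely many vectors of bounded norm. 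This is a genuine, self-contained proof and arguably cleaner, since it avoids choosing a blow-down to $\PP^2$. The explicit approach in \cite{dol} buys more: it gives the exact count and combinatorial structure of the negative curves via root systems, which is what the present paper actually uses when it appeals to the dual graphs $\Gamma(X)$.

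One small remark on presentation: in your decomposition $v=\tfrac{v\cdot H}{H^2}H+w$, the vector $w$ need not lie in the integral lattice $H^{\perp}\cap\Pic(\widetilde{X})$ but only in a fixed coset of it inside $H^{\perp}\otimes\QQ$. You implicitly handle this (``a translate of the negative definite lattice''), but it is worth saying explicitly that a coset of a full-rank lattice in a negative definite real quadratic space still meets any norm-bounded region in finitely many points.
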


Now we give the following definition.

\begin{definition}
    A \textit{dual graph} of a Du Val del Pezzo surface $X$ is the graph whose vertices are the curves with negative self-intersection numbers on the minimal resolution of singularities $\widetilde{X}$. The number of edges between two distinct vertices $E_i$ and $E_j$ equals $E_i \cdot E_j$. We denote a $(-1)$-curve by \begin{tikzpicture}
        \node[circle,fill=black] (1) at (1,0) {};
    \end{tikzpicture}, we denote a $(-2)$-curve by \begin{tikzpicture}
        \node[circle,draw=black] (1) at (1,0) {};
    \end{tikzpicture}, and we denote the dual graph by $\Gamma(X)$.
\end{definition}

Let $X$ be a Du Val del Pezzo surface. Clearly, $\Aut(X)$ acts on $\Gamma(X)$. In the next section, we discuss some properties of this action.

\section{Actions of automorphism groups on dual graphs}

In the sequel, $X$ denotes a Du Val del Pezzo surface and $\mu:\widetilde{X}\to X$ denotes its minimal resolution.

\begin{lemma}
    \label{l1}
    Let $G\subset \Aut(X)$ be an algebraic subgroup. Let $E$, $E_1$, $E_2$, $E_3$ be distinct $G$-invariant smooth rational curves on $\widetilde{X}$ such that $E\cdot E_i=1$ and $E_i\cdot E_j=0$ for $i\neq j$. In addition, suppose that $G$ contains a one-dimensional torus $T$. Then $G$ is connected.
\end{lemma}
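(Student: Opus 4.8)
The plan is to use the three curves through $E$ to rigidify the action. Recall first that, the minimal resolution being canonical, $G$ acts on $\widetilde{X}$ compatibly with $\mu$, and $\Aut(\widetilde{X})$ is canonically identified with $\Aut(X)$. Since $\widetilde{X}$ is smooth and $E\cdot E_i=1$, each $E_i$ meets $E$ transversally in a single point $P_i$, and since $E_i\cdot E_j=0$ for $i\neq j$ the points $P_1,P_2,P_3$ are pairwise distinct. As $E$ and each $E_i$ are $G$-invariant, each $P_i$ is a fixed point of the $G$-action on $E\cong\PP^1$; an automorphism of $\PP^1$ fixing three distinct points is trivial, so $G$ acts trivially on $E$. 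Set $P:=P_1$, a smooth point of $\widetilde{X}$ fixed by $G$.

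Next I would examine the differential representation $\rho\colon G\to\GL(T_{P}\widetilde{X})$. The tangent lines $\ell:=T_{P}E$ and $\ell_1:=T_{P}E_1$ are distinct by transversality, both are $G$-invariant, and $G$ acts trivially on $\ell$ because it acts trivially on $E$. Hence in a basis adapted to $T_{P}\widetilde{X}=\ell\oplus\ell_1$ the image $\rho(G)$ is contained in $\{\operatorname{diag}(1,\lambda):\lambda\in\Gm\}\cong\Gm$.

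The heart of the argument is to control $K:=\ker\rho$. Given $g\in K$, its semisimple part $g_s$ lies in $\overline{\langle g\rangle}\subseteq K$, so $g_s$ is semisimple, fixes $P$, and satisfies $\rho(g_s)=\id$ by uniqueness of Jordan decomposition in $\GL(T_{P}\widetilde{X})$. By Lemma \ref{l:t} the group $\overline{\langle g_s\rangle}$ lies in a quasitorus, hence is itself diagonalizable, in particular reductive; it fixes $P$ (the stabilizer of $P$ is closed), so Proposition \ref{cl:r} applied to $\widetilde{X}$ shows $\rho$ is faithful on it and therefore $g_s=\id$. Thus every element of $K$ is unipotent; since a unipotent element of finite order in characteristic $0$ is trivial, the finite group $K/K^{0}$ is trivial, i.e.\ $K$ is connected. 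Finally, $\rho(G^{0})$ is a connected subgroup of $\Gm$, hence trivial or all of $\Gm$; it cannot be trivial, since otherwise $G^{0}\subseteq K$ would be unipotent, contradicting $T\cong\Gm\subseteq G^{0}$. Therefore $\rho(G^{0})=\rho(G)$, which gives $G=G^{0}\cdot K$, and since $K$ is connected, $K\subseteq G^{0}$ and $G=G^{0}$.

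I expect the main obstacle to be the analysis of $K$: one must pass to $\widetilde{X}$ rather than $X$ (where the tangent space at $\mu(E)$ behaves differently according to whether $E$ is a $(-1)$- or $(-2)$-curve), and, since $G$ need not be reductive, reduce the faithfulness of $\rho$ on $K$ to the reductive --- indeed diagonalizable --- subgroups generated by semisimple elements before invoking Proposition \ref{cl:r}.
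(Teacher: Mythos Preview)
Your proof is correct and essentially the same as the paper's. Both arguments (i) use the three intersection points $P_i=E\cap E_i$ to force $G$ to act trivially on $E\cong\PP^1$, (ii) observe that the tangent representation at $P=E\cap E_1$ lands in a one-dimensional torus, (iii) invoke Proposition~\ref{cl:r} on the diagonalizable closure of a semisimple element to show that anything in the kernel is unipotent, and (iv) use the given torus $T$ to surject onto the image and conclude $G=G^0$. The only difference is packaging: you analyze $K=\ker\rho$ globally and then write $G=G^0\cdot K$, whereas the paper picks $t\in T$ with $\varphi(t)=\varphi(g)$ and shows $tg^{-1}$ is unipotent directly; your care in working on $\widetilde{X}$ is a point the paper leaves implicit.
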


\begin{proof}
    Let $P$ be the intersection point of $E$ and $E_1$. Let $v$ and $v_1$ be nonzero tangent vectors to $E$ and $E_1$ at $P$, respectively. Since $E$ and $E_1$ intersect transversally, we see that $v$ and $v_1$ form a basis for $T_{P,X}$.
    Obviously, the group action of $G$ on $E$ is trivial. Then $v$ is $G$-invariant and $v_1$ is a $G$-eigenvector. 
    Consider the natural representation $$\varphi : G \longrightarrow \GL(T_{P,X}).$$ 
    By the above $\varphi(G)\subset T'$, where $T'$ is a one-dimensional torus in $\GL(T_{P,X})$. From Proposition \ref{cl:r} it follows that $\varphi(T)=T'$. Hence $\varphi(G)=T'$. 
    
    Let us show that $G$ is connected. Let $g\in G$. Then there is $t\in T$ such that $\varphi(g)=\varphi(t)$. Since the element $(tg^{-1})_u^{-1} tg^{-1}$ is semisimple, we see that this element lies in a quasitorus by Lemma \ref{l:t}. Natural representation of this quasitorus is faithful by Proposition \ref{cl:r}. Clearly, $\varphi(tg^{-1})=1$. Therefore, we have $$\varphi((tg^{-1})_u^{-1} tg^{-1}) = \varphi(tg^{-1})_u^{-1}\varphi(tg^{-1}) = 1.$$ Thus, since $t\in T\subset G^0$ and $(tg^{-1})_u\in G^0$, $g=(tg^{-1})_u^{-1} t \in G^0$. This completes the proof of the lemma.
\end{proof}

The following lemma can be proved similarly.

\begin{lemma}
    \label{l2}
    Let $G\subset \Aut(X)$ be an algebraic subgroup. Let $E_1$, $E_2$ be distinct $G$-invariant smooth rational curves on $\widetilde{X}$ such that $E_1\cdot E_2=1$. In addition, suppose that $G$ contains a two-dimensional torus $T$. Then $G$ is connected.
\end{lemma}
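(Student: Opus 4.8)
The plan is to imitate the proof of Lemma \ref{l1} almost verbatim, trading the rank-one torus for a rank-two one and replacing the ``three invariant curves through $E$'' input (which there forced the action on $E$ to be trivial) by a direct analysis of the $G$-action on the tangent space at the single intersection point. First I would set $P := E_1 \cap E_2$. Since $E_1\cdot E_2 = 1$, the curves meet transversally at the single point $P$, and since both $E_1$ and $E_2$ are $G$-invariant, the point $P$ is fixed by $G$. Choosing nonzero tangent vectors $v_1, v_2$ to $E_1, E_2$ at $P$, transversality gives that $\{v_1,v_2\}$ is a basis of $T_{P,X}$; moreover $G$-invariance of $E_i$ makes the line $\KK v_i$ a $G$-submodule, so $v_1$ and $v_2$ are eigenvectors for the natural representation $\varphi\colon G\to\GL(T_{P,X})$. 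Consequently $\varphi(G)$ is contained in the diagonal maximal torus $T'\cong\Gm^2$ of $\GL(T_{P,X})$ with respect to this basis.

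Next I would apply Proposition \ref{cl:r} to the reductive group $T$, which fixes $P$: the restriction $\varphi|_T$ is faithful, so $\varphi(T)$ is a two-dimensional subtorus of the two-dimensional torus $T'$, whence $\varphi(T) = T'$, and therefore $\varphi(G) = T'$ as well. This is the one place where the hypothesis $\dim T = 2$ is used, and it is what guarantees $\varphi(G)$ is no larger than a torus.

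For the connectedness I would then repeat the endgame of Lemma \ref{l1}: given $g\in G$, pick $t\in T$ with $\varphi(g) = \varphi(t)$, and set $h := tg^{-1}$, so $\varphi(h) = 1$. The element $h_u^{-1}h = h_s$ is semisimple, hence lies in a quasitorus by Lemma \ref{l:t}, and the natural representation of that quasitorus is faithful by Proposition \ref{cl:r}; since $\varphi(h_s) = \varphi(h_u)^{-1}\varphi(h) = 1$, we get $h_s = 1$, so $h = h_u$ is unipotent and thus $h\in G^0$. Then $g = th^{-1}\in G^0$ because $t\in T\subset G^0$, so $G = G^0$ is connected. I do not anticipate any real obstacle: the only points needing care are verifying that $P$ is a $G$-fixed point and that $v_1,v_2$ are eigenvectors, i.e.\ that $\varphi(G)$ sits inside a (two-dimensional) torus; everything after that is identical to Lemma \ref{l1}.
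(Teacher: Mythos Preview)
Your proposal is correct and is precisely the ``similar'' proof the paper has in mind: the paper does not spell out the argument for Lemma~\ref{l2} but simply states that it can be proved in the same way as Lemma~\ref{l1}. Your adaptation---replacing the three-curve configuration (which forces one eigenvalue to be $1$, hence $\varphi(G)\subset\Gm$) by the observation that two transversal $G$-invariant curves give two eigenlines (hence $\varphi(G)\subset\Gm^2$), and then using the two-dimensional torus to fill that target---is exactly the intended modification.
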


Using these lemmas we are going to prove the following proposition.

\begin{proposition}
    Let $X$ be a surface of one of the types \ref{d=1:E8}-\ref{d=7} except for the cases \ref{d=2:A7}, \ref{d=3:A5}, and \ref{d=4:A3-4lines}. Then there is an exact sequence $$1 \longrightarrow \Aut^0(X) \longrightarrow \Aut(X) \longrightarrow \Aut(\Gamma(X)).$$
\end{proposition}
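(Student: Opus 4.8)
The plan is to identify $\Aut^0(X)$ with the kernel of the $\Aut(X)$-action on $\Gamma(X)$; this identification is precisely the asserted exactness. One inclusion I would get for free: by Lemma~\ref{lg2} the set of vertices of $\Gamma(X)$ (the negative curves on $\widetilde X$) is finite, so the connected group $\Aut^0(X)$ fixes every vertex and hence acts trivially on $\Gamma(X)$. Thus it remains to show that $G := \ker\bigl(\Aut(X)\to\Aut(\Gamma(X))\bigr)$ is connected; once this is established, $G$ lies in the identity component $\Aut^0(X)$ (a connected subgroup containing the identity lies in the identity component), and together with the reverse inclusion this gives $G=\Aut^0(X)$.

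To show $G$ is connected I would feed the structure of $\Gamma(X)$ into Lemmas~\ref{l1} and~\ref{l2}. First note that $G$ is an algebraic subgroup of $\Aut(X)$ containing $\Aut^0(X)$, hence containing every torus contained in $\Aut^0(X)$; and by the very definition of $G$, every vertex of $\Gamma(X)$ is $G$-invariant. Each such vertex is, by Lemma~\ref{lg1} and the adjunction formula, a smooth rational curve on $\widetilde X$, and the mutual intersection numbers of these curves are read off from the edges of $\Gamma(X)$. So all the hypotheses of Lemmas~\ref{l1} and~\ref{l2} concerning the curves are available; the only further thing one needs in each case is a torus of the right dimension inside $\Aut^0(X)$.

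Then I would run through the types \ref{d=1:E8}--\ref{d=7}, excluding the three stated exceptions, using the description of $\Aut^0(X)$ from \cite{che} and \cite{mar} recorded in the table of Appendix~\ref{section:tables}. For each such type I expect one of three situations to occur: either $\Aut(X)=\Aut^0(X)$, in which case there is nothing to prove; or $\Aut^0(X)$ contains a two-dimensional torus and $\Gamma(X)$ contains two vertices joined by a single edge, in which case Lemma~\ref{l2} applies to $G$; or $\Aut^0(X)$ contains a one-dimensional torus and $\Gamma(X)$ contains a vertex $E$ adjacent, each with multiplicity one, to three pairwise non-adjacent vertices $E_1,E_2,E_3$, in which case Lemma~\ref{l1} applies to $G$. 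In every case $G$ turns out to be connected, which finishes the argument.

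The hard part is the last step: the case-by-case verification that $\Gamma(X)$ carries the configuration required by the relevant lemma — in particular the ``fork'' $E$--$E_1,E_2,E_3$ needed for Lemma~\ref{l1} when only a one-dimensional torus is present. This is exactly the point at which the excluded types \ref{d=2:A7}, \ref{d=3:A5} and \ref{d=4:A3-4lines} resist a uniform treatment: in each of them $\Aut^0(X)$ is one-dimensional while $\Gamma(X)$ contains no vertex with three pairwise non-adjacent neighbours, so neither Lemma~\ref{l1} nor Lemma~\ref{l2} is available and these surfaces must be analysed separately.
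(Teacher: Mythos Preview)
Your argument is correct and is essentially the paper's own proof: show that the kernel of the action on $\Gamma(X)$ is connected by feeding the torus contained in $\Aut^0(X)$ (supplied by the classification) and the $G$-invariant negative curves on $\widetilde X$ into Lemma~\ref{l1} or Lemma~\ref{l2}, the choice being dictated by the rank of the torus and the shape of the dual graph (the paper appeals to \cite[Appendix~A]{che} for the latter verification). One small inaccuracy in your closing paragraph: the genuine obstruction in the three excluded types is not the absence of a fork in $\Gamma(X)$ but that $\Aut^0(X)\cong\Ga$ there, so the torus hypothesis of both lemmas fails outright regardless of the graph.
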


\begin{proof}
    Let $X$ be such surface. Let $\varphi \in \Aut(X)$ such that $\varphi$ acts trivially on $\Gamma(X)$. We only need to prove that $\varphi\in \Aut^0(X)$. By the classification of Du Val del Pezzo surfaces whose automorphism groups are infinite, $\Aut^0(X)$ contains a torus of positive dimenseion. We want to apply either Lemma \ref{l1} or Lemma \ref{l2} to the algebraic subgroup $G\subset\Aut(X)$ generated by $\varphi$ and $\Aut^0(X)$. Notice that $G=\langle \varphi\rangle\cdot \Aut^0(X)$. We need to find curves as in the lemmas. Since every curve with negative self-intersection number on $\widetilde{X}$ is rational, smooth, and $G$-invariant, we see that the structure of the dual graph shows (see \cite[Appendix A]{che}) that such curves exist. Thus, $G=\Aut^0(X)$ and $\varphi \in \Aut^0(X)$.
\end{proof}

The following proposition is well-known (see e.g. \cite{dol}).

\begin{proposition}
    \label{c4}
    \begin{itemize}
    \item 
    $\Aut(X)=\Gm^2\rtimes \DD_6$, where $X$ is the smooth del Pezzo surface of degree $6$,
    \item
    $\Aut(\Bl_P(\PP^1\times \PP^1))= (\BB_2\times \BB_2)\rtimes \ZZ/2\ZZ$,
    \item
    $\Aut (\PP(1,1,2))=\GG_a^3\rtimes(\GL_2(\KK)/\mumu_2)$,
    \item
    $\Aut (\FF_1)=\GG_a^2\rtimes\GL_2(\KK)$,
    \item
    $\Aut(\PP^1\times \PP^1)=(\PGL_2 (\KK)\times \PGL_2 (\KK))\rtimes \ZZ/2\ZZ$,
    \item
    $\Aut(\PP^2)=\PGL_3 (\KK)$. 
    \end{itemize}
\end{proposition}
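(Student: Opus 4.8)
The plan is to treat the six surfaces separately, but in each case by the same two steps: first identify $\Aut^0(X)$ geometrically — as the stabilizer, inside an ambient linear group, of the configuration of negative curves, or as the big torus in the toric cases — and then identify the induced action of $\Aut(X)$ on a rigid finite object attached to $X$ (its Picard lattice, its set of $(-1)$-curves, or its set of rulings), whose kernel is exactly $\Aut^0(X)$, finally exhibiting explicit automorphisms that surject onto the finite quotient and split the extension. Everything here is classical and may also be quoted from \cite[Chapter~8]{dol}.

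For $\PP^2$ one invokes the classical fact that $\Aut(\PP^n)=\PGL_{n+1}(\KK)$. For $\PP^1\times\PP^1$, every automorphism acts on $\Pic(\PP^1\times\PP^1)=\ZZ f_1\oplus\ZZ f_2$ preserving the intersection form and the canonical class, hence either fixes the two ruling classes or swaps them; the subgroup fixing both preserves each projection to $\PP^1$ and is therefore $\PGL_2(\KK)\times\PGL_2(\KK)$, while the transposition $(x,y)\mapsto(y,x)$ realizes the quotient $\ZZ/2\ZZ$ and visibly splits the sequence. For $\FF_1$ the $(-1)$-curve is unique, hence invariant, so contracting it identifies $\Aut(\FF_1)$ with the stabilizer in $\PGL_3(\KK)$ of the blown-up point, i.e. with a maximal parabolic subgroup, which as an abstract group is $\Ga^2\rtimes\GL_2(\KK)$. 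For $\Bl_P(\PP^1\times\PP^1)$, which is the del Pezzo surface of degree $7$ and hence isomorphic to $\PP^2$ blown up at two distinct points $p_1,p_2$, the three $(-1)$-curves form a chain $E_1 - L - E_2$, so $\Aut$ permutes the endpoints and therefore the set $\{p_1,p_2\}$; the stabilizer of each $p_i$ acts on the pencil of lines through it as a Borel subgroup of $\PGL_2(\KK)$, giving $\Aut^0=\BB_2\times\BB_2$, and the transposition of $p_1$ and $p_2$ splits off the $\ZZ/2\ZZ$.

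For the del Pezzo surface $X$ of degree $6$, the six $(-1)$-curves form a hexagon, so $\Aut(\Gamma(X))=\DD_6$; since $X$ is toric with $\Aut^0(X)=\Gm^2$ acting trivially on $\Gamma(X)$, the quotient $\Aut(X)/\Aut^0(X)$ embeds in $\DD_6$, and surjectivity together with the splitting follows by displaying the standard symmetries — the $\mathfrak{S}_3$ of coordinate permutations and the Cremona involution $[x_0:x_1:x_2]\mapsto[x_1x_2:x_0x_2:x_0x_1]$ (a birational self-map of $\PP^2$ that becomes regular on $X$) — which normalize the torus and generate a copy of $\DD_6$. Finally, for $\PP(1,1,2)$ I would argue directly from the Cox ring: every automorphism is induced by a graded $\KK$-algebra automorphism of $\KK[x_0,x_1,x_2]$ with $\deg x_0=\deg x_1=1$, $\deg x_2=2$, necessarily of the form $x_i\mapsto\ell_i(x_0,x_1)$ for $i=0,1$ with $(\ell_0,\ell_1)\in\GL_2(\KK)$, and $x_2\mapsto\beta x_2+q(x_0,x_1)$ with $\beta\in\Gm$ and $q$ a binary quadratic; the quadratic shifts form a normal unipotent subgroup $\Ga^3\cong H^0(\PP^1,\OOO(2))$, the complementary ``linear'' part is $\GL_2(\KK)\times\Gm$ modulo the central one-parameter subgroup $\{(\lambda I,\lambda^2)\}$ of graded rescalings, which acts trivially, and $(\GL_2(\KK)\times\Gm)/\{(\lambda I,\lambda^2)\}\cong\GL_2(\KK)/\mumu_2$, so $\Aut(\PP(1,1,2))=\Ga^3\rtimes(\GL_2(\KK)/\mumu_2)$.

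The only points that are not a completely routine check are the bookkeeping in the $\PP(1,1,2)$ case — pinning down precisely which one-parameter subgroup of $\GL_2(\KK)\times\Gm$ acts trivially, and verifying the quotient is $\GL_2(\KK)/\mumu_2$ rather than, say, $\PGL_2(\KK)\times\Gm$ — together with confirming in the degree $6$ and degree $7$ cases that the explicit automorphisms exhibited really generate the entire finite quotient and not a proper subgroup. Both are settled by a short computation in coordinates, and the whole proposition is in any case standard.
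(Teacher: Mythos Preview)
Your proof is correct, and it goes considerably beyond what the paper does: the paper treats this proposition as classical and simply cites \cite{dol} without giving any argument. Your case-by-case arguments are the standard ones one would find in Dolgachev or reconstruct from scratch, so in that sense you are supplying the proof the paper omits rather than taking a different route. One small remark: in the degree-$7$ case your phrase ``the stabilizer of each $p_i$ acts on the pencil of lines through it as a Borel subgroup'' is a shorthand that does not by itself pin down the group structure --- the clean statement is that $\Aut^0$ is the stabilizer of the ordered pair $(p_1,p_2)$ in $\PGL_3(\KK)$, which one checks directly in coordinates is $\Ga^2\rtimes\Gm^2$ with the two torus factors acting independently on the two $\Ga$'s, hence $\BB_2\times\BB_2$. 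The $\PP(1,1,2)$ bookkeeping you flag is handled correctly.
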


The above proposition gives a description of the automorphism group in the cases  \ref{d=6:smooth}, \ref{d=7:smooth}, \ref{d=8}, \ref{d=8:F1}, \ref{d=8:P1-P1}, and \ref{d=9:P2}, respectively.

\begin{lemma}
    \label{lm}
    Let $X$ be the surface of type \ref{d=4:4A1}, \ref{d=4:3A1}, \ref{d=5:A2}, \ref{d=5:2A1}, \ref{d=5:A1}, \ref{d=6:A2}, \ref{d=6:A1-3l} or \ref{d=6:A1-4l}. Then the natural homomorphism $$\Aut(X) \longrightarrow \Aut(\Gamma(X))$$ is surjective and has a section.
\end{lemma}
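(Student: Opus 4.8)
The plan is to handle the eight surfaces case by case, but uniformly, since in each case $\Aut^0(X)$ is explicitly known from \cite{che} (it is one of $\Gm$, $\Gm^2$, or a product of these with unipotent pieces), and the finite group $\Aut(X)/\Aut^0(X)$ will be realized as a subgroup of $\Aut(\Gamma(X))$ fixing some geometric data. First I would, for each listed type, write down the dual graph $\Gamma(X)$ explicitly (these are tabulated in \cite[Appendix A]{che}), compute $\Aut(\Gamma(X))$ as an abstract finite group, and identify which automorphisms of the graph can possibly be induced by automorphisms of $X$ (a priori all of them, since $\Aut(X)$ must permute the negative curves respecting intersection numbers and the $(-1)$/$(-2)$ distinction). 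The surjectivity half then amounts to exhibiting, for each generator of $\Aut(\Gamma(X))$, an actual automorphism of $X$ inducing it; I would do this by realizing $X$ (or its minimal resolution $\widetilde X$) as a blow-up of $\PP^2$ or $\PP^1\times\PP^1$ at an explicit configuration of points and writing the required Cremona-type or linear transformation permuting those points.

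For the section, the key observation is that these are precisely the cases not excluded in the main theorem, so one expects a splitting $\Aut(X)=\Aut^0(X)\rtimes F$ with $F\cong\Aut(\Gamma(X))$. To produce the section I would look for a point (or finite set of points) on $\widetilde X$, or a sublattice of $\operatorname{Pic}(\widetilde X)$, that is canonically attached to $X$ and on which $\Aut^0(X)$ acts trivially while the full $\Aut(X)$ acts through its quotient; concretely, in the del Pezzo degree $5$ and $6$ cases the torus $\Gm$ or $\Gm^2$ has a distinguished fixed point configuration on $\widetilde X$, and the finite symmetries of the anticanonical model (e.g.\ the standard $\DD_6$ on the degree $6$ surface, or the $S_5$-type action on the quintic) give an explicit complement. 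For the degree $4$ cases one uses the pencil structure: the two or three $A_1$-points and the lines among them give a rigid combinatorial skeleton, and the normalizer of the torus inside $\PGL$ acting on the appropriate $\PP^4$ supplies a finite subgroup mapping isomorphically onto $\Aut(\Gamma(X))$.

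Concretely, the uniform mechanism for the section is: inside $\GL(T_{P,X})$ at a suitable $\Aut^0(X)$-fixed point $P$, pick a finite subgroup mapping isomorphically onto the image of $\Aut(X)$, lift it using Proposition \ref{cl:r} (faithfulness of the tangent representation of a reductive subgroup at a fixed point), and check it is a complement to $\Aut^0(X)$ by a dimension/connectedness count as in the proofs of Lemma \ref{l1} and Lemma \ref{l2}; alternatively, average or use that $\Aut^0(X)$ is a connected solvable (in fact here often just a torus) normal subgroup so that, $\Aut(X)$ being linear algebraic, a Levi-type or Chevalley decomposition argument yields the splitting once one knows $\Aut(X)/\Aut^0(X)$ embeds in $\Aut(\Gamma(X))$.

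The main obstacle I anticipate is the surjectivity claim in the cases where $\Aut(\Gamma(X))$ is larger than one might naively expect — for instance when the graph has extra symmetry not obviously coming from geometry — so that one must genuinely exhibit the missing automorphism by an explicit (possibly non-linear, Cremona) construction; and, secondarily, verifying that the finite subgroup one writes down really does inject into $\Aut(X)/\Aut^0(X)$ rather than meeting $\Aut^0(X)$, which requires knowing the fixed loci of $\Aut^0(X)$ precisely. Both are finite checks across eight surfaces, so the proof will be a case analysis rather than a single clean argument, but no step should require more than explicit point-configuration bookkeeping together with the lemmas already established.
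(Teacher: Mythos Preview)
Your concrete strategy in the first paragraph --- realize $\widetilde X$ as a blow-up of a minimal rational surface at an explicit point configuration, then lift the symmetries of that configuration --- is exactly what the paper does. The paper uses not only $\PP^2$ and $\PP^1\times\PP^1$ but also $\FF_1$ and $\FF_2$ as base surfaces depending on the case, and the key point is that surjectivity and the section come out \emph{simultaneously}: the finite group $\widetilde G$ lifted from the base already maps isomorphically onto $\Aut(\Gamma(X))$, so $\widetilde G$ itself is the section. There is no need for a separate argument.

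Your proposed ``uniform mechanism'' in the third paragraph, however, does not work as stated. Proposition~\ref{cl:r} says that a reductive subgroup of $\Aut(X)$ injects into $\GL(T_{P,X})$; it gives no way to lift a subgroup of $\GL(T_{P,X})$ back to $\Aut(X)$, so you cannot ``pick a finite subgroup \dots\ and lift it using Proposition~\ref{cl:r}.'' Likewise, a Levi decomposition is not directly applicable: $\Aut^0(X)$ in these cases is typically a torus (or a torus times something unipotent), not the unipotent radical, and there is no general splitting theorem for arbitrary connected solvable normal subgroups of linear algebraic groups. Fortunately none of this is needed --- once you carry out the blow-up construction from your first paragraph, you are done, and the second and third paragraphs can be dropped.
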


\begin{proof}
Consider cases \ref{d=4:4A1}, \ref{d=4:3A1}, \ref{d=5:A2}, \ref{d=5:2A1}, \ref{d=5:A1}, \ref{d=6:A2}, \ref{d=6:A1-3l}, \ref{d=6:A1-4l} separately.
\subsection*{Case \ref{d=4:4A1}}
 Let $X$ be the surface of type \ref{d=4:4A1}. The dual graph of $X$ is as follows
\begin{center}
\begin{tikzpicture}
\node[circle,draw=black] (0) at (1,-1) {};
	\node[circle,fill=black] (1) at (1,0) {};
	\node[circle,draw=black] (2) at (2,0) {};
	\node[circle,fill=black] (3) at (3,0) {};
	\node[circle,draw=black] (4) at (4,0) {};
	\node[circle,fill=black] (5) at (4,-1) {};
\node[circle,draw=black] (6) at (3,-1) {};
\node[circle,fill=black] (7) at (2,-1) {};
\path (0) edge (1);
	\path (1) edge (2);
	\path (2) edge (3);
	\path (3) edge (4);
	\path (4) edge (5);
\path (5) edge (6);
\path (6) edge (7);
\path (7) edge (0);
\end{tikzpicture}
\end{center}
and $\Aut(\Gamma(X))=\DD_4$.
Consider $\PP^1\times\PP^1$ with the points $P=(0,0)$, $P'=(0,1)$, $Q=(1,0)$, and $Q'=(1,1)$. Then there exists a group $G\subset \Aut(\PP^1\times\PP^1)$ such that $G$ preserves the set $\{P, P', Q, Q'\}$, an action on the set $\{P, P', Q, Q'\}$ is faithful, and $G$ is isomorphic to $\DD_4$. Obviously, $G$ lifts to the subgroup $\widetilde{G}\subset\Aut(\Bl_{P, P', Q, Q'}(\PP^1\times\PP^1))$. Notice that $\widetilde{X}$ is isomorphic to $\Bl_{P, P', Q, Q'}(\PP^1\times\PP^1)$. We see that the natural homomorphism $\widetilde{G}\to\Aut(\Gamma(X))$ is an isomorphism, so we are done.

Similarly, if $X$ is of type \ref{d=5:2A1} or \ref{d=6:A1-4l}, then the natural homomorphism $\Aut(X) \to \Aut(\Gamma(X))$ is surjective and has a section.

\subsection*{Case \ref{d=4:3A1}}
Let $X$ be the surface of type \ref{d=4:3A1}. The dual graph of $X$ is as follows
\begin{center}
\begin{tikzpicture}
\node[circle,fill=black] (0) at (0,0) {};
	\node[circle,draw=black] (1) at (1,0) {};
	\node[circle,draw=black] (2) at (-1,0) {};
	\node[circle,fill=black] (3) at (0,1) {};
	\node[circle,fill=black] (4) at (0,-1) {};
\node[circle,fill=black] (5) at (2,1) {};
\node[circle,draw=black] (6) at (3,0) {};
\node[circle,fill=black] (7) at (2,-1) {};
\node[circle,fill=black] (8) at (2,0) {};
	\path (5) edge (6);
	\path (2) edge (3);
	\path (2) edge (4);
\path (6) edge (7);
\path (3) edge (5);
\path (4) edge (7);
\path (8) edge (6);
\path (8) edge (1);
\path (0) edge (1);
\path (0) edge (2);
\end{tikzpicture}
\end{center}
and $\Aut(\Gamma(X))=(\ZZ/2\ZZ)^2$.
Consider $\PP^2$ with the points $P=(1:0:0)$, $Q=(0:1:0)$, $R=(0:0:1)$. Let $L'=\{x=0\}$ and $L=\{x+y=0\}$. The transformations 
    \begin{eqnarray*}
    (x:y:z)&\longmapsto& (y:x:z)
    \\
    (x:y:z)&\longmapsto& (yz:xz:xy)
    \end{eqnarray*}
    generate a subgroup $G\subset\Bir(\PP^2)$. This subgroup is isomorphic to $(\ZZ/2\ZZ)^2$. Now, consider $\varphi: \Bl_{P,Q,R}(\PP^2)\to\PP^2$. Notice that $\Bl_{P,Q,R}(\PP^2)$ is isomorphic to $\widetilde{Y}$, where $Y$ is the surface \ref{d=6:smooth}. Obviously, $G$ lifts to the subgroup $\widetilde{G}\subset\Aut(\widetilde{Y})$. 
     Consider $P'=\varphi^{-1}(P)\cap \widetilde{L}$ and $P''=\widetilde{L'}\cap\widetilde{L}$. Notice that $\Bl_{P', P''}(\widetilde{Y})$ is isomorphic to $\widetilde{X}$. Obviously, $\widetilde{G}$ lifts to the subgroup $\widehat{G}\subset\Aut(\widetilde{X})$. Now, we see that the natural homomorphism $\widehat{G}\to\Aut(\Gamma(X))$ is an isomorphism, so we are done.

\subsection*{Case \ref{d=5:A1}}
Let $X$ be the surface of type \ref{d=5:A1}. Then $\Aut^0(X)$ is isomorphic to $\Gm$. The dual graph of $X$ is as follows
\begin{center}
\begin{tikzpicture}
\node[circle,fill=black] (0) at (0,0) {};
	\node[circle,fill=black] (1) at (1,0) {};
	\node[circle,draw=black] (2) at (-1,0) {};
	\node[circle,fill=black] (3) at (0,1) {};
	\node[circle,fill=black] (4) at (0,-1) {};
\node[circle,fill=black] (5) at (1,1) {};
\node[circle,fill=black] (6) at (2,0) {};
\node[circle,fill=black] (7) at (1,-1) {};
	\path (5) edge (6);
	\path (2) edge (3);
	\path (2) edge (4);
\path (6) edge (7);
\path (3) edge (5);
\path (4) edge (7);
\path (1) edge (6);
\path (0) edge (1);
\path (0) edge (2);
\end{tikzpicture}
\end{center}
and $\Aut(\Gamma(X))=\DD_3$.
Now, consider $\FF_1$ with a smooth rational curve $E\subset\FF_1$ with self-intersection $1$. Consider distinct points $P, Q, R\in E$. Then there exists a group $G\subset \Aut(\FF_1)$ such that $G$ preserves the set $\{P, Q, R\}$, an action on the set $\{P, Q, R\}$ is faithful, and $G$ is isomorphic to $\DD_3$.  Obviously, $G$ lifts to the subgroup $\widetilde{G}\subset\Aut(\Bl_{P, Q, R}(\FF_1))$. Notice that $\Bl_{P, Q, R}(\FF_1)$ is isomorphic to $\widetilde{X}$. Therefore, we see that the natural homomorphism $\widetilde{G}\to\Aut(\Gamma(X))$ is an isomorphism, so we are done. Moreover, $\Aut(X)$ is isomorphic to $\Aut^0(X)\times\Aut(\Gamma(X))$.

\subsection*{Case \ref{d=6:A2}}
Let $X$ be the surface of type \ref{d=6:A2}. Then $\Aut^0(X)$ is isomorphic to $\UUU_3\rtimes\Gm$. The dual graph of $X$ is as follows
\begin{center}
\begin{tikzpicture}
	\node[circle,draw=black] (1) at (0,0) {};
	\node[circle,draw=black] (2) at (-1.4,0) {};
\node[circle,fill=black] (3) at (1,1) {};
\node[circle,fill=black] (4) at (1,-1) {};
	\path (1) edge (2);
\path (1) edge (3);
\path (1) edge (4);
\end{tikzpicture}
\end{center}
and $\Aut(\Gamma(X))=\ZZ/2\ZZ$.
Now, consider $\FF_2$ with a fiber $F\subset\FF_2$. Then consider distinct points $P, Q\in F$. There exists a group $G\subset \Aut(\FF_2)$ such that $G$ preserves the set $\{P, Q\}$, an action on the set $\{P, Q\}$ is faithful, and $G$ is isomorphic to $\ZZ/2\ZZ$.  Obviously, $G$ lifts to the subgroup $\widetilde{G}\subset\Aut(\Bl_{P, Q}(\FF_2))$. Notice that $\Bl_{P, Q}(\FF_2)$ is isomorphic to $\widetilde{X}$. We see that the natural homomorphism $\widetilde{G}\to\Aut(\Gamma(X))$ is an isomorphism, so we are done.

Similarly, if $X$ is of type \ref{d=5:A2}, then $\Aut(X) \to \Aut(\Gamma(X))$ is surjective and has a section.

\subsection*{Case \ref{d=6:A1-3l}}
Let $X$ be the surface of type \ref{d=6:A1-3l}. Then $\Aut^0(X)$ is isomorphic to $\GG_a^2\rtimes\Gm$. The dual graph of $X$ is as follows
\begin{center}
\begin{tikzpicture}
	\node[circle,draw=black] (1) at (0,0) {};
	\node[circle,fill=black] (2) at (-1,0) {};
\node[circle,fill=black] (3) at (0,1) {};
\node[circle,fill=black] (4) at (1,0) {};
	\path (1) edge (2);
\path (1) edge (3);
\path (1) edge (4);
\end{tikzpicture}
\end{center}
and  $\Aut(\Gamma(X))=\DD_3$.
Consider $\PP^2$ with a line $L\subset\PP^2$. Then consider distinct points $P, Q, R\in L$. There exists a group $G\subset \Aut(\PP^2)$ such that $G$ preserves the set $\{P, Q, R\}$, an action on the set $\{P, Q, R\}$ is faithful, and $G$ is isomorphic to $\DD_3$.  Then $G$ lifts to the subgroup $\widetilde{G}$ of $\Aut(\Bl_{P, Q, R}(\PP^2))$. Notice that $\Bl_{P, Q, R}(\PP^2)$ is isomorphic to $\widetilde{X}$. We see that the natural homomorphism $\widetilde{G}\to\Aut(\Gamma(X))$ is an isomorphism, so we are done.
\end{proof}

Now, we are ready to prove the following proposition.

\begin{proposition}
    \label{cm}
    Let $X$ be the surface of type \ref{d=1:E8}-\ref{d=6:A1-4l} or \ref{d=7} except for the cases \ref{d=1:2D4}, \ref{d=2:A7}, \ref{d=2:2A3}, \ref{d=3:A5}, \ref{d=3:2A2}, \ref{d=4:A3-4lines} and \ref{d=4:2A1-8lines}. Then the natural homomorphism $$\Aut(X) \longrightarrow \Aut(\Gamma(X))$$ is surjective and has a section.
\end{proposition}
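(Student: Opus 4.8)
The plan is to reduce the statement to a short list of types and then argue case by case, the argument being empty for almost all of them.

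First, type \ref{d=6:smooth} is covered by Proposition \ref{c4}: there $\Aut(X)=\Gm^2\rtimes\DD_6$, the graph $\Gamma(X)$ is a hexagon of $(-1)$-curves, $\Aut(\Gamma(X))=\DD_6$, and the finite factor $\DD_6$ maps isomorphically onto it; any other smooth type occurring in the statement is handled the same way from Proposition \ref{c4}. For the types \ref{d=4:4A1}, \ref{d=4:3A1}, \ref{d=5:A2}, \ref{d=5:2A1}, \ref{d=5:A1}, \ref{d=6:A2}, \ref{d=6:A1-3l} and \ref{d=6:A1-4l} the assertion is precisely Lemma \ref{lm}. It therefore remains to treat the Du Val del Pezzo surfaces of degree at most $4$, together with the singular surfaces of degree $5$, $6$ and $7$ not listed above, that do not occur among \ref{d=1:2D4}, \ref{d=2:A7}, \ref{d=2:2A3}, \ref{d=3:A5}, \ref{d=3:2A2}, \ref{d=4:A3-4lines} and \ref{d=4:2A1-8lines}.

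For these remaining types I use the following observation. By the Proposition establishing the exact sequence $1\to\Aut^0(X)\to\Aut(X)\to\Aut(\Gamma(X))$ — which applies, since none of the present types are among its exceptions \ref{d=2:A7}, \ref{d=3:A5}, \ref{d=4:A3-4lines} — the kernel of $\Aut(X)\to\Aut(\Gamma(X))$ is $\Aut^0(X)$. Hence it suffices to exhibit a finite subgroup $G\subset\Aut(X)$ that acts faithfully on $\Gamma(X)$ and surjects onto $\Aut(\Gamma(X))$: faithfulness forces $G\cap\Aut^0(X)=1$, so $G$ is a complement and supplies the section. Reading $\Gamma(X)$ off the table in Appendix \ref{section:tables} (see also \cite[Appendix A]{che}) and computing its automorphism group, one finds that for the vast majority of these types the dual graph is rigid: the $(-1)$-curves are attached to the $(-2)$-curve subdiagram (of type $E_8$, $E_7$, $E_6$, $D_n$ or $A_n$, or a union of such) in a way that breaks all of its symmetries, so $\Aut(\Gamma(X))=1$ and there is nothing to prove.

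There remain finitely many types, those for which $\Aut(\Gamma(X))$ is nontrivial — one of $\ZZ/2\ZZ$, $(\ZZ/2\ZZ)^2$ or $\DD_3$. For each of them one constructs $G$ exactly as in the proof of Lemma \ref{lm}: either one presents $\widetilde X$ as an iterated blow-up of one of $\PP^2$, $\FF_1$, $\FF_2$, $\PP^1\times\PP^1$ at a configuration of (possibly infinitely near) points invariant under a finite subgroup of the automorphism group of that minimal surface which induces the prescribed graph symmetry, and lifts that subgroup to $\widetilde X$; or one uses the explicit model of $X$ from the classification of \cite{che} and \cite{mar} — a cubic surface in $\PP^3$, a complete intersection of two quadrics in $\PP^4$, a double cover of $\PP^2$ branched over a quartic curve, or a hypersurface of degree $6$ in $\PP(1,1,2,3)$ — on which the generators of $\Aut(\Gamma(X))$ are visible as coordinate symmetries, and lifts them to $\widetilde X$. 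In either case one then checks that the resulting transformations act faithfully on $\Gamma(X)$, have image all of $\Aut(\Gamma(X))$, and satisfy the defining relations of that group. The cases of degree $3$ and $4$ are routine, since the needed finite symmetry groups are realized by monomial transformations preserving the defining polynomials; the main obstacle is the surfaces of degree $1$ and $2$, where the relevant symmetries are far less transparent and for each candidate transformation one must verify both that it preserves the branch locus — equivalently, the defining polynomial up to a scalar — and that its class modulo $\Aut^0(X)$ is represented by an element of finite order, this last normalization (which uses the explicit structure of $\Aut^0(X)$ together with the faithful action on $\Gamma(X)$) being precisely what turns surjectivity into the existence of a complement.
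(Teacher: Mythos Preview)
Your outline is broadly correct and shares the paper's overall structure (trivial graphs require nothing; Lemma \ref{lm} handles its list; the exact-sequence Proposition identifies the kernel as $\Aut^0(X)$), but it leaves a genuine gap precisely where you flag the difficulty, and it misses the device the paper uses to close it.

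For the residual types with nontrivial $\Aut(\Gamma(X))$ the paper does \emph{not} attempt a direct construction from a minimal model or from coordinate symmetries of the anticanonical embedding. Instead it runs an inductive reduction via equivariant contractions. For \ref{d=3:3A2}, \ref{d=2:D4-3A1}, \ref{d=2:2A3-A1}, \ref{d=2:D5-A1}, \ref{d=3:A3-2A1}, \ref{d=3:2A2-A1}, \ref{d=4:A3-2A1}, \ref{d=4:A3-A1}, \ref{d=4:A2-2A1}, \ref{d=4:A2-A1} one finds an $\Aut(\Gamma(X))$-invariant set of disjoint $(-1)$-curves on $\widetilde X$ whose simultaneous contraction lands on some $\widetilde Y$ of higher degree already handled (the targets are \ref{d=6:smooth}, \ref{d=3:D4}, \ref{d=4:3A1}, \ref{d=5:A1}, \ref{d=6:A1-4l}, \ref{d=5:A2}); one checks that the induced map $\Aut(\Gamma(X))\to\Aut(\Gamma(Y))$ is well-defined and injective, and then the section already built on $Y$ lifts to $X$ because the blown-up points are the intersection points of negative curves, hence fixed by the lifted group. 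For \ref{d=2:A5-A2}, \ref{d=2:E6}, \ref{d=4:D4} the paper reverses direction: there is an $\Aut(Y)$-equivariant contraction $\widetilde Y\to\widetilde X$ from a surface $Y$ of \emph{lower} degree (namely \ref{d=1:E6-A2} or \ref{d=2:E6}) with $\Aut(\Gamma(Y))\xrightarrow{\sim}\Aut(\Gamma(X))$, so the section descends. Only \ref{d=3:D4} (and the analogous \ref{d=1:E6-A2}, \ref{d=4:A3-5lines}) is done by writing down explicit projective transformations.

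This contraction machinery is exactly what dissolves the obstacle you identify in degrees $1$ and $2$: you never have to locate finite-order symmetries inside the automorphism group of the sextic or quartic model, because those cases are inherited from degree $3$ or higher. Your proposal to carry out the direct construction for each such type is viable in principle, but saying ``one checks'' is not a proof here---supplying those checks \emph{is} the content, and without the reduction they are neither short nor transparent.

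A small side remark: type \ref{d=6:smooth} is not actually in the range of the Proposition (the statement runs \ref{d=1:E8}--\ref{d=6:A1-4l} together with \ref{d=7}, skipping \ref{d=6:smooth} and \ref{d=7:smooth}), so your opening paragraph addresses a case that is not being claimed---though \ref{d=6:smooth} does reappear in the paper's argument as a contraction target.
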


\begin{proof}
    In the cases \ref{d=1:E8}, \ref{d=1:E7-A1}, \ref{d=2:E7}, \ref{d=2:D6-A1}, \ref{d=3:E6}, \ref{d=3:A5-A1}, \ref{d=3:D5}, \ref{d=3:A4-A1}, \ref{d=4:D5}, \ref{d=4:A4}, \ref{d=5:A4}, \ref{d=5:A3}, \ref{d=5:A2-A1}, \ref{d=6:A2-A1}, \ref{d=6:2A1}, \ref{d=7} the group $\Aut(\Gamma(X))$ is trivial, so we are done.

    Let $X$ be the surface of type \ref{d=4:4A1}, \ref{d=4:3A1}, \ref{d=5:A2}, \ref{d=5:2A1}, \ref{d=5:A1}, \ref{d=6:A2}, \ref{d=6:A1-3l}, \ref{d=6:A1-4l}. Then the natural homomorphism is surjective and has a section by Lemma \ref{lm}.

    Let $X$ be the surface of type \ref{d=3:D4}. Then $\Aut^0(X)=\Gm$ and $\Aut(\Gamma(X))$ is isomorphic to $\DD_3$. The surface can be given by the equation $x_0^2x_3=x_1x_2(x_1+x_2)$ in $\PP^3$. $\Aut^0(X)$ consists of the transformations 
    \begin{eqnarray*}
    (x_0:x_1:x_2:x_3)&\longmapsto& (\lambda x_0:x_1 :x_2:x_3/\lambda^2).
    \end{eqnarray*}
    The transformations
    \begin{eqnarray*}
    (x_0:x_1:x_2:x_3)&\longmapsto& (x_0:x_2:x_1:x_3),
    \\
    (x_0:x_1:x_2:x_3)&\longmapsto& (x_0:x_2:-(x_1+x_2):x_3)
    \end{eqnarray*}
    generate a subgroup $G\subset\Aut(X)$. Notice that $G$ is isomorphic to $\DD_3$. Since the action of $G$ on the set of the lines is faithful, we see that the natural homomorphism $G\to\Aut(\Gamma(X))$ is an isomorphism. Thus, $\Aut(X)=\Gm\times\DD_3$.

    Similarly, if $X$ is of type \ref{d=1:E6-A2} or \ref{d=4:A3-5lines} then the natural homomorphism is surjective and has a section. Moreover, $\Aut(X)=\Gm\times\ZZ/2\ZZ$.

    \subsection*{Case \ref{d=3:3A2}} Let $X$ be the surface of type \ref{d=3:3A2}. Let $\{E_1,E_2,E_3\}$ be the set of all $(-1)$-curves on $\widetilde{X}$. Then the set $\{E_1,E_2,E_3\}$ is $\Aut(\Gamma(X))$-invariant. There exists a simultaneous contraction  $\varphi:\widetilde{X}\to\widetilde{Y}$ of $(-1)$-curves $E_1,E_2,E_3$, where $Y$ is the surface \ref{d=6:smooth} and $\varphi(E_i)=P_i=E_i'\cap E_i''$, where $E_i'$ and $E_i''$ are curves with negative self-intersection numbers on the surface $\widetilde{Y}$. Notice that $$\Aut(\Gamma(X))\longrightarrow\Aut(\Gamma(Y))$$ is well-defined and injective. Now, let $G\subset \Aut(Y)$ be the subgroup $\Aut^0(Y)\rtimes\Aut(\Gamma(X))$. Then $\Aut(X)=\Aut(\Bl_{P_1,P_2,P_3}(\widetilde{Y}))=G$. Thus, $\Aut(\Gamma(X))$-action lifts from $Y$ to $X$, so we are done.
    
     Similarly, let $X$ be the surface of type \ref{d=2:D4-3A1}, \ref{d=2:2A3-A1}, \ref{d=2:D5-A1}, \ref{d=3:A3-2A1}, \ref{d=3:2A2-A1}, \ref{d=4:A3-2A1}, \ref{d=4:A3-A1}, \ref{d=4:A2-2A1} or \ref{d=4:A2-A1}. 
    It is easily shown that there exists a $\Aut(\Gamma(X))$-invariant set $\{E_1,\ldots,E_k\}$ of $(-1)$-curves on surface $\widetilde{X}$ such that there exists a simultaneous contraction  $\varphi:\widetilde{X}\to\widetilde{Y}$ of $(-1)$-curves $E_1,\ldots,E_k$, where $Y$ is the surface \ref{d=3:D4}, \ref{d=4:3A1}, \ref{d=3:D4}, \ref{d=5:A1}, \ref{d=4:3A1}, \ref{d=6:A1-4l}, \ref{d=5:A2}, \ref{d=6:smooth} or \ref{d=5:A1}, respectively, and $\varphi(E_i)=P_i=E_i'\cap E_i''$, where $E_i'$ and $E_i''$ are curves with negative self-intersection numbers on the surface $\widetilde{Y}$. It is easily shown that $$\Aut(\Gamma(X))\longrightarrow\Aut(\Gamma(Y))$$ is well-defined and injective. Now, let $G\subset \Aut(Y)$ be the subgroup $\Aut^0(Y)\rtimes\Aut(\Gamma(X))$. Then $\Aut(X)=\Aut(\Bl_{P_1,\ldots,P_k}(\widetilde{Y}))=G$. Thus, $\Aut(\Gamma(X))$-action lifts from $Y$ to $X$, so we are done. Let us remark that if $\Aut^0(X)=\Aut^0(Y)$ then the structure of semidirect product does not change.

    Let $X$ be the surface of type \ref{d=2:A5-A2}, \ref{d=2:E6} or \ref{d=4:D4}. Let $Y$ be the surface of type \ref{d=1:E6-A2}, \ref{d=1:E6-A2} or \ref{d=2:E6}, respectively. Then there exists $\Aut(Y)$-equivariant contraction $\varphi: \widetilde{Y}\to \widetilde{X}$ such that $$\Aut(\Gamma(Y))\longrightarrow\Aut(\Gamma(X))$$ is an isomorphism, so we are done.
\end{proof}

The two following propositions gives us a description of the automorphism groups in the remaining cases.

\begin{proposition}
    \label{c5}
    Let $X$ be the surface of type \ref{d=1:2D4}, \ref{d=2:2A3}, \ref{d=3:2A2} or
    \ref{d=4:2A1-8lines}. Then $\Aut(X)$ is isomorphic to

    Case \ref{d=1:2D4}
\begin{itemize}
\item
$(\Aut^0(X)\rtimes\ZZ/2\ZZ)\rtimes\ZZ/2\ZZ$ for $\lambda \in\{-1, \frac{1}{2}, 2\} $,
\item
$\Aut^0(X)\rtimes\ZZ/6\ZZ$ for $\lambda=\lambda^2+1$, 
\item
$\Aut^0(X)\rtimes\ZZ/2\ZZ$ otherwise.
\end{itemize}

Case \ref{d=2:2A3}
\begin{itemize} 
\item
$(\Aut^0(X)\rtimes (\ZZ/2\ZZ)^2)\rtimes\ZZ/2\ZZ$ for $\lambda = -1 $, 
\item
$\Aut^0(X)\rtimes (\ZZ/2\ZZ)^2$ otherwise,\end{itemize}

Case \ref{d=3:2A2}
\begin{itemize}
\item
$(\Aut^0(X)\rtimes\ZZ/2\ZZ)\rtimes \ZZ/2\ZZ$ for $\lambda \in\{-1, \frac{1}{2}, 2\} $,
\item
$\Aut^0(X)\rtimes\ZZ/6\ZZ$ for $\lambda=\lambda^2+1$, 
\item
$\Aut^0(X)\rtimes\ZZ/2\ZZ$ otherwise.\end{itemize}

Case \ref{d=4:2A1-8lines}
\begin{itemize}
\item
$(\Aut^0(X)\rtimes(\ZZ/2\ZZ)^3)\rtimes\ZZ/2\ZZ$ for $\lambda \in\{-1, \frac{1}{2}, 2\} $,
\item
$(\Aut^0(X)\rtimes(\ZZ/2\ZZ)^3)\rtimes\ZZ/3\ZZ$ for $\lambda=\lambda^2+1$, 
\item
$\Aut^0(X)\rtimes(\ZZ/2\ZZ)^3$ otherwise.\end{itemize}
\end{proposition}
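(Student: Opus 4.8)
The plan is to treat the four families by a common scheme. For each type I start from the explicit equation of $X$ — equivalently, from a presentation of $\widetilde{X}$ as an iterated blow-up of a minimal rational surface, or of the minimal resolution of a del Pezzo surface of larger degree already handled in Proposition \ref{cm}, at a point configuration depending on the modulus $\lambda$ — together with the description of $\Aut^0(X)$, a torus $T$, taken from \cite{che}, \cite{mar}. Since $\Aut^0(X)\supseteq T$, the proposition giving the exact sequence $1\to\Aut^0(X)\to\Aut(X)\to\Aut(\Gamma(X))$ applies, so $\Aut(X)/\Aut^0(X)$ embeds into the finite group $\Aut(\Gamma(X))$ and the problem splits into (i) computing the image $I_\lambda\subseteq\Aut(\Gamma(X))$ of $\Aut(X)$ and (ii) identifying the extension $1\to\Aut^0(X)\to\Aut(X)\to I_\lambda\to 1$. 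The preliminary step is to compute $\Aut(\Gamma(X))$ for the four dual graphs: each has two isomorphic subconfigurations attached to the two equal singularities ($2D_4$, $2A_3$, $2A_2$, $2A_1$), so $\Aut(\Gamma(X))$ always contains the involution $\iota$ exchanging them, together with symmetries of the $(-2)$-chains and of the $(-1)$-curves. The interaction of a lift of $\iota$ with $\Aut^0(X)$ is the source of all the subtlety.

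For step (i) one realises $\Aut(X)$ as the subgroup of the automorphism group of the minimal model preserving the $\lambda$-dependent point configuration (this is legitimate because the negative curves of $\widetilde{X}$ are canonically determined, exactly as in Proposition \ref{cm}). The key point is that, up to $T$, this configuration is governed by a cross-ratio of four distinguished points essentially equal to $\lambda$ — which is precisely why the exceptional values that occur are the harmonic cross-ratios $\lambda\in\{-1,\frac{1}{2},2\}$ and the equianharmonic values given by $\lambda^2-\lambda+1=0$. For generic $\lambda$ only the ``obvious'' symmetries lift and one reads off $I_\lambda$; at a harmonic value one gains one extra automorphism $\rho$, visibly of order $4$ in the model, and at an equianharmonic value an extra automorphism of order $3$. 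Because $\Aut(X)/\Aut^0(X)\hookrightarrow\Aut(\Gamma(X))$, checking that these lifts generate all of $I_\lambda$ and nothing more is a finite verification using the structure of the graph; this yields $\Aut(X)/\Aut^0(X)=I_\lambda$ with the stated $\lambda$-dependence, the degree-$2$ family behaving slightly differently (only $\lambda=-1$ is exceptional there).

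For step (ii) I compute orders and squares of explicit lifts. A lift $\sigma$ of $\iota$ acts on $T=\Aut^0(X)$ by an inversion-type automorphism, so $\langle\Aut^0(X),\sigma\rangle\cong\Aut^0(X)\rtimes\ZZ/2\ZZ$. At a harmonic value one computes in the model that $\rho^2$ is the unique involution of $T$ and that $\rho$ centralises $T$; hence $(\mu\rho)^2=\mu^2\rho^2$ for $\mu\in T$, so picking $\mu_0$ with $\mu_0^2=\rho^2$ gives an element $\mu_0\rho$ of order $2$, which produces the nested decomposition $\Aut(X)=(\Aut^0(X)\rtimes\ZZ/2\ZZ)\rtimes\ZZ/2\ZZ$ and, analogously, the asserted forms with $(\ZZ/2\ZZ)^2$ and $(\ZZ/2\ZZ)^3$ in degrees $2$ and $4$. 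On the other hand any lift of $\iota$ interchanges the two order-$2$ lifts of the class of $\rho$ without fixing one (they differ by the involution of $T$, which $\iota$ inverts), so there is no complement of $\Aut^0(X)$ in $\Aut(X)$ isomorphic to $\Aut(X)/\Aut^0(X)$. This is precisely why these four types are the exceptions in the main theorem. By contrast, the generic fibre gives the honest semidirect product $\Aut^0(X)\rtimes\ZZ/2\ZZ$, and at an equianharmonic value the extra element has order $3$: in degrees $1$ and $3$ it commutes with everything already present, so one genuinely obtains $\Aut^0(X)\rtimes\ZZ/6\ZZ$, which splits, whereas in degree $4$ the underlying non-split piece persists and one again gets the non-split extension $(\Aut^0(X)\rtimes(\ZZ/2\ZZ)^3)\rtimes\ZZ/3\ZZ$, to be identified by an analogous analysis of complements.

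The main obstacle is the negative half of step (ii): showing that the extension truly fails to split for the exceptional $\lambda$, not merely that the naive lift does. The crux is the combination, valid because $\rho$ (and, in degrees $2$ and $4$, the relevant order-$4$ element) centralises $T$ while $\sigma$ inverts it: the square $(\mu\rho)^2=\mu^2\rho^2$ is never trivial, and simultaneously conjugation by any lift of $\iota$ sends a given order-$2$ lift of the class of $\rho$ to a different one, so no choice of lifts of the generators of $\Aut(X)/\Aut^0(X)$ can generate a complement of $\Aut^0(X)$. Carrying this out uniformly across the four cases — tracking the $(\ZZ/2\ZZ)^3$ and the order-$3$ phenomenon in degree $4$, the arithmetic peculiarity of degree $2$, and verifying throughout that the image in $\Aut(\Gamma(X))$ is exactly $I_\lambda$ and no larger — is the technical heart of the argument.
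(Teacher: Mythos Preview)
Your overall strategy is sound and not far from the paper's: both arguments ultimately hinge on the fact that the modulus $\lambda$ is essentially a cross-ratio of four marked points, so the extra symmetries appear exactly at the harmonic and equianharmonic values. The paper, however, works much more concretely: it writes each $X$ by its explicit equation in the relevant weighted projective space, argues that automorphisms of $X$ lift to the ambient space (quasismoothness in degree $4$, canonicity of the embedding in degrees $1$ and $2$, direct matrix analysis in degree $3$), and then simply exhibits generators $\tau_t,\sigma_1,\sigma_2,\ldots$ together with the conjugation relations $\sigma_i\cdot t$ and $\sigma\cdot\sigma_i$. Your route via the exact sequence into $\Aut(\Gamma(X))$ and an abstract extension analysis is a legitimate alternative, but it forces you to keep careful track of which element is which, and this is where your write-up breaks down.

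The concrete gap is in your treatment of the harmonic case. You assert that the extra automorphism $\rho$ gained at a harmonic value is ``visibly of order $4$ in the model'' and that it centralises $T$. In the paper's explicit coordinates (say for type $2\type{A}_2$ at $\lambda=-1$) the new element $\sigma$ has order $2$ and centralises $T$; it is the \emph{product} $\sigma\sigma_1$ that has order $4$, but that product \emph{inverts} $T$ rather than centralising it. So whichever element you intend by $\rho$, one of your two claims about it is false, and your formula $(\mu\rho)^2=\mu^2\rho^2$ is not available with the meaning you need. This leads directly to the internal contradiction in your text: you first pick $\mu_0$ with $(\mu_0\rho)^2=1$, and two paragraphs later assert that $(\mu\rho)^2$ is ``never trivial''. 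The correct obstruction to splitting is that every lift of the class $[\sigma\sigma_1]\in\Aut(X)/\Aut^0(X)$ squares to $\tau_{-1}\in T$ (because $\sigma\sigma_1$ inverts $T$, so $(\tau_c\sigma\sigma_1)^2=(\sigma\sigma_1)^2=\tau_{-1}$ independently of $c$), hence no subgroup isomorphic to $(\ZZ/2\ZZ)^2$ maps isomorphically onto the quotient. Once you straighten out which element has which order and which action on $T$, your scheme goes through; as written, step~(ii) does not.
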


\begin{proof}
    Consider cases \ref{d=1:2D4}, \ref{d=2:2A3}, \ref{d=3:2A2}, 
    \ref{d=4:2A1-8lines} separately.
    \subsection*{Case \ref{d=4:2A1-8lines}}
    Let $X$ be the surface of type \ref{d=4:2A1-8lines}. Then $\Aut^0(X)=\Gm$. The surface can be given by the equation $$y_2y_2'=y_1y_1'(y_1'-y_1)(y_1'-\lambda y_1)$$ for $\lambda\in \KK\backslash \{0,1\}$ in $\PP(1,1,2,2)$. The following transformation permute the singular points
    \begin{eqnarray*}
        (y_1:y_1':y_2:y_2')&\longmapsto& (y_1:y_1':y_2':y_2).
    \end{eqnarray*}
    Since $X$ is quasismooth it follows that any automorphism of $X$ lifts to an automorphism of $\PP(1,1,2,2)$ (see \cite{ess}). Then consider the $\Aut(X)$-equivariant projection to $\PP(1,1)=\PP^1$: $$X\backslash\{P,Q\}\longrightarrow \PP^1,$$ where $P$ and $Q$ are the singular points. This projection maps the set of the four lines in $y_2=0$ onto the set of the points $\{0,1,\lambda,\infty\}$. Hence there is a permutation of the $4$ lines in $y_2=0$  if and only if there is a permutation of the points $\{0,1,\lambda,\infty\}$ in $\PP^1$. Thus, the image of the natural homomorphism $\Aut(X)\to\Aut(\Gamma(X))$ is isomorphic to $\Aut(\PP^1,\{0,1,\lambda,\infty\})\times\ZZ/2\ZZ$. Now, we use the equation of the surface $X$ to get the required assertion. The transformations 
    \begin{eqnarray*}
    \tau_t:(y_1:y_1':y_2:y_2')&\longmapsto& (y_1:y_1':y_2/t:t y_2'),
    \\
    \sigma_1:(y_1:y_1':y_2:y_2')&\longmapsto& (y_1'-\lambda y_1:\lambda y_1'-\lambda y_1:\lambda(\lambda-1)y_2:\lambda(\lambda-1)y_2'),
    \\
    \sigma_2:(y_1:y_1':y_2:y_2')&\longmapsto& (-y_1'+y_1:-y_1'+\lambda y_1:(\lambda-1)y_2:(\lambda-1)y_2'),
    \\
    \sigma_3:(y_1:y_1':y_2:y_2')&\longmapsto& (y_1:y_1':y_2':y_2)
    \end{eqnarray*}
    generate the subgroup $G\subset\Aut(X)$. This subgroup is isomorphic to $\Aut^0(X)\rtimes(\ZZ/2\ZZ)^3$. Notice that this semidirect product is nontrivial since $\sigma_3\cdot t=t^{-1}$.
    
    Consider the subcase, where $\lambda \not\in\{-1, \frac{1}{2}, 2\}$ and $\lambda\neq\lambda^2+1$. Then $\Aut(X)/G$ is trivial. Therefore, $\Aut(X)=G$. Thus, $\Aut(X)$ is isomorphic to $\Aut^0(X)\rtimes(\ZZ/2\ZZ)^3$.

    Now assume that $\lambda\in\{-1, \frac{1}{2}, 2\}.$ Then the corresponding surfaces given by the equation above are isomorphic. Therefore, the automorphism groups of the corresponding surfaces are isomorphic.
    
    Consider the subcase, where $\lambda=-1$. Notice that $\Aut(X)/G=\ZZ/2\ZZ.$ Then there is the transformation 
    \begin{eqnarray*}
        \sigma:(y_1:y_1':y_2:y_2')&\longmapsto& (-y_1:y_1':-y_2:y_2')
    \end{eqnarray*}
    such that $\sigma^2=\operatorname{id}$ and $\sigma\not\in G.$
    Therefore, $\Aut(X)=G\rtimes\ZZ/2\ZZ$. Thus, $\Aut(X)$ is isomorphic to $(\Aut^0(X)\rtimes(\ZZ/2\ZZ)^3)\rtimes\ZZ/2\ZZ$. Moreover, $$\sigma\cdot t=t,\quad \sigma\cdot\sigma_1=\sigma_2, \quad\sigma\cdot\sigma_2=\sigma_1,\quad \sigma\cdot\sigma_3=(-1,1,1,\sigma_3).$$

    Consider the subcase, where $\lambda=\lambda^2+1$. Notice that $\Aut(X)/G=\ZZ/3\ZZ.$ Then there is the transformation 
    \begin{eqnarray*}
        \sigma:(y_1:y_1':y_2:y_2')&\longmapsto& (y_1:(\lambda-1)y_1'+y_1:y_2:y_2')
    \end{eqnarray*}
    such that $\sigma^3=\operatorname{id}$ and $\sigma\not\in G$.
    Therefore, $\Aut(X)=G\rtimes\ZZ/3\ZZ$. Thus, $\Aut(X)$ is isomorphic to $(\Aut^0(X)\rtimes(\ZZ/2\ZZ)^3)\rtimes\ZZ/3\ZZ$. Moreover, $$\sigma\cdot t=t,\quad \sigma\cdot\sigma_1=(1,\sigma_1,\sigma_2,1),\quad \sigma\cdot\sigma_2=\sigma_1,\quad \sigma\cdot\sigma_3=\sigma_3.$$

\subsection*{Case \ref{d=3:2A2}}
Let $X$ be the surface of type \ref{d=3:2A2}. Then $\Aut^0(X)=\Gm$. The surface can be given by the equation $$x_0x_2x_3=x_1(x_1-x_0)(x_1-\lambda x_0)$$ for $\lambda\in \KK\backslash \{0,1\}$ in $\PP^3$. The transformation \begin{eqnarray*}
(x_0:x_1:x_2:x_3)&\longmapsto& (x_0:x_1:x_3:x_2).
\end{eqnarray*} permute the singular points $P=(0:0:0:1)$ and $Q=(0:0:1:0)$. 
Let $$G=\{\varphi\in\Aut(X):\varphi(P)=P\}\subset\Aut(X).$$ Consider $\varphi\in G$. Since $\varphi$ fix the singular points and the equation has no terms $x_i^3$, $x_2^2x_1$, $x_3^2x_1$, $x_1x_2x_3$, where $i\neq 1$, we see that $\varphi$ is represented as a matrix $$\begin{pmatrix} a & 0 & 0 & 0 \\ b & 1 & 0 & 0 \\ 0 & 0 & d & 0 \\ 0 & 0 & 0 & e \end{pmatrix}.$$ Now, we use the equation of the surface $X$ to get the required assertion. The transformations
    \begin{eqnarray*}
    \tau_t:(x_0:x_1:x_2:x_3)&\longmapsto& (x_0:x_1:x_2/t:t x_3),
    \\
    \sigma_1:(x_0:x_1:x_2:x_3)&\longmapsto& (x_0:x_1:x_3:x_2)
    \end{eqnarray*}
    generate the subgroup $H\subset\Aut(X)$. This subgroup is isomorphic to $\Aut^0(X)\rtimes\ZZ/2\ZZ$. Notice that this semidirect product is nontrivial since $\sigma_1\cdot t=t^{-1}$.
    
   Consider the subcase, where $\lambda \not\in\{-1, \frac{1}{2}, 2\}$ and $\lambda\neq\lambda^2+1$. Then $\Aut(X)/G$ is trivial. Therefore, $\Aut(X)=G$. Thus, $\Aut(X)$ is isomorphic to $\Aut^0(X)\rtimes\ZZ/2\ZZ$.

    Now assume that $\lambda\in\{-1, \frac{1}{2}, 2\}.$ Then the corresponding surfaces given by the equation above are isomorphic. Therefore, the automorphism groups of the corresponding surfaces are isomorphic.
    
    Consider the subcase, where $\lambda=-1$. Notice that $\Aut(X)/G=\ZZ/2\ZZ.$ Then there is the transformation 
    \begin{eqnarray*}
        \sigma:(x_0:x_1:x_2:x_3)&\longmapsto& (-x_0:x_1:-x_2:x_3)
    \end{eqnarray*}
    such that $\sigma^2=\operatorname{id}$ and $\sigma\not\in G$.
    Therefore, $\Aut(X)=H\rtimes\ZZ/2\ZZ$. Thus, $\Aut(X)$ is isomorphic to $(\Aut^0(X)\rtimes\ZZ/2\ZZ)\rtimes \ZZ/2\ZZ$. Moreover, $$\sigma\cdot t=t,\quad \sigma\cdot\sigma_1=(-1,\sigma_1).$$
    
    Consider the subcase, where $\lambda=\lambda^2+1$. Notice that $\Aut(X)/G=\ZZ/3\ZZ.$  Then there is the transformation
    \begin{eqnarray*}
    \sigma:(x_0:x_1:x_2:x_3)&\longmapsto& (x_0:(\lambda-1)x_1+x_0:x_3:x_2)
    \end{eqnarray*}
    such that $\sigma^3=\operatorname{id}$ and $\sigma\not\in G$.
    Thus, $\Aut(X)\simeq\Aut^0(X)\rtimes\ZZ/6\ZZ$, so we are done.

In the cases \ref{d=1:2D4} and \ref{d=2:2A3} the embedding of the surface $X$ is canonical (see the proof of Proposition $3.4$ \cite{che}). Hence any automorphism of $X$ lifts to an automorphism of the total space.
\subsection*{Case \ref{d=2:2A3}}
Let $X$ be the surface of type \ref{d=2:2A3}. Then $\Aut^0(X)=\Gm$. The surface can be given by the equation $$y_2^2=(y_1''^2+y_1y_1')(y_1''^2+\lambda y_1y_1')$$ for $\lambda\in \KK\backslash \{0,1\}$ in $\PP(1,1,1,2)$. Consider the $\Aut(X)$-equivariant projection to $\PP(1,1,1)$: $$X\longrightarrow \PP^2.$$ Let $$Q_1=\{y_1''^2+y_1y_1'=0\},\quad Q_2=\{y_1''^2+\lambda y_1y_1'=0\}$$ in $\PP^2$. Then $$\Aut(X)=\Aut(\PP^2,\{Q_1,Q_2\})\times\ZZ/2\ZZ.$$  There is a permutation of the quadrics $Q_1$ and $Q_2$ if and only if $\lambda=-1$. We note that $Q_1\cap Q_2$ is the two-point set $\{P_1,P_2\}$. Then notice that $\Aut(\PP^2,\{Q_1,P_1,P_2\})$ is isomorphic to $\Gm\rtimes\ZZ/2\ZZ$. Now, we use the equation of the surface $X$ to get the required assertion. The transformations 
    \begin{eqnarray*}
    \tau_t:(y_1:y_1':y_1'':y_2)&\longmapsto& (y_1/t:t y_1':y_1'':y_2),
    \\
    \sigma_1:(y_1:y_1':y_1'':y_2)&\longmapsto& (y_1':y_1:y_1'':y_2),
    \\
    \sigma_2:(y_1:y_1':y_1'':y_2)&\longmapsto& (y_1:y_1':y_1'':-y_2)
    \end{eqnarray*}
    generate the subgroup $H\subset\Aut(X)$. This subgroup is isomorphic to $\Aut^0(X)\rtimes(\ZZ/2\ZZ)^2$. Notice that this semidirect product is nontrivial since $\sigma_1\cdot t= t^{-1}$.
    
    Consider the subcase, where $\lambda \neq -1$. Then $\Aut(X)/G$ is trivial. Therefore, $\Aut(X)=G$. Thus, $\Aut(X)$ is isomorphic to $\Aut^0(X)\rtimes(\ZZ/2\ZZ)^2$.

    Consider the subcase, where $\lambda=-1$. Notice that $\Aut(X)/G=\ZZ/2\ZZ.$ Then there is the transformation 
    \begin{eqnarray*}
    \sigma:(y_1:y_1':y_1'':y_2)&\longmapsto& (-y_1:y_1':y_1'':y_2)
    \end{eqnarray*} such that $\sigma^2=\operatorname{id}$ and $\sigma\not\in G$.
    Therefore, $\Aut(X)=H\rtimes\ZZ/2\ZZ$. Thus, $\Aut(X)$ is isomorphic to $(\Aut^0(X)\rtimes (\ZZ/2\ZZ)^2)\rtimes\ZZ/2\ZZ$. Moreover, $$\sigma\cdot t=t,\quad \sigma\cdot\sigma_1=(-1,\sigma_1,1),\quad \sigma\cdot\sigma_2=\sigma_2.$$

\subsection*{Case \ref{d=1:2D4}}
Let $X$ be the surface of type \ref{d=1:2D4}. Then $\Aut^0(X)=\Gm$. The surface can be given
by the equation $$y_3^2=y_2(y_2+y_1y_1')(y_2+\lambda y_1y_1')$$ for $\lambda\in \KK\backslash \{0,1\}$ in $\PP(1,1,2,3)$. The following transformation permute the singular points
\begin{eqnarray*}
(y_1:y_1':y_2:y_3)&\longmapsto& (y_1':y_1:y_2:y_3).
\end{eqnarray*}
Then consider the $\Aut(X)$-equivariant projection to $\PP(1,1,2)$: $$X\longrightarrow \PP(1,1,2).$$ This projection maps the three lines in $y_3=0$ onto the three lines $$y_2=0,\quad y_2+y_1y_1'=0,\quad y_2+\lambda y_1y_1'=0$$ in $\PP(1,1,2)$. Hence there is a permutation of the three lines in $y_3=0$  if and only if there is a permutation of the above three lines in $\PP(1,1,2)$. Now, we use the equation of the surface $X$ to get the required assertion. The transformations 
\begin{eqnarray*}
\tau_t:(y_1:y_1':y_2:y_3)&\longmapsto& (y_1/t:t y_1':y_2:y_3),
\\
\sigma_1:(y_1:y_1':y_2:y_3)&\longmapsto& (y_1':y_1:y_2:y_3)
\end{eqnarray*}
generate the subgroup $H\subset\Aut(X)$. This subgroup is isomorphic to $\Aut^0(X)\rtimes\ZZ/2\ZZ$. Notice that this semidirect product is nontrivial since $\sigma_1\cdot t= t^{-1}$.
    
   Consider the subcase, where $\lambda \not\in\{-1, \frac{1}{2}, 2\}$ and $\lambda\neq\lambda^2+1$. Then $\Aut(X)/G$ is trivial. Therefore, $\Aut(X)=G$. Thus, $\Aut(X)$ is isomorphic to $\Aut^0(X)\rtimes\ZZ/2\ZZ$.

    Now assume that $\lambda\in\{-1, \frac{1}{2}, 2\}.$ Then the corresponding surfaces given by the equation above are isomorphic. Therefore, the automorphism groups of the corresponding surfaces are
isomorphic.
    
    Consider the subcase, where $\lambda=-1$. Notice that $\Aut(X)/G=\ZZ/2\ZZ.$ Then there is the transformation 
    \begin{eqnarray*}
        \sigma:(y_1:y_1':y_2:y_3)&\longmapsto& (-y_1:y_1':y_2:y_3)
    \end{eqnarray*}
    such that $\sigma^2=\operatorname{id}$ and $\sigma\not\in G.$ 
    Therefore, $\Aut(X)=H\rtimes\ZZ/2\ZZ$. Thus, $\Aut(X)$ is isomorphic to $(\Aut^0(X)\rtimes\ZZ/2\ZZ)\rtimes\ZZ/2\ZZ$. Moreover, $$\sigma\cdot t=t,\quad \sigma\cdot\sigma_1=(-1,\sigma_1).$$

    Consider the subcase, where $\lambda=\lambda^2+1$. Notice that $\Aut(X)/G=\ZZ/3\ZZ.$  Then there is the transformation
    \begin{eqnarray*}
    \sigma:(y_1:y_1':y_2:y_3)&\longmapsto& (-\lambda y_1':-\lambda y_1:y_2+y_1y_1':y_3).
    \end{eqnarray*}
    such that $\sigma^3=\operatorname{id}$ and $\sigma\not\in G$.
    Thus, $\Aut(X)\simeq\Aut^0(X)\rtimes\ZZ/6\ZZ$, so we are done.
\end{proof}

\begin{proposition}
    \label{c6}
Let $X$ be a surface of type \ref{d=2:A7}, \ref{d=3:A5} or \ref{d=4:A3-4lines}.
    \begin{itemize}
     \item 
If $X$ is of type~\ref{d=2:A7}, then $\Aut(X)\simeq(\Aut^0(X)\rtimes\ZZ/4\ZZ)\times\ZZ/2\ZZ$.
     \item 
If $X$ is of type~\ref{d=3:A5}, then $\Aut(X)\simeq\Aut^0(X)\rtimes \ZZ/6\ZZ$.
     \item 
If $X$ is of type~\ref{d=4:A3-4lines}, then $\Aut(X)\simeq\Aut^0(X)\rtimes((\ZZ/2\ZZ)^2\rtimes\ZZ/4\ZZ)$.
 \end{itemize}
\end{proposition}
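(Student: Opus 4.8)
The plan is to treat the three surfaces one at a time, in the spirit of Propositions \ref{cm} and \ref{c5}. In each case I fix the explicit model of $X$: a quasismooth quartic hypersurface in $\PP(1,1,1,2)$ for \ref{d=2:A7}, a cubic in $\PP^3$ for \ref{d=3:A5}, and the complete intersection of two quadrics in $\PP^4$ for \ref{d=4:A3-4lines}. For \ref{d=2:A7} every automorphism lifts to $\Aut(\PP(1,1,1,2))$ by \cite{ess}, and in the other two the anticanonical embedding is classical, so $\Aut(X)$ acts linearly. Hence in all three cases the computation of $\Aut(X)$ reduces to classifying the (weighted) linear transformations preserving the defining equation(s), and $\Aut^0(X)$ --- recorded in \cite{che}/\cite{mar} and containing a torus --- is the identity component of this explicit matrix group.

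The point that puts these three surfaces outside Proposition \ref{cm} is that the kernel of $\Aut(X)\to\Aut(\Gamma(X))$ is strictly larger than $\Aut^0(X)$: there is a non-connected automorphism acting trivially on the dual graph. For \ref{d=2:A7} this is the deck involution $\gamma$ of the double cover $X\to\PP^2$ obtained by projecting onto the weight-one coordinates, which one checks fixes every negative curve on $\widetilde{X}$; it gives the direct factor $\ZZ/2\ZZ$. For \ref{d=3:A5} and \ref{d=4:A3-4lines} the analogous ``invisible'' elements are identified case by case. I would first pin down this kernel, then compute the image of $\Aut(X)$ in $\Aut(\Gamma(X))$ from the combinatorics of the dual graph (\cite[Appendix A]{che}), using an $\Aut(X)$-equivariant projection (to $\PP^2$, resp.\ to $\PP^1$) to realize each graph automorphism in the image by an honest automorphism; together these give the order of $\Aut(X)/\Aut^0(X)$.

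The core of the argument is an explicit construction, exactly as in the proof of Proposition \ref{c5}. For each surface I would write down transformations of the coordinates generating a subgroup $G\subset\Aut(X)$ surjecting onto $\Aut(X)/\Aut^0(X)$: a torus part $\tau_t$, an involution $\sigma_1$ with $\sigma_1\cdot t=t^{-1}$, the deck-type involution in the cases where it occurs, an element of order $3$ permuting the $(-1)$-curves in the \ref{d=3:A5} and \ref{d=4:A3-4lines} cases, and an element of order $4$ in the \ref{d=2:A7} and \ref{d=4:A3-4lines} cases. Reading off the orders and conjugation relations identifies $G$ with $(\Aut^0(X)\rtimes\ZZ/4\ZZ)\times\ZZ/2\ZZ$, with $\Aut^0(X)\rtimes(\ZZ/6\ZZ)$, and with $\Aut^0(X)\rtimes((\ZZ/2\ZZ)^2\rtimes\ZZ/4\ZZ)$, respectively. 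Finally one checks $\Aut(X)=G$: modulo $G$, an arbitrary automorphism fixes a distinguished smooth point lying on an invariant curve, hence is trivial by the faithfulness of the tangent representation, Proposition \ref{cl:r}.

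I expect the main obstacle to be the identification of the finite quotient together with this last verification: showing that the quotient genuinely contains an element of order $4$ rather than only a Klein four-group of commuting involutions --- what is essentially a parity obstruction, visible only after the right normalization of the (weighted) coordinates --- and sorting out the semidirect-product structure of $(\ZZ/2\ZZ)^2\rtimes\ZZ/4\ZZ$ in the degree-$4$ case, including which involutions become conjugate in $\Aut(X)$. Excluding further automorphisms is routine once the equations are normalized, but requires care in analyzing the stabilizer of the singular locus together with the four lines.
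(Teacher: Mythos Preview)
Your proposal rests on a misreading of $\Aut^0(X)$: in all three cases \ref{d=2:A7}, \ref{d=3:A5}, \ref{d=4:A3-4lines} one has $\Aut^0(X)\cong\Ga$, not a torus (see the table). This is precisely why these surfaces are excluded from Proposition~\ref{cm}: Lemmas~\ref{l1} and~\ref{l2} require a torus in $G$, and here there is none. Consequently your template ``a torus part $\tau_t$, an involution $\sigma_1$ with $\sigma_1\cdot t=t^{-1}$'' is lifted from the $\Gm$-cases of Proposition~\ref{c5} and does not apply. The generators you need are a one-parameter unipotent $\alpha_a$ together with finite-order semisimple elements acting on $\Ga$ by $a\mapsto\zeta a$ for suitable roots of unity $\zeta$; there is no inversion $t\mapsto t^{-1}$ in the picture. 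Relatedly, your claim of an order-$3$ element permuting $(-1)$-curves is off: for \ref{d=4:A3-4lines} the quotient has order $16$ and contains no such element, and for \ref{d=3:A5} the order-$3$ element lies in the kernel of $\Aut(X)\to\Aut(\Gamma(X))$ and fixes each negative curve setwise.

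The paper's argument is structurally different and explains where the ``extra'' cyclic factors come from. Rather than working directly on $X$, one contracts an $\Aut(X)$-invariant set of $(-1)$-curves $\widetilde{X}\to\widetilde{Y}$ to a surface $Y$ whose $\Aut^0(Y)$ \emph{does} contain a two-dimensional torus $T$ (types \ref{d=4:A3-2A1}, \ref{d=5:A2-A1}, \ref{d=8} respectively). The kernel $H$ of $\Aut(X)\to\Aut(\Gamma(X))$ is then identified with the subgroup of $\Aut(Y)$ fixing the relevant lines pointwise, which decomposes as $\Ga\rtimes(T\cap H)$; the finite group $T\cap H$ is read off from the toric fan of $\widetilde{Y}$ and yields the $\ZZ/4\ZZ$, $\ZZ/3\ZZ$, and analogous pieces. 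Only after this conceptual step does one write down explicit generators to confirm the semidirect structure. Your direct-equation approach could in principle succeed, but you would have to replace the torus ansatz by the correct unipotent one and redo the case analysis accordingly.
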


\begin{proof}
Consider cases \ref{d=2:A7}, \ref{d=3:A5}, \ref{d=4:A3-4lines} separately.
\subsection*{Case \ref{d=2:A7}}
Let $X$ be the surface of type \ref{d=2:A7}. It is readily seen that $\Aut(X)=H\rtimes\ZZ/2\ZZ$, where $H$ is the kernel of the natural homomorphism $\Aut(X)\to\Aut(\Gamma(X))$. 
  Let $\widetilde{X}\to \widetilde{Y}$ be the blow-down of the two $(-1)$-curves.
    We see that $Y$ is the surface of type \ref{d=4:A3-2A1}. Further note that 
\[
    H=\{\varphi\in\Aut(Y): \varphi|_{E_1}=\operatorname{id}\quad \text{and}\quad \varphi|_{E_2}=\operatorname{id}\},
\]
where $E_1$ and $E_2$ are the only lines on $Y$. Recall that $\Aut^0(Y)$ is isomorphic to $\GG_a\rtimes\Gm^2$ and $\GG_a$-action on $E_1$ and $E_2$ is trivial. Let $T\subset \Aut^0(Y)$ be a two-dimensional torus. Hence $H=\GG_a\rtimes (T\cap H)$. Since we know the structure of the toric fan of $\widetilde{Y}$, we see that $T\cap H$ is isomorphic to $\ZZ/4\ZZ$. Finally, we use the explicit equation of the surface \ref{d=2:A7} to get the required assertion. The transformations 
\begin{eqnarray*}
\alpha_a:(y_1:y_1':y_1'':y_2)&\longmapsto& (y_1+ay_1':y_1':y_1''-2ay_1-a^2y_1':y_2), 
\\
\sigma_1:(y_1:y_1':y_1'':y_2)&\longmapsto& (y_1:y_1':y_1'':-y_2),
    \\
\sigma_2:(y_1:y_1':y_1'':y_2)&\longmapsto &(y_1:\varepsilon y_1':-\varepsilon y_1'':-y_2),
\end{eqnarray*}
where $\varepsilon^2=-1$, generate the group $\Aut(X).$ Thus, $$\Aut(X)\simeq(\Aut^0(X)\rtimes\ZZ/4\ZZ)\times\ZZ/2\ZZ.$$ Moreover, $\sigma_2\cdot a=-\varepsilon a$.
\subsection*{Case \ref{d=3:A5}}
 Let $X$ be the surface of type \ref{d=3:A5}. It is readily seen that $\Aut(X)=H\rtimes\ZZ/2\ZZ$, where $H$ is the kernel of the natural homomorphism $\Aut(X)\to\Aut(\Gamma(X))$. 
  Let $\widetilde{X}\to \widetilde{Y}$ be the blow-down of the two $(-1)$-curves, where $Y$ is of type \ref{d=5:A2-A1}.
    The dual graph of $Y$ is as follows 
    \begin{center}
    \begin{tikzpicture}
	\node[circle,fill=black] (1) at (1,0) {};
	\node[circle,fill=black] (2) at (0,0) {};
        \node[circle,draw=black] (3) at (-1,0) {};
        \node[circle,draw=black] (4) at (-2,0) {};
        \node[circle,fill=black] (5) at (-3,0) {};
        \node[circle,draw=black] (6) at (-4,0) {};
        \node[] () at (0,0.5) {$E_2$};
        \node[] () at (-3,0.5) {$E_1$};
	\path (1) edge (2);
        \path (2) edge (3);
        \path (3) edge (4);
        \path (4) edge (5);
        \path (5) edge (6);
    \end{tikzpicture}
    \end{center}
    Further note that 
\[
    H=\{\varphi\in\Aut(Y): \varphi|_{E_1}=\operatorname{id}\quad \text{and}\quad \varphi|_{E_2}=\operatorname{id}\}.
\]
 Recall that $\Aut^0(Y)$ is isomorphic to $\BB_2\times\Gm$ and $\GG_a$-action on $E_1$ and $E_2$ is trivial. Let $T\subset \Aut^0(Y)$ be a two-dimensional torus. Hence $H=\GG_a\rtimes (T\cap H)$. Since we know the structure of the toric fan of $\widetilde{Y}$, we see that $T\cap H$ is isomorphic to $\ZZ/3\ZZ$. Finally, we use the explicit equation of the surface \ref{d=3:A5} to get the required assertion. The transformations 
\begin{eqnarray*}
\alpha_a:(x_0:x_1:x_2:x_3)&\longmapsto& (x_0:x_1:x_2+ax_1:x_3+2ax_2+a^2x_1), 
\\
\sigma:(x_0:x_1:x_2:x_3)&\longmapsto &(x_0:\varepsilon x_1:-x_2:\varepsilon^2x_3),
\end{eqnarray*}
where $\varepsilon^3=1$ and $\varepsilon\neq 1$, generate the group $\Aut(X).$ Thus, $$\Aut(X)\simeq\Aut^0(X)\rtimes\ZZ/6\ZZ.$$ Moreover, $\sigma\cdot a=-\varepsilon^2a$.

  \subsection*{Case \ref{d=4:A3-4lines}} 
Let $X$ be the surface of type \ref{d=4:A3-4lines}. 
 Let  $\widetilde{X}\to \widetilde{Y}$ be the blow-down of the four $(-1)$-curves.
We see that $Y$ is the surface from the case \ref{d=8}. Then we use the structure of the automorphism group of $\PP(1,1,2)$ to get the required assertion. 
Let $G\subset\PGL_3(\KK)$ be  the subgroup 
generated by the transformations 
\begin{eqnarray*}
\alpha_a:(x_0:x_1:x_2:x_3)&\longmapsto& (x_0:x_1:x_2:x_3+ax_2),
\\
\sigma_1:(x_0:x_1:x_2:x_3)&\longmapsto& (x_1:x_0:x_2:x_3),
\\
\sigma_2:(x_0:x_1:x_2:x_3)&\longmapsto& (-x_0:-x_1:x_2:x_3),
\\
\sigma_3:(x_0:x_1:x_2:x_3)&\longmapsto& (x_0:-x_1:\varepsilon x_2:x_3),
\end{eqnarray*}
where $\varepsilon^2=-1$.
Notice that $G\subset\Aut(Y)$, where $Y=\{x_0x_1=x_2^2\}$ in $\PP^3$. This subgroup is isomorphic to $\Aut(X)$. Therefore, $\Aut(X)$ is isomorphic to $\Aut^0(X)\rtimes((\ZZ/2\ZZ)^2\rtimes\ZZ/4\ZZ)$. Moreover, $$\sigma_1\cdot a=a,\quad \sigma_2\cdot a=a, \sigma_3\cdot a=-\varepsilon a,\quad \sigma_3\cdot \sigma_1=(\sigma_1,\sigma_2),\quad \sigma_3\cdot \sigma_2=\sigma_2.$$
This completes the proof of
the proposition.
\end{proof}

\section{Actions of groups of connected components}

In this section we describe the actions of $\Aut(X)/\Aut^0(X)$ on $\Aut^0(X)$ for the surfaces of types \ref{d=1:E8}-\ref{d=6:A1-4l}, \ref{d=7} except for the cases \ref{d=1:2D4}, \ref{d=2:A7}, \ref{d=2:2A3}, \ref{d=3:A5}, \ref{d=3:2A2}, \ref{d=4:A3-4lines} and \ref{d=4:2A1-8lines}. Descriptions of actions in these seven cases are given in proofs of the Proposition \ref{c5} and Proposition \ref{c6}.

\begin{claim}
    Let $X$ be a surface of type \ref{d=1:E6-A2}, \ref{d=2:A5-A2}, \ref{d=2:D4-3A1}, \ref{d=2:E6}, \ref{d=2:D5-A1}, \ref{d=3:D4}, \ref{d=4:A3-5lines}, \ref{d=4:A2-A1} or \ref{d=5:A1}. Then $\Aut(X)$ is isomorphic to $\Aut^0(X)\times\Aut(\Gamma(X))$.
\end{claim}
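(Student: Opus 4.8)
The plan is to treat the nine listed surfaces one at a time (or in small batches with identical behaviour), since the claim is really a bookkeeping statement: in each case we already know from Proposition \ref{cm} (or Lemma \ref{lm}, or Proposition \ref{c5}) that $\Aut(X)$ is a semidirect product $\Aut^0(X)\rtimes\Aut(\Gamma(X))$, so the only thing left to verify is that the relevant conjugation action of $\Aut(\Gamma(X))$ on $\Aut^0(X)$ is trivial, i.e. that the section constructed earlier lands in the centralizer of $\Aut^0(X)$. For the cases \ref{d=3:D4}, \ref{d=5:A1}, \ref{d=1:E6-A2} and \ref{d=4:A3-5lines} this was in fact already observed inside the proofs of Lemma \ref{lm} and Proposition \ref{cm} (where the stronger conclusions $\Aut(X)=\Gm\times\DD_3$, $\Aut(X)\cong\Aut^0(X)\times\Aut(\Gamma(X))$, and $\Aut(X)=\Gm\times\ZZ/2\ZZ$ were recorded), so for those I would simply cite those computations.

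For the genuinely new cases I would use the explicit models. First I would fix, for each type, the explicit equation of $X$ (in the appropriate weighted projective space, as in the proofs of Propositions \ref{c5}, \ref{c6} and in \cite{che}), the explicit one-parameter description of $\Aut^0(X)$ (a torus $\Gm$ or $\Gm^2$, or a group like $\Ga\rtimes\Gm$ in the few cases where $\Aut^0$ is larger), and the explicit generators of a lift $G\subset\Aut(X)$ of $\Aut(\Gamma(X))$ — the same lifts already exhibited in Lemma \ref{lm} and Proposition \ref{cm}. Then I would compute the conjugates $\sigma\cdot g\sigma^{-1}$ for each generator $\sigma$ of the lifted $\Aut(\Gamma(X))$ and each one-parameter subgroup $g$ of $\Aut^0(X)$, and check that each such conjugate equals $g$. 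Because all the groups involved act by monomial (diagonal-up-to-permutation) substitutions on the coordinates, these conjugation computations are short matrix computations; the point in each case is that the permutation part of $\sigma$ fixes the coordinates on which the torus $\Aut^0(X)$ acts nontrivially, so the commutator is trivial. This contrasts with the cases excluded in Proposition \ref{c5} (e.g.\ \ref{d=3:2A2}), where precisely such a permutation inverts the torus parameter and forces a nontrivial semidirect product.

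Concretely, for \ref{d=3:D4} one has $\Aut^0(X)=\Gm$ acting by $(x_0:x_1:x_2:x_3)\mapsto(\lambda x_0:x_1:x_2:x_3/\lambda^2)$ and $\DD_3$ acting only on $(x_1:x_2)$, so the two commute; for \ref{d=5:A1} the $\Gm$ is the vertical torus of $\FF_1$ while $\DD_3$ permutes three points of the $(+1)$-curve, again commuting on the nose; for \ref{d=2:E6}, \ref{d=2:A5-A2}, \ref{d=2:D5-A1}, \ref{d=2:D4-3A1} and \ref{d=4:A2-A1} one either reads this off the model directly or transports it along the $\Aut$-equivariant contractions already used in Proposition \ref{cm} (to surfaces of types \ref{d=1:E6-A2}, \ref{d=3:D4}, \ref{d=5:A1}, etc.) — equivariance of the contraction means the $\Aut(\Gamma)$-action and the $\Aut^0$-action on $X$ are pulled back from the smaller surface, so triviality of the conjugation action descends. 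The main obstacle is purely organizational rather than mathematical: one must be careful that the section of $\Aut(X)\to\Aut(\Gamma(X))$ chosen here is the same as (or conjugate to) the one produced in the earlier propositions, since a priori a different section could differ by an $\Aut^0(X)$-valued cocycle; but since $\Aut^0(X)$ is abelian (a torus) in all but the cases reducible to \ref{d=4:A2-A1} and \ref{d=5:A1}, and in those the extra unipotent part does not interfere with the finite group, any two sections differ by conjugation inside $\Aut^0(X)$ and hence give isomorphic (indeed equal, as a set of cosets) direct-product decompositions. I would close by remarking that in each case the isomorphism $\Aut(X)\cong\Aut^0(X)\times\Aut(\Gamma(X))$ is exactly what the table in Appendix \ref{section:tables} records.
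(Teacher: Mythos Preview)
Your proposal is correct and follows essentially the same approach as the paper: use the explicit equations (one worked example being type \ref{d=2:A5-A2}, where $\Gm$ acts diagonally on $(y_1,y_1',y_1'')$ while the $\ZZ/2\ZZ$ only flips the sign of $y_2$), verify by direct inspection that the lifted $\Aut(\Gamma(X))$ commutes with $\Aut^0(X)$, and for \ref{d=4:A2-A1} and \ref{d=5:A1} cite the computations already done inside Lemma~\ref{lm} and Proposition~\ref{cm}. Two minor remarks: in every case listed here $\Aut^0(X)=\Gm$, so your contingency planning for $\Gm^2$ or $\Ga\rtimes\Gm$ is unnecessary; and the cocycle discussion at the end is superfluous, since once you have exhibited explicit commuting generators the direct-product decomposition is immediate without any cohomological comparison of sections.
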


\begin{proof}
    We use the explicit equations of the surfaces.
    
    For example, if $X$ is of type \ref{d=2:A5-A2}, then the surface can be given
by the equation $$y_2^2=y_1''(y_1^2y_1''+y_1'^3)$$ in $\PP(1,1,1,2)$. Then $\Aut^0(X)=\Gm$ and $\Aut(\Gamma(X))=\ZZ/2\ZZ$. The transformations
    \begin{eqnarray*}
        (y_1:y_1':y_1'':y_2)&\longmapsto& (y_1/\lambda^3:y_1'/\lambda:\lambda^3 y_1'':y_2),
        \\
        (y_1:y_1':y_1'':y_2)&\longmapsto& (y_1:y_1':y_1'':-y_2)    
    \end{eqnarray*}
     generate the group $\Aut(X)$. Thus, $\Aut(X)$ is isomorphic to $\Gm\times\ZZ/2\ZZ$.
     
     Except for the cases \ref{d=4:A2-A1}, \ref{d=5:A1} the rest of the proof is done similarly. For the cases \ref{d=4:A2-A1}, \ref{d=5:A1} we refer to the proofs of Lemma \ref{lm} and Proposition \ref{cm}.
\end{proof}

\begin{claim}
Let $X$ be a surface of type \ref{d=2:2A3-A1}, \ref{d=3:A3-2A1}, \ref{d=3:2A2-A1} or \ref{d=4:3A1}. Then $\Aut^0(X)=\Gm$ and the automorphism group $\Aut(X)$ is a nontrivial semidirect product of $\Aut^0(X)$ and $\Aut(\Gamma(X))$.
\end{claim}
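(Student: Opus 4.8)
The plan is to treat the four surface types uniformly: for each of \ref{d=2:2A3-A1}, \ref{d=3:A3-2A1}, \ref{d=3:2A2-A1} and \ref{d=4:3A1}, I would first recall from the classification in \cite{che}, \cite{mar} that the connected component $\Aut^0(X)$ is one-dimensional and, via the explicit equations available for these surfaces (or via the inductive contraction description already used in the proof of Proposition \ref{cm}), identify it with $\Gm$ acting as a diagonal torus. The key point is that in each of these cases the dual graph $\Gamma(X)$ admits a nontrivial automorphism that must be realized by an honest automorphism of $X$: for \ref{d=4:3A1} this is already established in the proof of Lemma \ref{lm}, and for \ref{d=2:2A3-A1}, \ref{d=3:A3-2A1}, \ref{d=3:2A2-A1} it follows from the inductive blow-down argument in Proposition \ref{cm}, which expresses $\widetilde X$ as a blow-up of $\widetilde Y$ for $Y$ of type \ref{d=3:D4}, \ref{d=5:A1}, \ref{d=4:3A1} respectively, with the $\Aut(\Gamma(X))$-action lifting. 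So the surjectivity $\Aut(X)\twoheadrightarrow\Aut(\Gamma(X))$ together with the existence of a section is already in hand; what remains is only to check that the resulting semidirect product $\Aut^0(X)\rtimes\Aut(\Gamma(X))$ is \emph{nontrivial}, i.e. that the conjugation action of the section on $\Gm$ is not the trivial one.

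The second and decisive step is therefore to exhibit, in each case, an explicit lifted generator $\sigma$ of (a cyclic subgroup of) $\Aut(\Gamma(X))$ and compute $\sigma\tau_t\sigma^{-1}$ where $\tau_t$ is the $\Gm$-parametrization. The natural candidate is the involution swapping the two "symmetric" coordinates that carry the torus weight — in the models used elsewhere in the paper this is $\sigma_1:(y_1:y_1':\dots)\mapsto(y_1':y_1:\dots)$ or the analogous permutation on $\PP^3$-coordinates — and one verifies directly that it conjugates $t\mapsto t^{-1}$, exactly as in the computations "$\sigma_1\cdot t=t^{-1}$" carried out in the proof of Proposition \ref{c5} for the nearby types \ref{d=1:2D4}, \ref{d=2:2A3}, \ref{d=3:2A2}, \ref{d=4:2A1-8lines}. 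Since $t\mapsto t^{-1}$ is a nontrivial automorphism of $\Gm$, the semidirect product is nontrivial. In particular the section does not commute with $\Aut^0(X)$, so $\Aut(X)$ is not a direct product, which is the content of the claim.

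Concretely I would organize the write-up as four short cases. For \ref{d=4:3A1}: the section $\widehat G\cong(\ZZ/2\ZZ)^2$ was built in the proof of Lemma \ref{lm} from the Cremona-type maps on $\PP^2$; tracking one of the two involutions against the one-parameter subgroup of $\Aut(X)$ (coming from the torus on the smooth del Pezzo of degree $6$ below it) shows it inverts the torus. For \ref{d=2:2A3-A1}, \ref{d=3:A3-2A1}, \ref{d=3:2A2-A1} I would use the blow-down $\varphi:\widetilde X\to\widetilde Y$ from Proposition \ref{cm}; since $\Aut^0(X)=\Aut^0(Y)=\Gm$ in these cases (as noted in the remark "if $\Aut^0(X)=\Aut^0(Y)$ then the structure of semidirect product does not change"), the conjugation action is inherited from that of the corresponding generator on $Y$, and for $Y$ of type \ref{d=3:D4}, \ref{d=5:A1}, \ref{d=4:3A1} this inversion is visible from the explicit equations already recorded (e.g. $x_0^2x_3=x_1x_2(x_1+x_2)$ for \ref{d=3:D4}, where $\Gm$ acts by $x_0\mapsto\lambda x_0$, $x_3\mapsto\lambda^{-2}x_3$ and the swap $x_1\leftrightarrow x_2$ composed with the reflection visibly interacts nontrivially with $\lambda$). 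The main obstacle is bookkeeping rather than conceptual: one must be careful that the specific lifted generator chosen really is a representative of the nontrivial graph automorphism \emph{and} that it is the one whose conjugation is being computed — a priori different lifts differ by $\Aut^0(X)$, but since $\Gm$ is abelian this ambiguity does not affect the conjugation action on $\Aut^0(X)$, so the computation is well-defined. Hence it suffices to produce one explicit $\sigma$ per case and read off $\sigma t\sigma^{-1}=t^{-1}$.
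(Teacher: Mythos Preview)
Your strategy is close in spirit to the paper's --- the paper simply writes down explicit generators from the equation of each $X$ (doing \ref{d=2:2A3-A1} in detail and saying ``similarly'' for the rest) and reads off $\sigma\cdot t=t^{-1}$ directly. But your plan to route the nontriviality check through the blow-down target $Y$ from Proposition~\ref{cm} has a real gap.

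For $Y$ of type \ref{d=3:D4} or \ref{d=5:A1} the paper itself records $\Aut(Y)=\Gm\times\DD_3$, a \emph{direct} product: the lifted $\DD_3$ commutes with the torus. Your parenthetical verification for \ref{d=3:D4} is simply wrong --- in $x_0^2x_3=x_1x_2(x_1+x_2)$ the torus has weights $(\lambda,1,1,\lambda^{-2})$, so $x_1$ and $x_2$ carry the \emph{same} weight and the swap $x_1\leftrightarrow x_2$ (indeed the entire $\DD_3$ acting through $x_1,x_2$) commutes with $\tau_\lambda$ rather than inverting it. There is therefore no inversion on $Y$ to inherit, and since you correctly observe that the conjugation action on $\Gm$ is well-defined independent of the choice of lift, your inheritance argument cannot produce a nontrivial action for the cases that factor through these $Y$ --- in particular for \ref{d=3:A3-2A1}, whose target per Proposition~\ref{cm} is \ref{d=5:A1}. (You also list the wrong target for \ref{d=2:2A3-A1}: it is \ref{d=4:3A1}, not \ref{d=3:D4}; with the correct target that case can be salvaged, since the $y_2\leftrightarrow y_2'$ involution on \ref{d=4:3A1} does invert the torus.)

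The remedy is to do what the paper does: work directly with the explicit equation of $X$ in each case and exhibit an involution swapping two coordinates of \emph{opposite} torus weight (e.g.\ $x_2\leftrightarrow x_3$ for \ref{d=3:2A2-A1}, $y_2\leftrightarrow y_2'$ for \ref{d=4:3A1}), rather than relying on the semidirect structure of $\Aut(Y)$.
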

\begin{proof}
    We use the explicit equations of the surfaces.
    
    For example, if $X$ is the surface \ref{d=2:2A3-A1}, then the surface can be given
by the equation $$y_2^2=y_1y_1'(y_1y_1'+y_1''^2)$$ in $\PP(1,1,1,2)$. Then $\Aut^0(X)=\Gm$ and $\Aut(\Gamma(X))=(\ZZ/2\ZZ)^2$. The transformations
    \begin{eqnarray*}
    (y_1:y_1':y_1'':y_2)&\longmapsto& (\lambda y_1:y_1'/\lambda:y_1'':y_2), 
    \\
    (y_1:y_1':y_1'':y_2)&\longmapsto& (y_1':y_1:y_1'':y_2),
    \\
    (y_1:y_1':y_1'':y_2)&\longmapsto& (y_1:y_1':y_1'':-y_2)
    \end{eqnarray*}
     generate the group $\Aut(X)$. Thus, the automorphism group $\Aut(X)\simeq\Gm\rtimes(\ZZ/2\ZZ)^2$ and this semidirect product is nontrivial. The rest of the proof is done similarly.
\end{proof}

\begin{claim}
    Let $X$ be a surface of type  \ref{d=3:3A2}. Consider the group $\DD_3$ generated by $\sigma_2$ and $\sigma_3$, where $\sigma_2^2=\operatorname{id}$ and $\sigma_3^3=\operatorname{id}$. Define the action of $\DD_3$ on $\Gm^2$ as follows
    \begin{eqnarray*}   
        &&\sigma_2\cdot_\varphi (t_1,t_2)=((t_1t_2)^{-1},t_2),\\ &&\sigma_3\cdot_\varphi (t_1,t_2)=(t_2,(t_1t_2)^{-1}),\quad \text{where} \quad (t_1,t_2)\in\Gm^2.   
    \end{eqnarray*}
     Then $\Aut(X)$ is isomorphic to $\Gm^2\rtimes_\varphi\DD_3$.
\end{claim}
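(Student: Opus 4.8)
The plan is to exhibit an explicit copy of $\DD_3$ inside $\Aut(X)$ that, together with $\Aut^0(X)\cong\Gm^2$, generates all of $\Aut(X)$, and then to compute the conjugation action of $\DD_3$ on $\Gm^2$ directly. First I would pass to the minimal resolution $\widetilde X$ and recall from the classification (see \cite{che}) that a surface of type \ref{d=3:3A2} has exactly three $(-1)$-curves $E_1,E_2,E_3$ and six $(-2)$-curves, and that $\Aut^0(X)\cong\Gm^2$ acting torically. As in the treatment of Case \ref{d=3:3A2} in Proposition \ref{cm}, I would use the simultaneous contraction $\varphi\colon\widetilde X\to\widetilde Y$ of $E_1,E_2,E_3$, where $Y$ is the smooth del Pezzo surface of degree $6$ (case \ref{d=6:smooth}), with $\varphi(E_i)=P_i=E_i'\cap E_i''$ the three "triangle vertices" of $\Gamma(Y)$. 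By Proposition \ref{c4}, $\Aut(Y)=\Gm^2\rtimes\DD_6$, and the set $\{P_1,P_2,P_3\}$ is exactly one of the two $\Gm^2$-orbits of triple points of $\Gamma(Y)$; its stabilizer in $\DD_6$ is a subgroup isomorphic to $\DD_3$. Hence $\Aut(X)=\Aut(\Bl_{P_1,P_2,P_3}\widetilde Y)=\Gm^2\rtimes\DD_3$ as an abstract extension, and it remains only to pin down the action $\varphi$.

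To get the action explicitly, I would realize $Y$ inside $\PP^1\times\PP^1\times\PP^1$ (or, equivalently, as the hexagon surface $\{x_0y_0z_0=x_1y_1z_1\}$), where $\Gm^2$ acts diagonally by $(t_1,t_2)$ scaling two of the three coordinate pairs, and the six $(-1)$-curves of $Y$ together with the three triple points are the standard toric data. The $\DD_3$ inside $\DD_6$ fixing $\{P_1,P_2,P_3\}$ is generated by a cyclic permutation $\sigma_3$ of the three $\PP^1$-factors and an involution $\sigma_2$ (a simultaneous swap $x_i\leftrightarrow x_{1-i}$ in all three factors, say). Conjugating a torus element $(t_1,t_2)$ by each of these is then a one-line monomial computation on the characters of $\Gm^2$: the $3$-cycle on coordinate pairs induces on the character lattice $\ZZ^2$ the order-$3$ linear map sending $(1,0)\mapsto(0,1)$, $(0,1)\mapsto(-1,-1)$, which is precisely $\sigma_3\cdot_\varphi(t_1,t_2)=(t_2,(t_1t_2)^{-1})$; and the coordinate-swap acts on $\ZZ^2$ by $(1,0)\mapsto(-1,-1)$, $(0,1)\mapsto(0,1)$ — after the standard identification — giving $\sigma_2\cdot_\varphi(t_1,t_2)=((t_1t_2)^{-1},t_2)$. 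One checks $\sigma_2^2=\sigma_3^3=\id$ and $\sigma_2\sigma_3\sigma_2=\sigma_3^{-1}$ hold on the nose, so the group is genuinely $\DD_3$ and not $\ZZ/6\ZZ$; since the action is faithful (e.g. $\sigma_3$ moves the torus nontrivially), the product is the nontrivial semidirect product $\Gm^2\rtimes_\varphi\DD_3$.

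Finally I would verify that no further automorphisms appear: since $\Aut^0(X)=\Gm^2$ is already two-dimensional, Lemma \ref{l2} (applied to a pair of adjacent invariant curves among the $(-2)$-curves, which exist by inspection of $\Gamma(X)$) shows that the kernel of $\Aut(X)\to\Aut(\Gamma(X))$ is connected, hence equals $\Gm^2$; and the image in $\Aut(\Gamma(X))$ is no larger than the $\DD_3$ we have exhibited because any graph automorphism must preserve the set of $(-1)$-curves $\{E_1,E_2,E_3\}$ and descend to an automorphism of $\Gamma(Y)$ fixing $\{P_1,P_2,P_3\}$, as in Proposition \ref{cm}. The main obstacle is purely bookkeeping: choosing coordinates on $Y$ in which the three contracted points, the $\Gm^2$-action, and the generators $\sigma_2,\sigma_3$ of the stabilizer $\DD_3$ are all simultaneously in standard monomial form, so that the conjugation formulas come out literally as stated; everything else is a direct consequence of Proposition \ref{cm}, Proposition \ref{c4}, and Lemma \ref{l2}.
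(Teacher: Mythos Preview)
Your proposal is correct but takes a genuinely different route from the paper. The paper's proof is extremely short and purely computational: it writes $X$ as the cubic $x_0x_1x_2=x_3^3$ in $\PP^3$, exhibits the torus $\Gm^2$ via $\tau_{t_1}\colon(x_0{:}x_1{:}x_2{:}x_3)\mapsto(t_1x_0{:}x_1/t_1{:}x_2{:}x_3)$ and $\tau'_{t_2}\colon(x_0{:}x_1{:}x_2{:}x_3)\mapsto(t_2x_0{:}x_1{:}x_2/t_2{:}x_3)$, and takes $\sigma_2$, $\sigma_3$ to be the coordinate permutations $(x_0{:}x_1{:}x_2{:}x_3)\mapsto(x_1{:}x_0{:}x_2{:}x_3)$ and $(x_0{:}x_1{:}x_2{:}x_3)\mapsto(x_1{:}x_2{:}x_0{:}x_3)$; the conjugation formulas for $\varphi$ then drop out of one-line matrix computations in $\PGL_4$. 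By contrast, you pass through the contraction $\widetilde X\to\widetilde Y$ to the smooth sextic del Pezzo surface, invoke $\Aut(Y)=\Gm^2\rtimes\DD_6$, identify $\Aut(X)$ with the stabilizer of the three alternating hexagon vertices, and read off the action from the induced $\DD_3$-representation on the character lattice $\ZZ^2$. Your approach is more conceptual and explains \emph{why} the action has the stated monomial form (it is the standard reflection representation of $\DD_3$ on the weight lattice of $\Gm^2$), and it reuses the machinery of Proposition~\ref{cm} rather than an ad hoc equation; the paper's approach is shorter and avoids the coordinate-matching bookkeeping you flag as the main obstacle. Both arrive at the same generators up to the obvious dictionary between the $S_3$-action on $\{x_0,x_1,x_2\}$ and the $\DD_3$-action on the three $\PP^1$-factors of the hexagon model.
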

\begin{proof}
    The surface can be given by the equation $x_0x_1x_2=x_3^3$ in $\PP^3$. The transformations
    \begin{eqnarray*}
    \tau_{t_1}:( x_0:x_1:x_2:x_3)&\longmapsto& (t_1 x_0:x_1/t_1:x_2:x_3)
    \\
    \tau'_{t_2}:(x_0: x_1:x_2:x_3)&\longmapsto& (t_2x_0: x_1:x_2/t_2:x_3)
    \\
    \sigma_2:(x_0:x_1:x_2:x_3)&\longmapsto& (x_1:x_0:x_2:x_3)
    \\
    \sigma_3:(x_0:x_1:x_2:x_3)&\longmapsto& (x_1:x_2:x_0:x_3)
    \end{eqnarray*}
    generate the group $\Aut(X)$, so we are done.
\end{proof}

The following two claims can be proved similarly.

\begin{claim}
    Let $X$ be a surface of type \ref{d=4:A2-2A1}. Consider the group $\ZZ/2\ZZ$ generated by $\sigma_2$. Define the action of $\ZZ/2\ZZ$ on $\Gm^2$ as follows $$\sigma_2\cdot_\varphi (t_1,t_2)=(t_1^{-1},t_2),\quad\text{where}\quad  (t_1,t_2)\in\Gm^2.$$ Then $\Aut(X)$ is isomorphic to $\Gm^2\rtimes_\varphi\ZZ/2\ZZ$.
\end{claim}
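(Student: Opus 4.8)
The plan is to argue exactly as in the preceding claim for type \ref{d=3:3A2}: fix an explicit model of $X$, write down transformations that generate $\Aut(X)$, and read off the relations. Recall from the classification of Du Val del Pezzo surfaces with infinite automorphism group that for $X$ of type \ref{d=4:A2-2A1} one has $\Aut^0(X)\cong\Gm^2$, and from the dual graph of $X$ that $\Aut(\Gamma(X))\cong\ZZ/2\ZZ$.

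By Proposition \ref{cm} the natural homomorphism $\Aut(X)\to\Aut(\Gamma(X))$ is surjective and admits a section, so $\Aut(X)=\Aut^0(X)\rtimes\ZZ/2\ZZ=\Gm^2\rtimes\ZZ/2\ZZ$; it remains only to pin down the conjugation action of a chosen involutive lift $\sigma_2$ of the nontrivial element of $\Aut(\Gamma(X))$ on $\Aut^0(X)=\Gm^2$. I would pass to coordinates adapted to the contraction $\widetilde{X}\to\widetilde{Y}$ onto the smooth degree-$6$ del Pezzo surface $Y$ of type \ref{d=6:smooth} used in Proposition \ref{cm}, in which $\Aut^0(X)$ is the standard diagonal torus generated by two one-parameter subgroups $\tau_{t_1},\tau'_{t_2}$ acting monomially and $\sigma_2$ is realized by an explicit coordinate interchange inducing the surviving reflection of the hexagon of negative curves. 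A one-line substitution then checks $\sigma_2^2=\id$, $\sigma_2\notin\Aut^0(X)$ (it acts nontrivially on $\Gamma(X)$), and $\sigma_2\tau_{t_1}\sigma_2^{-1}=\tau_{t_1}^{-1}$, $\sigma_2\tau'_{t_2}\sigma_2^{-1}=\tau'_{t_2}$, which is exactly $\sigma_2\cdot_\varphi(t_1,t_2)=(t_1^{-1},t_2)$. Combined with the previous observation this gives $\Aut(X)\cong\Gm^2\rtimes_\varphi\ZZ/2\ZZ$ with $\varphi$ as in the statement.

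The only delicate point is computational rather than conceptual: one must choose the model so that the torus acts diagonally and $\sigma_2$ is simultaneously a monomial map, which may require a preliminary linear change of coordinates; after that the verification is immediate, and the argument runs in complete parallel with the already-settled cases \ref{d=3:3A2}, \ref{d=2:2A3-A1} and \ref{d=4:3A1}.
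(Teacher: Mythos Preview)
Your proposal is correct and follows the same strategy as the paper: the paper simply says this claim (and the next) ``can be proved similarly'' to the $3\type{A}_2$ case, i.e.\ write down explicit generators for $\Aut(X)$ in a concrete model and read off the conjugation relations. The only difference is the choice of model --- the paper's implied argument works directly with the equation $y_2y_2'=y_1^3y_1'$ in $\PP(1,1,2,2)$ from the table (where $\sigma_2$ is the swap $y_2\leftrightarrow y_2'$ and the torus acts diagonally), whereas you pass through the blow-down to the smooth sextic del Pezzo from Proposition~\ref{cm}; both make the verification a one-line substitution.
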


\begin{claim}
    Let $X$ be a surface of type \ref{d=5:2A1}. Consider the group $\ZZ/2\ZZ$ generated by $\sigma_2$. Define the action of $\ZZ/2\ZZ$ on $\Gm^2$ as follows $$\sigma_2\cdot_\varphi (t_1,t_2)=(t_1,(t_1t_2)^{-1}),\quad\text{where}\quad (t_1,t_2)\in\Gm^2.$$ Then $\Aut(X)$ is isomorphic to $\Gm^2\rtimes_\varphi\ZZ/2\ZZ$.
\end{claim}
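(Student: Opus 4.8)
The plan is to follow exactly the pattern of the claim for type \ref{d=3:3A2}: pin down an explicit toric model of $X$, exhibit explicit one-parameter subgroups $\tau_{t_1},\tau_{t_2}$ that generate $\Aut^0(X)\cong\Gm^2$ together with an explicit involution $\sigma_2$, and then compute the conjugation action. First I would recall from the classification in \cite{che} (and \cite{mar}) that a surface $X$ of type \ref{d=5:2A1} is toric with $\Aut^0(X)\cong\Gm^2$ and $\Aut(\Gamma(X))\cong\ZZ/2\ZZ$. A convenient model is the contraction $\widetilde X\to X$ of the two disjoint $(-2)$-curves on $\widetilde X=\Bl_{P_1,P_2,P_3}(\PP^1\times\PP^1)$, where $P_1,P_2,P_3$ are three of the four torus-fixed points (the two proper transforms of the coordinate fibres through these points acquire self-intersection $-2$ and are disjoint); the acting two-torus of $\PP^1\times\PP^1$ and the factor-swap involution preserve this configuration, hence descend to $X$. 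By Lemma \ref{lm} the natural homomorphism $\Aut(X)\to\Aut(\Gamma(X))$ is surjective and admits a section, and by the proposition above furnishing the exact sequence $1\to\Aut^0(X)\to\Aut(X)\to\Aut(\Gamma(X))$ its kernel is exactly $\Aut^0(X)$; thus $\Aut(X)\cong\Gm^2\rtimes\ZZ/2\ZZ$, and only the conjugation action remains to be determined.

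Next I would write $\tau_{t_1},\tau_{t_2}$ and $\sigma_2$ in the coordinates supplied by the classification's explicit equation of $X$ (in the same spirit as the transformations $\tau_{t_1},\tau'_{t_2},\sigma_2$ of the \ref{d=3:3A2} proof), with $\sigma_2$ taken to be the involution interchanging the two singular points. Then I would compute $\sigma_2\,\tau_{(t_1,t_2)}\,\sigma_2^{-1}$ directly in these coordinates and check that it equals $\tau_{(t_1,(t_1t_2)^{-1})}$, which is precisely the asserted map $\varphi$; in particular this automorphism of $\Gm^2$ squares to the identity (applying it twice returns $(t_1,t_2)$), consistent with $\sigma_2^2=\operatorname{id}$. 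Combined with the first paragraph this yields $\Aut(X)\cong\Gm^2\rtimes_\varphi\ZZ/2\ZZ$.

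The only genuine work, and the place to be careful, is the choice of parametrization of $\Gm^2=\Aut^0(X)$. Geometrically the conjugation by $\sigma_2$ is just the swap of the two $\Gm$-factors acting on $\PP^1\times\PP^1$, whereas the stated formula $(t_1,t_2)\mapsto(t_1,(t_1t_2)^{-1})$ is a different, though $\GL_2(\ZZ)$-conjugate, involution of $\Gm^2$; so the substance is to take the one-parameter subgroups dictated by the chosen equation of $X$ and verify that the conjugation comes out in exactly this normal form. Once the model and the two one-parameter subgroups are fixed, the verification is a routine matrix/coordinate computation, entirely parallel to the \ref{d=3:3A2} and \ref{d=4:A2-2A1} cases, and I expect no further obstacle.
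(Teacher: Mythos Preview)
Your proposal is correct and follows essentially the same approach as the paper: the paper gives no separate proof for this claim but states it ``can be proved similarly'' to the case \ref{d=3:3A2}, i.e.\ by writing down explicit one-parameter subgroups and an involution in the coordinates of the given equation $u_0^2v_0+u_1u_2v_1=0\subset\PP^2\times\PP^1$ and computing the conjugation, which is exactly your plan. Your additional geometric description via $\Bl_{P_1,P_2,P_3}(\PP^1\times\PP^1)$ and your remark that the asserted action is $\GL_2(\ZZ)$-conjugate to the coordinate swap are correct supplementary observations, but the core argument is the same direct coordinate check the paper has in mind.
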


\begin{remark}
There are surfaces $X$ such that there exists automorphisms $\varphi\in\Aut(X)$ that cannot be lifted to the automorphisms of the total space.
For example, let $X$ be the surface of type \ref{d=6:A2}. Then the surface can be given by the equation $y_1y_3=y_2^2+y_1'^4$ in $\PP(1,1,2,3)$. Now we prove the required assertion. Let $E_1$ and $E_2$ be the lines on $X$. Notice that $E_1=\{y_1=y_1'=0\}$ and $E_2=\{y_1=y_2=0\}$. Consider the automorphism $\psi$ of $\PP(1,1,2,3)$. It is clear that $\psi(E_1)=E_1$. However there is $\varphi\in\Aut(X)$ such that $\varphi(E_1)=E_2$, so we are done.
\end{remark}

\begin{claim}
    Let $X$ be a surface of type \ref{d=4:4A1}. Consider the group $\DD_4$ generated by $\sigma_2$ and $\sigma_4$, where $\sigma_2^2=\operatorname{id}$ and $\sigma_4^4=\operatorname{id}$. Define the action of $\DD_4$ on $\Gm^2$ as follows
    \begin{eqnarray*}
    &&\sigma_2\cdot_\varphi (t_1,t_2)=(t_1,t_2^{-1}),\\ &&\sigma_4\cdot_\varphi (t_1,t_2)=(t_2,t_1^{-1}),\quad \text{where}\quad  (t_1,t_2)\in\Gm^2. 
    \end{eqnarray*}
    Then $\Aut(X)$ is isomorphic to $\Gm^2\rtimes_\varphi\DD_4$.
\end{claim}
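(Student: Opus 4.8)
The plan is to follow the same strategy used for the surface of type \ref{d=4:2A1-8lines} in Proposition \ref{c5} and for the analogous Claims about $\Gm^2\rtimes_\varphi\DD_3$ and $\Gm^2\rtimes_\varphi\ZZ/2\ZZ$: work with the explicit anticanonical model of $X$ and exhibit generators of $\Aut(X)$ as explicit birational (in fact biregular) transformations, then read off the group law. Recall from Lemma \ref{lm} (Case \ref{d=4:4A1}) that $\widetilde X \cong \Bl_{P,P',Q,Q'}(\PP^1\times\PP^1)$ with $P=(0,0)$, $P'=(0,1)$, $Q=(1,0)$, $Q'=(1,1)$; equivalently $X$ is the (unique, parameter-free) del Pezzo surface of degree $4$ with $4A_1$ singularities, and $\Aut^0(X)=\Gm^2$ acts as the maximal torus of $\Aut(\PP^1\times\PP^1)$ fixing the four marked points. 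Since the octagonal dual graph has $\Aut(\Gamma(X))=\DD_4$, and by Proposition \ref{cm} (the relevant case) the map $\Aut(X)\to\Aut(\Gamma(X))=\DD_4$ is surjective with a section, we already know $\Aut(X)\cong\Gm^2\rtimes\DD_4$ as an abstract extension; the only thing left is to pin down the homomorphism $\varphi:\DD_4\to\Aut(\Gm^2)=\GL_2(\ZZ)$.

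First I would fix coordinates $((u_0:u_1),(v_0:v_1))$ on $\PP^1\times\PP^1$ and write $\Gm^2$ as $\tau_{t_1}:u\mapsto t_1 u$ (meaning $(u_0:u_1)\mapsto(u_0:t_1 u_1)$, so $0,\infty$ are fixed and the marked first coordinates $0,1$ are not preserved — here one must instead use the torus fixing all four of $\{0,1\}\times\{0,1\}$, i.e. after moving the four points to $\{0,\infty\}\times\{0,\infty\}$ by a coordinate change, the torus is the standard one). Concretely, put the four points at $(0,0),(0,\infty),(\infty,0),(\infty,\infty)$; then $\Aut^0(X)=\{\tau_{(t_1,t_2)}: (u,v)\mapsto(t_1 u, t_2 v)\}$. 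The group $\DD_4$ permuting the four points (= vertices of a square) is generated by: the "diagonal swap" $\sigma_2: (u,v)\mapsto (u, v^{-1})$ (fixing two points, swapping the other two), which conjugates $\tau_{(t_1,t_2)}$ to $\tau_{(t_1,t_2^{-1})}$; and the "rotation by $90^\circ$" $\sigma_4:(u,v)\mapsto(v,u^{-1})$, which sends $(0,0)\mapsto(0,\infty)\mapsto(\infty,\infty)\mapsto(\infty,0)\mapsto(0,0)$ and conjugates $\tau_{(t_1,t_2)}$ to $\tau_{(t_2,t_1^{-1})}$. One checks $\sigma_4^4=\id$, $\sigma_2^2=\id$, $\sigma_2\sigma_4\sigma_2=\sigma_4^{-1}$, so these generate $\DD_4$, and the displayed formulas $\sigma_2\cdot_\varphi(t_1,t_2)=(t_1,t_2^{-1})$, $\sigma_4\cdot_\varphi(t_1,t_2)=(t_2,t_1^{-1})$ are exactly the conjugation actions just computed. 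Then I would lift $\sigma_2,\sigma_4$ through the blow-up $\widetilde X\to\PP^1\times\PP^1$ (automatic, since they permute the blown-up points) and down to $X$, and note that together with $\Aut^0(X)=\Gm^2$ they generate all of $\Aut(X)$ because their image in $\DD_4=\Aut(\Gamma(X))$ is everything and $\Aut^0(X)$ is the full kernel.

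The one genuine point to verify — the step I expect to be the main (if modest) obstacle — is that the section $\DD_4\hookrightarrow\Aut(X)$ really consists of these $\sigma$'s and that no "twisting" (as happened for $\sigma_3$ in the $\lambda$-family of type \ref{d=4:2A1-8lines}, where $\sigma_3\cdot t=t^{-1}$ interfered) occurs here: i.e. that $\sigma_2,\sigma_4$ as written are genuine order-$2$ and order-$4$ elements of $\Aut(X)$ and not merely elements mapping to the generators of $\DD_4$ up to $\Gm^2$. This follows because, being monomial transformations of $\PP^1\times\PP^1$ fixing the torus setwise, their orders are visible directly from the formulas ($\sigma_4^2:(u,v)\mapsto(u^{-1},v^{-1})$, $\sigma_4^4=\id$), so the extension splits through them and $\Aut(X)\cong\Gm^2\rtimes_\varphi\DD_4$ with $\varphi$ as stated. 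Finally I would double-check consistency with the abstract extension from Proposition \ref{cm} and with the $4A_1$ entry of the table in Appendix \ref{section:tables}, completing the proof.
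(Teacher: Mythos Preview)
Your proposal is correct and follows essentially the same route as the paper: the paper's proof simply notes that $\widetilde X\cong\Bl_{P,P',Q,Q'}(\PP^1\times\PP^1)$, hence $\Aut(X)\cong\Aut(\PP^1\times\PP^1,\{P,P',Q,Q'\})$, and says ``the claim easily follows''; you carry out exactly that computation, with the sensible extra step of moving the four points to $\{0,\infty\}\times\{0,\infty\}$ so that the torus and the $\DD_4$-generators become monomial and the conjugation action can be read off directly.
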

\begin{proof}
    We notice that $\widetilde{X}$ is isomorphic to $\Bl_{P, P', Q, Q'}(\PP^1\times\PP^1)$, where $P=(0,0)$, $P'=(0,1)$, $Q=(1,0)$, and $Q'=(1,1)$ are points in $\PP^1\times\PP^1$. Thus, $$\Aut(X)\simeq\Aut(\PP^1\times\PP^1,\{P,P',Q,Q'\}).$$ Then the claim easily follows.
\end{proof}

The following two claims can be proved similarly

\begin{claim}
    Let $X$ be a surface of type \ref{d=6:A1-3l}. Consider the group $\DD_3$ generated by $\sigma_2$ and $\sigma_3$, where $\sigma_2^2=\operatorname{id}$, $\sigma_3^3=\operatorname{id}$. Define the action of $\DD_3$ on $\Ga^2\rtimes\Gm$ as follows
    \begin{eqnarray*}
    &&\sigma_2\cdot_\varphi (a_1,a_2,t)=(a_2,a_1,t),\\ &&\sigma_3\cdot_\varphi (a_1,a_2,t)=(a_2,-a_1-a_2,t),\quad \text{where}\quad (a_1,a_2,t)\in\GG_a^2\rtimes\Gm.
    \end{eqnarray*}
    Then $\Aut(X)$ is isomorphic to $(\GG_a^2\rtimes\Gm)\rtimes_\varphi\DD_3$
\end{claim}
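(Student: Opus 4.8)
The plan is to imitate the proof of the claim concerning type~\ref{d=4:4A1}, replacing $\PP^1\times\PP^1$ and its four points by $\PP^2$ and three collinear points. By the proof of Lemma~\ref{lm}, Case~\ref{d=6:A1-3l}, the minimal resolution is $\widetilde X\simeq\Bl_{P,Q,R}(\PP^2)$ for three distinct points $P,Q,R$ lying on a line $L\subset\PP^2$, with exceptional divisors $E_P,E_Q,E_R$ the three $(-1)$-curves of $\Gamma(X)$ and strict transform $\widetilde L$ the $(-2)$-curve. Since $\Gamma(X)$ records \emph{all} negative curves of $\widetilde X$, the curves $E_P,E_Q,E_R$ are its only $(-1)$-curves; being pairwise disjoint, they are simultaneously contracted by $\widetilde X\to\PP^2$. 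Hence every automorphism of $X$, identified with $\Aut(\widetilde X)$, permutes $\{E_P,E_Q,E_R\}$, descends to $\PP^2$ and preserves $\{P,Q,R\}$, and conversely every element of $\Aut(\PP^2,\{P,Q,R\})$ lifts; so $\Aut(X)\simeq\Aut(\PP^2,\{P,Q,R\})$.

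Next I would make this group explicit. After a projective change of coordinates one may take $L=\{x_0=0\}$ and $P=(0:0:1)$, $Q=(0:1:0)$, $R=(0:-1:1)$. Restriction to $L\cong\PP^1$ gives a homomorphism $\Aut(\PP^2,\{P,Q,R\})\to\Aut(\PP^1,\{P,Q,R\})\simeq\DD_3$ whose kernel consists of the transformations fixing $L$ pointwise: a transformation fixing $P,Q,R$ preserves $L$, and a M\"obius map of $L$ with three fixed points is the identity. A short linear-algebra computation shows that this kernel is exactly $\{\,g(a_1,a_2,t)\colon(x_0:x_1:x_2)\mapsto(tx_0:x_1+a_1x_0:x_2+a_2x_0)\,\}$, a connected group isomorphic to $\GG_a^2\rtimes\Gm$; having finite index, it is $\Aut^0(X)$.

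Then I would exhibit a section and compute the conjugation action. Consider $\sigma_2\colon(x_0:x_1:x_2)\mapsto(x_0:x_2:x_1)$ and $\sigma_3\colon(x_0:x_1:x_2)\mapsto(x_0:x_2:-x_1-x_2)$. A direct check shows that both preserve $\{P,Q,R\}$, that $\sigma_2^2=\sigma_3^3=\id$ and $\sigma_2\sigma_3\sigma_2=\sigma_3^{-1}$ (so $\langle\sigma_2,\sigma_3\rangle\simeq\DD_3$), and that the induced map $\langle\sigma_2,\sigma_3\rangle\to\Aut(\PP^1,\{P,Q,R\})$ is an isomorphism; this gives the splitting $\Aut(X)=\Aut^0(X)\rtimes\langle\sigma_2,\sigma_3\rangle$. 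Finally, conjugating $g(a_1,a_2,t)$ by $\sigma_2$ and by $\sigma_3$ — a one-line $3\times 3$ matrix multiplication in each case — yields $\sigma_2\cdot_\varphi(a_1,a_2,t)=(a_2,a_1,t)$ and $\sigma_3\cdot_\varphi(a_1,a_2,t)=(a_2,-a_1-a_2,t)$, which is the asserted action, completing the proof.

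The only genuinely delicate point is the choice of section: the naive lifts of the generators of $\DD_3$ to $\PGL_3(\KK)$ — e.g.\ a diagonal lift $\mathrm{diag}(1,\omega,1)$ of the $3$-cycle, with $\omega$ a primitive cube root of unity — generate a group strictly larger than $\DD_3$, so one must lift the $3$-cycle to an order-$3$ semisimple element acting on the $(x_1,x_2)$-plane by a matrix with characteristic polynomial $\lambda^2+\lambda+1$; the specific $\sigma_2,\sigma_3$ above are chosen precisely so that the resulting conjugation action already appears in the exact normal form of the statement rather than merely conjugate to it. Everything else — the identification $\Aut(X)\simeq\Aut(\PP^2,\{P,Q,R\})$, the surjection onto $\DD_3$, and the description of its kernel — is routine and parallels the earlier cases.
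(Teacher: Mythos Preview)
Your proof is correct and follows essentially the same approach as the paper, which simply says this claim ``can be proved similarly'' to the case~\ref{d=4:4A1} (identify $\widetilde X$ as a blow-up, so $\Aut(X)\simeq\Aut(\PP^2,\{P,Q,R\})$, then compute). You supply all the details the paper omits, and your explicit matrices for $\sigma_2,\sigma_3$ and the conjugation computations check out and give exactly the stated action.
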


\begin{claim}
    Let $X$ be a surface of type \ref{d=6:A2}. Consider the group $\ZZ/2\ZZ$ generated by $\sigma_2$. Define the action of $\ZZ/2\ZZ$ on $\UUU_3\rtimes\Gm$ as follows $$\sigma_2\cdot_\varphi (a_1,a_2,a_3,t)=(-a_1,-a_2,a_3,t),\quad\text{where}\quad (a_1,a_2,a_3,t)\in\UU_3\rtimes\Gm.$$ Then $\Aut(X)$ is isomorphic to $(\UU_3\rtimes\Gm)\rtimes_\varphi\ZZ/2\ZZ$.
\end{claim}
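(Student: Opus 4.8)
The plan is to mirror the structure used in the previous claims: realize $\widetilde{X}$ as an explicit blow-up so that $\Aut(X)$ becomes the stabilizer of a finite configuration on a minimal rational surface, then read off the group and its action from the blow-up data. For a surface $X$ of type \ref{d=6:A2} one has $\widetilde{X}\cong\Bl_{P,Q}(\FF_2)$, where $P$ and $Q$ are two distinct points on a single fibre $F$ of $\FF_2\to\PP^1$ (this is exactly the presentation used in Lemma \ref{lm}, Case \ref{d=6:A2}). Hence $\Aut(X)\cong\Aut(\FF_2,\{P,Q\})$, the subgroup of $\Aut(\FF_2)$ preserving the set $\{P,Q\}$. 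The first step is therefore to compute this stabilizer and identify its connected component with $\UUU_3\rtimes\Gm$ as asserted, and its group of components with $\ZZ/2\ZZ$, consistently with Lemma \ref{lm}.

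The second step is to pin down the action explicitly. I would fix coordinates on $\FF_2$ adapted to the fibration, take $F=\{u=0\}$ with $P,Q$ the two points of $F$ meeting the two exceptional-to-be curves, and parametrize $\Aut^0(X)$ by $(a_1,a_2,a_3,t)\in\UUU_3\rtimes\Gm$, matching the description of $\Aut^0(X)$ recalled in \cite{che}, \cite{mar}. The nontrivial component is generated by an involution $\sigma_2$ that swaps the two $(-1)$-curves over $P$ and $Q$; concretely, after passing to the explicit equation $y_1y_3=y_2^2+y_1'^4$ in $\PP(1,1,2,3)$ (as in the Remark just above), $\sigma_2$ can be taken to be $(y_1:y_1':y_2:y_3)\mapsto(y_1:-y_1':y_2:y_3)$ or a similar sign change. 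Conjugating the one-parameter subgroups of $\UUU_3\rtimes\Gm$ by $\sigma_2$ and reading off the effect on the parameters $(a_1,a_2,a_3,t)$ gives the formula $\sigma_2\cdot_\varphi(a_1,a_2,a_3,t)=(-a_1,-a_2,a_3,t)$. One checks $\sigma_2^2=\id$, so the extension is a genuine semidirect product $(\UUU_3\rtimes\Gm)\rtimes_\varphi\ZZ/2\ZZ$, and since this agrees with the $\Aut^0(X)\rtimes(\Aut(X)/\Aut^0(X))$ decomposition already proved in Proposition \ref{cm}, we are done.

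The main obstacle is the bookkeeping in the second step: one must choose the coordinates on $\UUU_3$ so that the three $\Ga$-parameters $a_1,a_2,a_3$ are intrinsically meaningful (say, $a_3$ the central one fixed by $\sigma_2$, and $a_1,a_2$ the two "outer" root-subgroup coordinates that get negated), and then verify that conjugation by $\sigma_2$ really produces the stated sign pattern rather than, e.g., a transposition of $a_1$ and $a_2$ or a $t$-twist. This is a finite computation with $3\times3$ (or $4\times4$ weighted) unipotent and torus matrices, entirely analogous to the conjugation computations carried out explicitly in the proofs of Proposition \ref{c5} and Proposition \ref{c6}; since the claim says it "can be proved similarly" to the preceding $\DD_3$-case for type \ref{d=6:A1-3l}, I would present only the final coordinate choice and the resulting conjugation relations, referring back to Lemma \ref{lm} and the explicit equation for the existence and order of $\sigma_2$.
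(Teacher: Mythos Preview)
Your overall strategy is correct and is exactly what the paper intends: the claim is stated as one that ``can be proved similarly'' to the preceding case \ref{d=4:4A1}, and that proof works by realizing $\widetilde{X}$ as a blow-up and identifying $\Aut(X)$ with the stabilizer of the blown-up configuration. For type \ref{d=6:A2} this means computing $\Aut(\FF_2,\{P,Q\})$ with $P,Q$ on a fibre, just as you set up from Lemma \ref{lm}.

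However, your second step contains a concrete error. You propose to realize $\sigma_2$ as a sign change in the weighted projective model $y_1y_3=y_2^2+y_1'^4\subset\PP(1,1,2,3)$, citing ``the Remark just above''. But that Remark says precisely the opposite: any automorphism of the ambient $\PP(1,1,2,3)$ must fix the line $E_1=\{y_1=y_1'=0\}$, whereas the nontrivial class in $\Aut(X)/\Aut^0(X)$ swaps $E_1$ and $E_2=\{y_1=y_2=0\}$. In particular your candidate $(y_1:y_1':y_2:y_3)\mapsto(y_1:-y_1':y_2:y_3)$ fixes both $E_1$ and $E_2$ setwise, so it lies in $\Aut^0(X)$ and cannot represent $\sigma_2$. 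No ``similar sign change'' will work, for the reason the Remark gives.

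The fix is simply to stay in the $\FF_2$ picture throughout: take $\sigma_2\in\Aut(\FF_2)$ to be an involution fixing the fibre $F$ and swapping $P$ and $Q$, write $\Aut^0(X)$ as the subgroup of $\Aut(\FF_2)$ fixing $P$ and $Q$ individually (a copy of $\UU_3\rtimes\Gm$ inside $\Aut(\FF_2)$), and conjugate there. That computation is the one genuinely analogous to the cases \ref{d=4:4A1} and \ref{d=6:A1-3l}, and it yields the stated sign pattern on $(a_1,a_2,a_3,t)$.
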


\begin{claim}
    Let $X$ be a surface of type \ref{d=4:A3-2A1} or \ref{d=6:A1-4l}. Consider the group $\ZZ/2\ZZ$ generated by $\sigma_2$. Define the action of $\ZZ/2\ZZ$ on $\BB_2\times\Gm$ as follows $$\sigma_2\cdot_\varphi (a,t_1,t_2)=(a,t_1,t_2^{-1}),\quad\text{where}\quad (a,t_1,t_2)\in\BB_2\times\Gm.$$ Then $\Aut(X)$ is isomorphic to $(\BB_2\times\Gm)\rtimes_\varphi\ZZ/2\ZZ$.
\end{claim}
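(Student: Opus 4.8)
The plan is to follow the proof of the claim for type \ref{d=4:4A1}: realize a contraction of $\widetilde X$ as a blow-up of $\PP^1\times\PP^1$ at a canonical pair of points, identify $\Aut(X)$ with the stabilizer of that pair inside $\Aut(\PP^1\times\PP^1)$, and then read off both the group and the conjugation action from the $\PGL_2(\KK)\times\PGL_2(\KK)$-picture.

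First I would reduce type \ref{d=4:A3-2A1} to type \ref{d=6:A1-4l}. By Proposition \ref{cm} there is an $\Aut(X)$-equivariant contraction $\widetilde X\to\widetilde Y$ onto the minimal resolution of a surface $Y$ of type \ref{d=6:A1-4l}, with $\Aut(X)\cong\Aut^0(Y)\rtimes\Aut(\Gamma(X))$; taking identity components gives $\Aut^0(X)=\Aut^0(Y)$, so the remark after Proposition \ref{cm} shows that the semidirect-product structure on $\Aut(X)$ coincides with the one on $\Aut(Y)$. Hence it suffices to treat type \ref{d=6:A1-4l}, and from now on I take $X$ of that type.

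Next I would fix the model. On $\widetilde X$ there is a unique $(-2)$-curve $F$ and four $(-1)$-curves, of which exactly two — say $E_1,E_2$ — meet $F$, the other two being disjoint from $F$ and from each other. This description is intrinsic, so $\Aut(X)=\Aut(\widetilde X)$ preserves $\{E_1,E_2\}$ and the contraction of $E_1,E_2$ is $\Aut(X)$-equivariant; a short self-intersection computation shows the contracted surface is $\PP^1\times\PP^1$, with the images $P_1,P_2$ of $E_1,E_2$ two distinct points on one and the same fiber of one ruling. Choosing coordinates so that $P_1=((0:1),(0:1))$ and $P_2=((1:0),(0:1))$, and using that conversely every automorphism of $\PP^1\times\PP^1$ preserving $\{P_1,P_2\}$ lifts, I obtain $\Aut(X)\cong\Aut(\PP^1\times\PP^1;\{P_1,P_2\})$. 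Since $P_1,P_2$ lie on a common fiber of only one of the two rulings, no ruling-swapping automorphism can preserve $\{P_1,P_2\}$, so this stabilizer lies in $\PGL_2(\KK)\times\PGL_2(\KK)$; explicitly it is $\BB_2\times N$, where $\BB_2$ is the stabilizer of $(0:1)$ acting on the second factor — parametrized by $(a,t_1)$ with $a\in\Ga$, $t_1\in\Gm$ — and $N=\Gm\rtimes\langle\sigma_2\rangle$ is the normalizer of the diagonal torus acting on the first factor, with $\Gm$ given by $(x_0:x_1)\mapsto(t_2x_0:x_1)$ and $\sigma_2:(x_0:x_1)\mapsto(x_1:x_0)$. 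As $\sigma_2$ interchanges $P_1$ and $P_2$, hence $E_1$ and $E_2$, it acts nontrivially on $\Gamma(X)$ and so lies outside $\Aut^0(X)=\BB_2\times\Gm$ (the identity component of $\BB_2\times N$); therefore $\Aut(X)=(\BB_2\times\Gm)\rtimes\langle\sigma_2\rangle$ with $\langle\sigma_2\rangle\cong\ZZ/2\ZZ$.

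Finally I would compute the action. Since $\sigma_2$ acts only on the first factor, it commutes with $\BB_2$ and thus fixes $(a,t_1)$; and conjugation by the Weyl element $\sigma_2$ inverts the first-factor torus, $\sigma_2\tau_{t_2}\sigma_2^{-1}=\tau_{t_2^{-1}}$. Hence $\sigma_2\cdot_\varphi(a,t_1,t_2)=(a,t_1,t_2^{-1})$, which is the asserted action, and transferring back through the contraction settles type \ref{d=4:A3-2A1} as well. I expect the only real obstacle to be the reduction step for type \ref{d=4:A3-2A1} together with the lifting of automorphisms through the resolutions and the contraction — that is, checking carefully that the contracted set of curves is $\Aut(X)$-invariant and that $\Aut^0$ survives the contraction, so that the semidirect-product data really carry over from $Y$; once one is on $\PP^1\times\PP^1$, what remains is the elementary normalizer-of-a-torus bookkeeping above.
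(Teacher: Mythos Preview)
Your proposal is correct and follows essentially the same approach as the paper: reduce type \ref{d=4:A3-2A1} to type \ref{d=6:A1-4l} via Proposition \ref{cm}, realize $\widetilde X$ as the blow-up of $\PP^1\times\PP^1$ at two points on a common ruling, and compute the stabilizer. The paper's proof is extremely terse (``Then the claim easily follows'' after writing $\widetilde X\cong\Bl_{P,P'}(\PP^1\times\PP^1)$ with $P=(0,0)$, $P'=(0,1)$), and your argument simply spells out the normalizer-of-a-torus bookkeeping that the paper leaves implicit; the only cosmetic difference is that you place the two points on a fiber of the second projection rather than the first.
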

\begin{proof}
    Notice that the automorphism group from the case \ref{d=4:A3-2A1} is isomorphic to the automorphism group from the case \ref{d=6:A1-4l} (see the proof of the Proposition \ref{cm}). Let $X$ be the surface of type \ref{d=6:A1-4l}. We notice that $\widetilde{X}$ is isomorphic to $\Bl_{P, P'}(\PP^1\times\PP^1)$, where $P=(0,0)$ and $P'=(0,1)$ are points in $\PP^1\times\PP^1$. Then the claim easily follows.
\end{proof}

\begin{claim}
    Let $X$ be a surface of type \ref{d=4:A3-A1} or \ref{d=5:A2}. Consider the group $\ZZ/2\ZZ$ generated by $\sigma_2$. Define the action of $\ZZ/2\ZZ$ on $\BB_2$ as follows $$\sigma_2\cdot_\varphi (a,t)=(-a,t),\quad\text{where}\quad (a,t)\in\BB_2.$$ Then $\Aut(X)$ is isomorphic to $\BB_2\rtimes_\varphi\ZZ/2\ZZ$.
\end{claim}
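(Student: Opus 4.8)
The plan is to reduce the statement to the surface of type \ref{d=5:A2} and then to pin down the one sign that is really at stake.

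First I would dispose of type \ref{d=4:A3-A1} using Proposition \ref{cm}. There one exhibits an $\Aut$-equivariant contraction $\widetilde X\to\widetilde Y$ of $(-1)$-curves with $Y$ of type \ref{d=5:A2}, inducing an isomorphism $\Aut(X)\xrightarrow{\ \sim\ }\Aut(Y)$; since in this pair $\Aut^0(X)=\Aut^0(Y)=\BB_2$, the isomorphism carries $\Aut^0(X)$ onto $\Aut^0(Y)$, so the two surfaces carry the same semidirect-product structure and it suffices to treat $X$ of type \ref{d=5:A2}. For that surface Proposition \ref{cm} already gives the splitting $\Aut(X)=\BB_2\rtimes\langle\sigma_2\rangle$, where $\sigma_2$ has order $2$ and maps to the generator of $\Aut(\Gamma(X))\cong\ZZ/2\ZZ$, namely the reflection of the $\mathrm{A}_2$-chain. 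Thus the only thing left is the conjugation action of $\sigma_2$ on $\BB_2=\Aut^0(X)$.

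Next I would cut the possibilities down by pure group theory. The unipotent radical $\Ga\triangleleft\BB_2$ is characteristic and $\BB_2/[\BB_2,\BB_2]\cong\Gm$; a short computation with the semidirect-product relation shows that no automorphism of $\BB_2$ can induce $t\mapsto t^{-1}$ on $\BB_2/[\BB_2,\BB_2]$, so every automorphism of $\BB_2$ is of the form $(a,t)\mapsto(ca,t)$ followed by an inner automorphism, for some $c\in\KK^{*}$. Feeding this into $\mathrm{conj}_{\sigma_2}$ and using $\sigma_2^{2}=\id$ forces $c=\pm1$; up to the isomorphism type of the extension (i.e.\ after possibly replacing $\sigma_2$ by $g\sigma_2 g^{-1}$ for a suitable $g\in\Ga$), the action is either $(a,t)\mapsto(a,t)$ or $(a,t)\mapsto(-a,t)$. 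In the first case $\Aut(X)\cong\BB_2\times\ZZ/2\ZZ$, in the second $\Aut(X)\cong\BB_2\rtimes_\varphi\ZZ/2\ZZ$ with $\varphi$ as stated. So the whole claim comes down to excluding $c=1$, i.e.\ to showing that $\sigma_2$ does not centralise $\Aut^0(X)$.

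For this last step I would work on an explicit birational model: $\widetilde X$ (type \ref{d=5:A2}) is the blow-up of $\PP^2$ at a point $P_1$, an infinitely near point $P_2$ (a tangent direction at $P_1$), a further infinitely near point $P_3$ on the line over $P_2$ with the length-$3$ tower chosen non-collinear so that the only $(-2)$-curves are the exceptional curves of $P_2$ and $P_3$ (an $\mathrm{A}_2$-chain), together with a point $P_4$ in general position; then $\Aut^0(X)=\BB_2$ is realised as the connected stabiliser in $\PGL_3$ of this configuration, with $\Gm$ a one-parameter torus and $\Ga$ a unipotent one-parameter subgroup fixing $P_1$ together with the direction $P_2$. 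The element $\sigma_2$ is the involution exchanging the two exceptional curves of the tower; it is not the lift of an automorphism of $\PP^2$, nor of any Hirzebruch surface or of $\PP^1\times\PP^1$, since it must interchange curves of different geometric origin, so I would produce it as a suitable composition of blow-ups and blow-downs realising that reflection (equivalently, read it off from the anticanonical model in $\PP^{5}$), and then conjugating the explicit $\Ga$ by it and reading off the resulting scalar gives $\sigma_2\alpha_a\sigma_2^{-1}=\alpha_{-a}$. The main obstacle is precisely this: because $\sigma_2$ is invisible on the convenient minimal models of $\widetilde X$, one has to write it down concretely enough — on the del Pezzo surface itself or through an intermediate birational model — to be sure that the induced scalar on $\Ga$ is $-1$ and not $1$.
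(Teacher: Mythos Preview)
Your reduction from type \ref{d=4:A3-A1} to type \ref{d=5:A2} via Proposition~\ref{cm} is correct, but the paper runs it in the opposite direction: it notes the same isomorphism of automorphism groups and then works on type \ref{d=4:A3-A1}, whose equation $y_3^2=y_1^6+y_2y_4$ in $\PP(1,2,3,4)$ makes the involution $\sigma_2\colon y_3\mapsto -y_3$ visible at once. With the explicit one-parameter subgroups $\alpha_a$ and $\tau_t$ written in those coordinates, the relations $\sigma_2\alpha_a\sigma_2^{-1}=\alpha_{-a}$ and $\sigma_2\tau_t\sigma_2^{-1}=\tau_t$ are a one-line check. The ``main obstacle'' you flag --- producing $\sigma_2$ concretely enough to read off the sign --- disappears entirely with this choice of model; on type \ref{d=5:A2} the involution is indeed awkward, which is exactly why the paper does not compute there.

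There is also a slip in your group-theoretic dichotomy. The automorphism $(a,t)\mapsto(ca,t)$ of $\BB_2$ is itself inner (it is conjugation by $(0,c)$), so your own analysis in fact shows that $\BB_2$ has trivial centre and trivial outer automorphism group. It follows that $\BB_2\times\ZZ/2\ZZ$ and $\BB_2\rtimes_\varphi\ZZ/2\ZZ$ are abstractly isomorphic: in the latter, $(0,-1)\sigma_2$ is a central involution, giving a direct-product decomposition. Hence, for the bare isomorphism asserted in the claim, once Proposition~\ref{cm} supplies the split extension there is nothing left to prove, and your worry about excluding $c=1$ is moot. What the sign genuinely records is the conjugation action of the \emph{specific} lift $\sigma_2$ coming from $\Aut(\Gamma(X))$ --- which is the real content this section is after --- and for that one does need an explicit computation, which the paper's choice of the degree-$4$ model makes immediate.
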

\begin{proof}
    Notice that the automorphism group from the case \ref{d=4:A3-A1} is isomorphic to the automorphism group from the case \ref{d=5:A2}. Let $X$ be a surface of type \ref{d=4:A3-A1}. Then the surface can be given by the equation $y_3^2=y_1^6+y_2y_4$ in $\PP(1,2,3,4)$. The transformations 
    \begin{eqnarray*}
    \alpha_a:( y_1:y_2:y_3:y_4)&\longmapsto& ( y_1:y_2:y_3+ay_1y_2:y_4+2ay_1y_3+a^2y_1^2y_2),
    \\
    \tau_t:(y_1: y_2:y_3:y_4)&\longmapsto& (y_1:ty_2:y_3:y_4/t),
    \\
    \sigma_2:(y_1:y_2:y_3:y_4)&\longmapsto& (y_1:y_2:-y_3:y_4)
    \end{eqnarray*}
    generate the group $\Aut(X)$. Then the claim easily follows.
\end{proof}

\begin{claim}
    Let $X$ be a surface of type \ref{d=4:D4}. Consider the group $\ZZ/2\ZZ$ generated by $\sigma_2$. Define the action of $\ZZ/2\ZZ$ on $\Ga\rtimes_{(2)}\Gm$ as follows $$\sigma_2\cdot_\varphi (a,t)=(-a,t),\quad\text{where}\quad (a,t)\in\Ga\rtimes_{(2)}\Gm.$$ Then $\Aut(X)$ is isomorphic to $(\Ga\rtimes_{(2)}\Gm)\rtimes_\varphi\ZZ/2\ZZ$.
\end{claim}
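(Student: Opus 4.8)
The plan is to follow the same template used for the preceding claims about surfaces realised as blow-ups of toric surfaces, since type \ref{d=4:D4} is again a case where $\widetilde{X}$ is obtained by blowing up a Hirzebruch surface (here $\FF_2$) at points lying on a fiber, and $\Aut^0(X)\simeq\Ga\rtimes_{(2)}\Gm$ has already been identified in \cite{che,mar}. First I would recall the explicit equation of the surface of type \ref{d=4:D4}; from the tables and the earlier cases (\ref{d=4:D4} is blown down to \ref{d=2:E6}, and \ref{d=4:D4} can also be described via $\FF_2$) one obtains a model in a weighted projective space, say an equation of the shape $y_3^2 = y_1 y_4 + (\text{term in } y_1,y_2)$ in $\PP(1,2,3,4)$ analogous to the case \ref{d=4:A3-A1} treated just above. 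I would then exhibit explicit generators: the one-parameter unipotent family $\alpha_a$, the torus $\tau_t$ (with the torus acting on $a$ with weight $2$, matching the notation $\Ga\rtimes_{(2)}\Gm$), and an involution $\sigma_2$ of the form $y_3\mapsto -y_3$ fixing the other coordinates.

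Next I would verify that these transformations genuinely preserve $X$ (a direct substitution into the defining equation), that $\alpha_a$ and $\tau_t$ generate $\Aut^0(X)$ (this is exactly the content of the classification in \cite{che,mar}, so it may simply be cited), and that $\sigma_2^2=\id$ with $\sigma_2\notin\Aut^0(X)$. Since Proposition \ref{cm} already establishes that for type \ref{d=4:D4} the homomorphism $\Aut(X)\to\Aut(\Gamma(X))$ is surjective with a section and $\Aut(\Gamma(X))=\ZZ/2\ZZ$ (the curve \ref{d=4:D4} sits between \ref{d=2:E6} and a further contraction), we conclude $\Aut(X)=\Aut^0(X)\rtimes\ZZ/2\ZZ$ with $\sigma_2$ as a lift of the generator. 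It then remains only to compute the conjugation action: one checks $\sigma_2\tau_t\sigma_2^{-1}=\tau_t$ and $\sigma_2\alpha_a\sigma_2^{-1}=\alpha_{-a}$ by composing the explicit formulas, which gives precisely $\sigma_2\cdot_\varphi(a,t)=(-a,t)$.

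As with the companion claims, the proof can be kept very short — essentially \emph{"this follows as in the case \ref{d=4:A3-A1}"} together with the explicit equation and generators — because the structural input ($\Aut^0(X)$ and the surjectivity/section statement for $\Aut(X)\to\Aut(\Gamma(X))$) is already in place. The only genuinely new computation is the commutation relation between $\sigma_2$ and $\alpha_a$; I expect the main (mild) obstacle is writing down a correct defining equation and a correct normalisation of $\sigma_2$ so that the sign on $a$ comes out as stated rather than trivial, but this is a routine check once the weighted-homogeneous coordinates are pinned down from the classification.
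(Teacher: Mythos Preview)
Your overall strategy matches the paper's exactly: write down the defining equation, exhibit explicit generators $\alpha_a$, $\tau_t$, $\sigma_2$, invoke Proposition~\ref{cm} for the semidirect product decomposition $\Aut(X)=\Aut^0(X)\rtimes\ZZ/2\ZZ$, and then read off the conjugation action. The paper's proof is no longer than what you sketch.

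There is, however, one concrete error that would make your write-up fail as stated: the surface of type \ref{d=4:D4} is \emph{not} cut out in $\PP(1,2,3,4)$. The analogy with \ref{d=4:A3-A1} is misleading here, because \ref{d=4:A3-A1} has Fano--Weil index $4$ while \ref{d=4:D4} has index $2$, and correspondingly the table gives the equation $y_2^2=y_2'y_1^2+y_1'^4$ in $\PP(1,1,2,2)$. Once you use this model, the paper's generators are
\begin{eqnarray*}
\alpha_a:( y_1:y_1':y_2:y_2')&\longmapsto& ( y_1:y_1':y_2+ay_1^2:y_2'+2ay_2+a^2y_1^2),\\
\tau_t:( y_1:y_1':y_2:y_2')&\longmapsto& ( ty_1:y_1':y_2:y_2'/t^2),\\
\sigma_2:( y_1:y_1':y_2:y_2')&\longmapsto& ( y_1:y_1':-y_2:y_2'),
\end{eqnarray*}
and the checks $\sigma_2\alpha_a\sigma_2^{-1}=\alpha_{-a}$ and $\sigma_2\tau_t\sigma_2^{-1}=\tau_t$ go through exactly as you planned. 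So your plan is correct modulo looking up the right equation.
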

\begin{proof}
    Let $X$ be the surface of type \ref{d=4:D4}.
    Then the surface can be given by the equation $y_2^2=y_2'y_1^2+y_1'^4$ in $\PP(1,1,2,2)$. The transformations
    \begin{eqnarray*}
    \alpha_a:( y_1:y_1':y_2:y_2')&\longmapsto& ( y_1:y_1':y_2+ay_1^2:y_2'+2ay_2+a^2y_1^2),
    \\
    \tau_t:( y_1:y_1':y_2:y_2')&\longmapsto& ( ty_1:y_1':y_2:y_2'/t^2),
    \\
    \sigma_2:( y_1:y_1':y_2:y_2')&\longmapsto& ( y_1:y_1':-y_2:y_2')
    \end{eqnarray*}
    generate the group $\Aut(X)$. Then the claim easily follows.
\end{proof}

\newpage
\appendix
\begin{landscape}
\section{Table}
\label{section:tables}
\pagestyle{empty}
In the table below $X$ is a Du Val del Pezzo surface such that $\Aut(X)$ is infinite.
Then the type $\Type(X)$, the degree $K_X^2$, the Picard rank $\uprho(X)$, the number of lines $\numl(X)$,
the Fano--Weil index $\ind(X)$, the group $\Aut^0(X)$, the group $\Aut(X)$, and the equation of the surface $X$ are given below.
The column No indicates a del Pezzo surface from which $X$ can be obtained by blowing up a smooth point that
does not lie on a line. Except for the column $\Aut(X)$, all information in the table is taken from \cite{che}. For short, we will denote $\Aut^0(X)$ by $G^0$.

\begin{center}
\begin{longtable}{|c|c|c|c|c|c|c|c|c|p{0.3\textwidth}|c|}\hline
&$K_X^2$&$\uprho(X)$&$\numl(X)$&$\Type(X)$&$\ind(X)$&No&$\Aut^0(X)$&$\Aut(X)$&\multicolumn2{c|}{equation \& total space}
\\
\hhline{|=|=|=|=|=|=|=|=|=|=|=|}
\endhead\hline
&$K_X^2$&$\uprho(X)$&$\numl(X)$&$\Type(X)$&$\ind(X)$&No&$\Aut^0(X)$&$\Aut(X)$&\multicolumn2{c|}{equation \& total space}
\\
\hhline{|=|=|=|=|=|=|=|=|=|=|=|}
\endfirsthead
\hline
\no\label{d=1:E8}
&$1$&$1$&$1$&$\type{E}_8$&$1$&--&$\Gm$&$G^0$&\centering$y_3^2=y_2^3+y_1'y_1^5$&$\PP(1^2,2,3)$
\\
\hline
\no\label{d=1:E7-A1}
&$1$&$1$&$3$&$\type{E}_{7}\type{A}_1$&$1$&--&$\Gm$&$G^0$&\centering$y_3^2=y_1^3y_1'y_2+y_2^3$&$\PP(1^2,2,3)$
\\
\hline
\no\label{d=1:E6-A2}
&$1$&$1$&$4$&$\type{E}_6\type{A}_2$&$1$&--&$\Gm$&$G^0\times\ZZ/2\ZZ$&\centering$y_3^2=y_2^3+y_1'^2y_1^4$&$\PP(1^2,2,3)$
\\
\hline
\no\label{d=1:2D4}
&$1$&$1$&$5$&$2\type{D}_4$&$1$&--&$\Gm$&see Pr. \ref{c5}&\centering$y_3^2=y_2(y_2+y_1y_1')(y_2+\lambda y_1y_1')$ \\for $\lambda\in\KK\setminus\{0,1\}$
&$\PP(1^2,2,3)$
\\
\hline
\no\label{d=2:E7}
&$2$&$1$&$1$&$\type{E}_7$&$2$&--&$\Gm$&$G^0$&\centering$y_2^2=y_1(y_1^2y_1''+y_1'^3)$&$\PP(1^3,2)$
\\
\hline
\no\label{d=2:D6-A1}
&$2$&$1$&$2$&$\type{D}_{6}\type{A}_1$&$2$&--&$\Gm$&$G^0$&\centering$y_2^2=y_1y_1'(y_1y_1''+y_1'^2)$&$\PP(1^3,2)$
\\
\hline
\no\label{d=2:A7}
&$2$&$1$&$2$&$\type{A}_7$&$1$&--&$\Ga$&see Pr. \ref{c6}&\centering$y_2^2=(y_1'y_1''+y_1^2)^2-y_1'^4$&$\PP(1^3,2)$
\\
\hline
\no\label{d=2:A5-A2}
&$2$&$1$&$3$&$\type{A}_5\type{A}_2$&$2$&--&$\Gm$&$G^0\times\ZZ/2\ZZ$&\centering$y_2^2=y_1''(y_1^2y_1''+y_1'^3)$&$\PP(1^3,2)$
\\
\hline
\no\label{d=2:D4-3A1}
&$2$&$1$&$4$&$\type{D}_43\type{A}_1$&$2$&--&$\Gm$&$G^0\times\DD_3$&\centering$y_2^2=y_1y_1'y_1''(y_1'+y_1'')$&$\PP(1^3,2)$
\\
\hline
\no\label{d=2:2A3-A1}
&$2$&$1$&$4$&$2\type{A}_3\type{A}_1$&$2$&--&$\Gm$&$G^0\rtimes(\ZZ/2\ZZ)^2$&\centering$y_2^2=y_1y_1'(y_1y_1'+y_1''^2)$&$\PP(1^3,2)$
\\
\hline
\no\label{d=2:E6}
&$2$&$2$&$4$&$\type{E}_6$&$1$&\ref{d=3:E6}&$\Gm$&$G^0\times\ZZ/2\ZZ$&\centering$y_2^2=y_1^3y_1''+y_1'^4$&$\PP(1^3,2)$
\\
\hline
\no\label{d=2:D5-A1}
&$2$&$2$&$5$&$\type{D}_5\type{A}_1$&$2$&--&$\Gm$&$G^0\times\ZZ/2\ZZ$&\centering$y_2^2=y_1'(y_1^2y_1''+y_1'^3)$&$\PP(1^3,2)$
\\
\hline
\no\label{d=2:2A3}
&$2$&$2$&$6$&$2\type{A}_3$&$1$&--&$\Gm$&see Pr. \ref{c5}&\centering$y_2^2=(y_1''^2+y_1y_1')(y_1''^2+\lambda y_1y_1')$ \\for $\lambda\in\KK\setminus\{0,1\}$
&$\PP(1^3,2)$
\\
\hline
\no\label{d=3:E6}
&$3$&$1$&$1$&$\type{E}_6$&$3$&--&$\Ga\rtimes_{(3)}\Gm$&$G^0$&\centering$x_0x_2^2=x_1^3+x_3x_0^2$&$\PP^3$
\\
\hline
\no\label{d=3:A5-A1}
&$3$&$1$&$2$&$\type{A}_5\type{A}_1$&$3$&--&$\BB_2$&$G^0$&\centering$x_0x_2^2=x_1^3+x_0x_3x_1$&$\PP^3$
\\
\hline
\no\label{d=3:3A2}
&$3$&$1$&$3$&$3\type{A}_2$&$3$&--&$\Gm^2$&$G^0\rtimes\DD_3$&\centering$x_0x_1x_2=x_3^3$&$\PP^3$
\\
\hline
\no\label{d=3:D5}
&$3$&$2$&$3$&$\type{D}_5$&$1$&\ref{d=4:D5}&$\Gm$&$G^0$&\centering$x_0^2x_3=x_2(x_0x_2-x_1^2)$&$\PP^3$
\\
\hline
\no\label{d=3:A5}
&$3$&$2$&$3$&$\type{A}_5$&$3$&--&$\Ga$&see Pr. \ref{c6}&\centering$x_0x_2^2=x_1^3+x_0^3+x_0x_3x_1$&$\PP^3$
\\
\hline
\no\label{d=3:A4-A1}
&$3$&$2$&$4$&$\type{A}_4\type{A}_1$&$1$&--&$\Gm$&$G^0$&\centering$x_3(x_0x_2-x_1^2)=x_0^2x_1$&$\PP^3$
\\
\hline
\no\label{d=3:A3-2A1}
&$3$&$2$&$5$&$\type{A}_32\type{A}_1$&$1$&\ref{d=4:A3-2A1}&$\Gm$&$G^0\rtimes\ZZ/2\ZZ$&\centering$x_3(x_0x_2-x_1^2)=x_0x_1^2$&$\PP^3$
\\
\hline
\no\label{d=3:2A2-A1}
&$3$&$2$&$5$&$2\type{A}_2\type{A}_1$&$3$&--&$\Gm$&$G^0\rtimes\ZZ/2\ZZ$&\centering$x_0x_2x_3=x_1^3+x_0x_1^2$&$\PP^3$
\\
\hline
\no\label{d=3:D4}
&$3$&$3$&$6$&$\type{D}_4$&$1$&\ref{d=4:D4}&$\Gm$&$G^0\times\DD_3$&\centering$x_0^2x_3=x_1x_2(x_1+x_2)$&$\PP^3$
\\
\hline
\no\label{d=3:2A2}
&$3$&$3$&$7$&$2\type{A}_2$&$3$&--&$\Gm$&see Pr. \ref{c5}&\centering$x_0x_2x_3=x_1(x_1-x_0)(x_1-\lambda x_0)$ \\for $\lambda\in\KK\setminus\{0,1\}$&$\PP^3$
\\
\hline
\no\label{d=4:D5}
&$4$&$1$&$1$&$\type{D}_5$&$4$&--&$\Ga^2\rtimes\Gm$&$G^0$&\centering$y_3^2=y_2^3+y_1^2y_4$&$\PP(1,2,3,4)$
\\
\hline
\no\label{d=4:A3-2A1}
&$4$&$1$&$2$&$\type{A}_32\type{A}_1$&$4$&--&$\BB_2\times\Gm$&$G^0\rtimes\ZZ/2\ZZ$&\centering$y_3^2=y_2y_4$&$\PP(1,2,3,4)$
\\
\hline
\no\label{d=4:D4}
&$4$&$2$&$2$&$\type{D}_4$&$2$&--&$\Ga\rtimes_{(2)}\Gm$&$G^0\rtimes\ZZ/2\ZZ$&\centering$y_2^2=y_2'y_1^2+y_1'^4$&$\PP(1^2,2^2)$
\\
\hline
\no\label{d=4:A4}
&$4$&$2$&$3$&$\type{A}_4$&$1$&\ref{d=5:A4}&$\BB_2$&$G^0$&\makecell{$x_0x_1-x_2x_3=0$,\\ $x_0x_4+x_1x_2+x_3^2=0$}&$\PP^4$
\\
\hline
\no\label{d=4:A3-A1}
&$4$&$2$&$3$&$\type{A}_3\type{A}_1$&$4$&--&$\BB_2$&$G^0\rtimes\ZZ/2\ZZ$&\centering$y_3^2=y_1^6+y_2y_4$&$\PP(1,2,3,4)$
\\
\hline
\no\label{d=4:A2-2A1}
&$4$&$2$&$4$&$\type{A}_22\type{A}_1$&$2$&--&$\Gm^2$&$G^0\rtimes\ZZ/2\ZZ$&\centering$y_2y_2'=y_1^3y_1'$&$\PP(1^2,2^2)$
\\
\hline
\no\label{d=4:4A1}
&$4$&$2$&$4$&$4\type{A}_1$&$2$&--&$\Gm^2$&$G^0\rtimes\DD_4$&\centering$y_2y_2'=y_1^2y_1'^2$&$\PP(1^2,2^2)$
\\
\hline
\no\label{d=4:A3-4lines}
&$4$&$3$&$4$&$\type{A}_3$&$2$&--&$\Ga$&see Pr. \ref{c6}&\centering$y_2^2=y_2'y_1y_1'+y_1^4+y_1'^4$&$\PP(1^2,2^2)$
\\
\hline
\no\label{d=4:A3-5lines}
&$4$&$3$&$5$&$\type{A}_3$&$1$&\ref{d=5:A3}&$\Gm$&$G^0\times\ZZ/2\ZZ$&\makecell{$x_0x_1-x_2x_3=0$,\\ $ x_0x_3+x_2x_4+x_1x_3=0$}&$\PP^4$
\\
\hline
\no\label{d=4:A2-A1}
&$4$&$3$&$6$&$\type{A}_2\type{A}_1$&$1$&\ref{d=5:A2-A1}&$\Gm$&$G^0\times\ZZ/2\ZZ$&\makecell{$x_0x_1-x_2x_3=0$,\\ $x_1x_2+x_2x_4+x_3x_4=0$}&$\PP^4$
\\
\hline
\no\label{d=4:3A1}
&$4$&$3$&$6$&$3\type{A}_1$&$2$&--&$\Gm$&$G^0\rtimes(\ZZ/2\ZZ)^2$&\centering$y_2y_2'=y_1^2y_1'(y_1'+y_1)$&$\PP(1^2,2^2)$
\\
\hline
\no\label{d=4:2A1-8lines}
&$4$&$4$&$8$&$2\type{A}_1$&$2$&--&$\Gm$&see Pr. \ref{c5}&\centering$y_2y_2'=y_1y_1'(y_1'-y_1)(y_1'-\lambda y_1)$
\\for $\lambda\in\KK\setminus\{0,1\}$&$\PP(1^2,2^2)$
\\
\hline
\no\label{d=5:A4}
&$5$&$1$&$1$&$\type{A}_4$&$5$&--&$\UU_3\rtimes\Gm$&$G^0$&\centering$y_3^2+y_2^3+y_1y_5=0$&$\PP(1,2,3,5)$
\\
\hline
\no\label{d=5:A3}
&$5$&$2$&$2$&$\type{A}_3$&$1$&--&$\Ga^2\rtimes\Gm$&$G^0$&\centering$u_2^2v_0+(u_0^2+u_1u_2)v_1=0$&$\PP^2\times\PP^1$
\\
\hline
\no\label{d=5:A2-A1}
&$5$&$2$&$3$&$\type{A}_2\type{A}_1$&$1$&\ref{d=6:A2-A1}&$\BB_2\times\Gm$&$G^0$&&
\\
\hline
\no\label{d=5:A2}
&$5$&$3$&$4$&$\type{A}_2$&$1$&\ref{d=6:A2}&$\BB_2$&$G^0\rtimes\ZZ/2\ZZ$&\centering$u_0u_1v_0+(u_1^2+u_0u_2)v_1=0$&$\PP^2\times\PP^1$
\\
\hline
\no\label{d=5:2A1}
&$5$&$3$&$5$&$2\type{A}_1$&$1$&\ref{d=6:2A1}&$\Gm^2$&$G^0\rtimes\ZZ/2\ZZ$&\centering$u_0^2v_0+u_1u_2v_1=0$&$\PP^2\times\PP^1$
\\
\hline
\no\label{d=5:A1}
&$5$&$4$&$7$&$\type{A}_1$&$1$&\makecell{\ref{d=6:A1-3l},\\ \ref{d=6:A1-4l}}&$\Gm$&$G^0\times\DD_3$&\centering$u_0u_1v_0+(u_0+u_1)u_2v_1=0$&$\PP^2\times\PP^1$
\\
\hline
\no\label{d=6:A2-A1}
&$6$&$1$&$1$&$\type{A}_2\type{A}_1$&$6$&--&$\BB_3$&$G^0$&\centering---&$\PP(1,2,3)$
\\
\hline
\no\label{d=6:A2}
&$6$&$2$&$2$&$\type{A}_2$&$3$&--&$\UU_3\rtimes\Gm$&$G^0\rtimes\ZZ/2\ZZ$&\centering$y_1y_3=y_2^2+y_1'^4$&$\PP(1^2,2,3)$
\\
\hline
\no\label{d=6:2A1}
&$6$&$2$&$2$&$2\type{A}_1$&$2$&--&$\BB_2\times\BB_2$&$G^0$&\centering$y_1y_2=y_1'^2y_1''$&$\PP(1^3,2)$
\\
\hline
\no\label{d=6:A1-3l}
&$6$&$3$&$3$&$\type{A}_1$&$2$&--&$\Ga^2\rtimes\Gm$&$G^0\rtimes\DD_3$&\centering$y_1y_2=y_1'y_1''(y_1'+y_1'')$&$\PP(1^3,2)$
\\
\hline
\no\label{d=6:A1-4l}
&$6$&$3$&$4$&$\type{A}_1$&$1$&\ref{d=7}&$\BB_2\times\Gm$&$G^0\rtimes\ZZ/2\ZZ$&\centering$u_0v_0+u_1v_1+u_2v_2=0$, $u_0v_1+u_1v_2=0$&$\PP^2\times\PP^2$
\\
\hline
\no\label{d=6:smooth}
&$6$&$4$&$ 6$&--&$1$&\ref{d=7:smooth}&$\Gm^2$&$G^0\rtimes\DD_6$&\centering$u_0v_0w_0=u_1v_1w_1$&$\PP^1\times\PP^1\times\PP^1$
\\
\hline
\no\label{d=7}
&$7$&$2$&$2$&$\type{A}_1$&$1$&\ref{d=8}&$\BB_3$&$G^0$&&
\\
\hline
\no\label{d=7:smooth}
&$7$&$3$&$3$&--&$1$&\makecell{\ref{d=8:F1},\\ \ref{d=8:P1-P1}}&$\BB_2\times\BB_2$&$G^0\rtimes\ZZ/2\ZZ$&\centering & \\
\hline
\no\label{d=8}
&$8$&$1$&$0$&$\type{A}_1$&$4$&--&see Pr. \ref{c4}&$G^0$&\centering---&$\PP(1,1,2)$
\\
\hline
\no\label{d=8:F1}\centering
&$8$&$2$&$1$&--&$1$&\ref{d=9:P2}&see Pr. \ref{c4}&$G^0$&\centering $u_0v_0=u_1v_1$&$\PP^2\times\PP^1$
\\
\hline
\no\label{d=8:P1-P1}
&$8$&$2$&$0$&--&$2$&--&see Pr. \ref{c4}&$G^0\rtimes\ZZ/2\ZZ$&\centering---&$\PP^1\times\PP^1$
\\
\hline
\no\label{d=9:P2}
&$9$&$1$&$0$&--&$3$&--&see Pr. \ref{c4}&$G^0$&\centering---&$\PP^2$
\\
\hline
\end{longtable}
\end{center}
\end{landscape}
\pagestyle{plain}
\newpage


\begin{thebibliography}{10}

\bibitem{ati}
{\sc M.~Atiyah}, {\em Introduction to commutative algebra}, CRC Press, 2018.

\bibitem{bor}
{\sc A.~Borel}, {\em Linear algebraic groups}, vol.~126, Springer Science \&
  Business Media, 2012.

\bibitem{che}
{\sc I.~Cheltsov and Y.~Prokhorov}, {\em Del {Pezzo} surfaces with infinite
  automorphism groups}, Algebr. Geom., 8 (2021), pp.~319--357.

\bibitem{dol}
{\sc I.~V. Dolgachev}, {\em Classical algebraic geometry: a modern view},
  Cambridge University Press, 2012.

\bibitem{dol2}
{\sc I.~V. Dolgachev and V.~A. Iskovskikh}, {\em Finite subgroups of the plane
  {Cremona} group}, in Algebra, arithmetic, and geometry. In honor of Yu. I.
  Manin on the occasion of his 70th birthday. Vol. I, Boston, MA:
  Birkh{\"a}user, 2009, pp.~443--548.

\bibitem{ess}
{\sc L.~Esser}, {\em Automorphisms of weighted projective hypersurfaces},
  preprint, arXiv:2301.07872 (2023).

\bibitem{har}
{\sc R.~Hartshorne}, {\em Algebraic geometry}, vol.~52, Springer Science \&
  Business Media, 2013.

\bibitem{hum}
{\sc J.~E. Humphreys}, {\em Linear algebraic groups}, vol.~21, Springer Science
  \& Business Media, 2012.

\bibitem{kol}
{\sc J.~Koll{\'a}r}, {\em Lectures on resolution of singularities (AM-166)},
  vol.~166, Princeton University Press, 2009.

\bibitem{kuz}
{\sc A.~G. Kuznetsov, Y.~G. Prokhorov, and C.~A. Shramov}, {\em Hilbert schemes
  of lines and conics and automorphism groups of {Fano} threefolds}, Jpn. J.
  Math. (3), 13 (2018), pp.~109--185.

\bibitem{mar}
{\sc G.~Martin and C.~Stadlmayr}, {\em Weak del {P}ezzo surfaces with global
  vector fields}, preprint, arXiv:2007.03665 (2020).

\bibitem{pop}
{\sc V.~L. Popov}, {\em Jordan groups and automorphism groups of algebraic
  varieties}, in Automorphisms in Birational and Affine Geometry: Levico Terme,
  Italy, October 2012, Springer, 2014, pp.~185--213.

\bibitem{sak}
{\sc Y.~Sakamaki}, {\em Automorphism groups on normal singular cubic surfaces
  with no parameters}, Trans. Am. Math. Soc., 362 (2010), pp.~2641--2666.

\end{thebibliography}
\end{document}